\providecommand{\blue}[1]{\color{black}{#1}\color{black}\hspace{0pt}}
\providecommand{\bblue}[1]{\color{black}{#1}\color{black}\hspace{0pt}}
\providecommand{\black}[1]{#1}
\newtheorem{theorem}{Theorem}[section]
\newtheorem{corollary}[theorem]{Corollary}
\newtheorem{proposition}[theorem]{Proposition}
\newtheorem{define}[theorem]{Definition}
\newtheorem{definition}[theorem]{Definition}
\DeclareMathOperator*{\col}{col}
\DeclareMathOperator*{\row}{row}
\DeclareMathOperator*{\diag}{diag}
\DeclareMathOperator*{\T}{\intercal}
\DeclareMathOperator*{\He}{Sym}
\DeclareMathOperator*{\trace}{trace}
\DeclareMathOperator{\eps}{\varepsilon}
\def\T{\mathrm{{\scriptstyle T}}}
\DeclareMathOperator{\vect}{\mathrm{vec}}
\def\Tmin{T_{\textnormal{min}}}
\def\Tmax{T_{\textnormal{max}}}
\def\lmin{\lambda_{\textnormal{min}}}
\def\lmax{\lambda_{\textnormal{max}}}
\def\sp{\ensuremath{\mathrm{sp}}}
\def\dx{\mathrm{d}x}
\def\d{\mathrm{d}}
\def\dt{\mathrm{d}t}
\def\ds{\mathrm{d}s}
\def\dW{\mathrm{d}W}
\newcommand{\pd}{\ensuremath\mathbb{S}_{\succ0}}
\newcommand{\psd}{\ensuremath\mathbb{S}_{\succeq0}}
\newcommand{\sm}{\ensuremath\mathbb{S}}
\newcommand{\Rnn}{\ensuremath\mathbb{R}_{\ge0}}
\newcommand{\R}{\ensuremath\mathbb{R}}
\newcommand{\Znn}{\ensuremath\mathbb{Z}_{\ge0}}
\def\P{\mathbb{P}}
\def\R{\mathbb{R}}
\def\E{\mathbb{E}}
\def\d{\textnormal{d}}
\def\dW{\textnormal{d}W}
\def\cite{\citep}
\newenvironment{proof}{{\it Proof :~}}{\hfill$\diamondsuit$\\}
\title{Stability Analysis and Stabilization of Linear Symmetric Matrix-Valued Continuous, Discrete, and Impulsive Dynamical Systems\\ --\\ A Unified Approach for the Stability Analysis and the Stabilization of Linear Systems}
\author{Corentin Briat\footnote{email: {\tt  corentin@briat.info}; url:~{\tt http://www.briat.info}; ORCID Number: 0000-0003-1822-2683}\\Independent Researcher\\ Basel, Switzerland}
\begin{document}

\begin{frontmatter}

\begin{abstract}
Symmetric matrix-valued dynamical systems are an important class of systems that can describe important processes such as covariance/second-order moment processes, or processes on manifolds and Lie Groups. We address here the case of processes that leave the cone of positive semidefinite matrices invariant, thereby including covariance and second-order moment processes. Both the continuous-time and the discrete-time cases are first considered. In the LTV case, the obtained stability and stabilization conditions are expressed as differential and difference Lyapunov conditions which are equivalent, in the LTI case, to some spectral conditions for the generators of the processes. Convex stabilization conditions are also obtained in both the continuous-time and the discrete-time setting. It is proven that systems with constant delays are stable provided that the systems with zero-delays are stable -- which mirrors existing results for linear positive systems. The results are then extended and unified into a impulsive formulation for which similar results are obtained. The proposed framework is very general and can recover and/or extend almost all the results on the literature of linear systems related to (mean-square) exponential (uniform) stability. Several examples are discussed to illustrate this claim by deriving stability conditions for stochastic systems driven by Brownian motion and Poissonian jumps, Markov jump systems, (stochastic) switched systems, (stochastic) impulsive systems, (stochastic) sampled-data systems, and all their possible combinations.
\end{abstract}
\end{frontmatter}

\begin{keyword}
Matrix-valued dynamical systems; Lyapunov methods; stochastic processes; stabilization; LMIs
\end{keyword}

\section{Introduction}

Matrix-valued dynamical systems form an important class of systems that can be used to represent several processes such as the evolution of covariances or second-order moments \citep{SkeltonIG:97a}, the dynamics of systems on manifolds such as SO(3) \citep{Abraham:88,Bullo:05} or Lie groups \citep{Jurdjevic:72}. In particular, the class of matrix-valued dynamical systems that leave the cone of positive semidefinite matrices invariant is of special interest because it can represent the dynamics of covariance \black{or seconder-order moment} matrices associated with stochastic dynamical systems. The consideration of the so-called second-order information is not new and has been considered in the past in \citep{SkeltonIG:97a}. More recent works such as \citep{You:15,Gattami:16} also consider second-order information methods for providing new ways to solve existing problems related to $H_\infty$ analysis. \bblue{Finally, this approach is also widely used in the analysis and the control of stochastic systems; see e.g. \citep{Costa:05,Antunes:10,Costa:13,Dragan:02,Dragan:04,Dragan:05,Dragan:06,Dragan:10,Dragan:13,Dragan:20,Dragan:21,Dragan:22}.}

\black{The objective of this paper is to provide a clear analysis of a very general class of matrix-valued linear dynamical systems evolving on the cone of positive semidefinite matrices as a unifying way for analyzing the stability of linear systems, including deterministic or stochastic, continuous, discrete or hybrid systems, without requiring any deep knowledge in stochastic processes or stochastic calculus. Indeed, the analysis of all those classes of systems can be brought back to the analysis of an associated matrix-valued differential or difference equations, or a combination of them in the hybrid case, which describe the evolution of the second-order moment associated with the state of the system. In fact, a very simple procedure can be applied to build those matrix-valued dynamical systems from the original system.}

\black{Another interesting point is that cone-preserving systems have been shown to admit simpler characterizations for their properties \citep{Bundfuss:09,Tanaka:13b,Chen:18}. The simplest class of cone preserving systems is that of linear positive systems \citep{Farina:00} and are known to admit linear programming conditions for their stability using linear copositive Lyapunov functions and for the characterization of the $L_1$- and $L_\infty$-gains which both coincide with the $1$- and $\infty$-norm of the DC-matrix of the system, which is obtained by evaluating the transfer function of the system at $s=0$; see e.g. \citep{Briat:11g,Briat:11h,Ebihara:11}. Analogously, the $H_\infty$-norm of such systems coincides with the $2$-norm of the DC-matrix \citep{Rantzer:12}. Those results notably had simplifying implications in the analysis of stability of power control in wireless networks in \citep{Qian:17a,Qian:17b} but also in the analysis and the scalable control of compartmental systems \citep{Rantzer:21}, or the analysis and the control of biological systems \cite{Briat:13i,Briat:15e,Briat:19:DelayedRN,Briat:20:Structural}, etc. The design of constrained or structured controllers is also convex for those systems \citep{Aitrami:07}. Linear positive systems with delays are stable provided that their zero-delay counterparts are also stable \citep{Haddad:04,AitRami:09,Briat:11h,Shen:14,Shen:15,Shen:15b,Briat:16b}. Many extensions of those results as well as new ones have been provided; see e.g. \citep{Khong:16,Rantzer:16}.  As the systems considered in this paper can be seen as matrix analogues of linear positive systems, or more generally, cone preserving systems, there is hope that such systems share similar and simpler conditions characterizing some of their properties such as stability and performance. It is notably shown in this paper, that this is the case for systems with discrete delays as also discussed in \citep{Tanaka:13b}. In fact, many of the obtained results for linear positive systems with delays can be extended to the current class systems by suitably adapting the systems themselves and considering the correct Lyapunov-Krasovskii functionals. This will not be considered here as this is not the main topic of the paper but will be addressed in future works focusing on the analysis of delay systems and on the development of a dissipativity theory for such systems.}

\black{The first step towards a possible full answer to whether matrix-valued cone preserving systems also admit simple characterization of their properties is the development of a suitable and well-rounded stability theory for them. In this paper, we use the fact that the stability analysis of linear symmetric matrix-valued continuous- and  discrete-time systems can be analyzed using linear copositive Lyapunov functions that take nonnegative values for all state values in the cone of positive semidefinite matrices; we refer those functions as linear $\psd^n$-copositive Lyapunov functions to clearly state on which cone the Lyapunov function is positive definite. In fact, those Lyapunov functions can be all characterized in terms of the interior of the dual cone, the latter coinciding with the cone of positive semidefinite matrices as this cone is self-dual. The main tools are the inner-product of matrices and the concepts of operators adjoint to those describing the dynamics of the matrix-valued dynamical systems. This general formulation allows for very simple and natural proofs for all the main results of the paper. }

\black{In the continuous-time case, both LTV and LTI systems are considered and their stability is characterized in terms of a Lyapunov differential equation and a Lyapunov equation, respectively. In the LTI case, the stability conditions are also equivalent to a condition on the eigenvalues of the operator describing the dynamics of the system. A stabilization result is also derived and takes the form of a convex differential linear matrix inequality problem. An extension of the results pertaining to a certain class of time-delay systems is also considered and extends the results in  \citep{Tanaka:13b}, while at the same time providing a new proof for it. We then show how those results can be used to retrieve some results important of the literature. The first example is that of an LTV system driven by both Brownian motions and Poissonian jumps and can be seen as a particular case of the systems considered in \cite{Antunes:09}. The second example, which extends \citep{Tanaka:13b}, pertains to the analysis of a class of diffusion processes with delays and the main result here is that this system is exponentially stable if and only if the system with zero delays is exponentially stable. The third example addresses the stability of linear time-varying continuous-time Markov jump systems considered e.g. in \citep{Kushner:67,Khasminskii:12,Costa:13} in the LTI case. We can recover those results in only few lines of calculations using the results of the paper. The last example is that of sampled-data system with Poissonian sampling that addresses, in a slightly different way, the problem considered in \citep{Verriest:09c}.}

\begin{figure}[H]
  \centering
  \includegraphics[width=0.99\textwidth]{./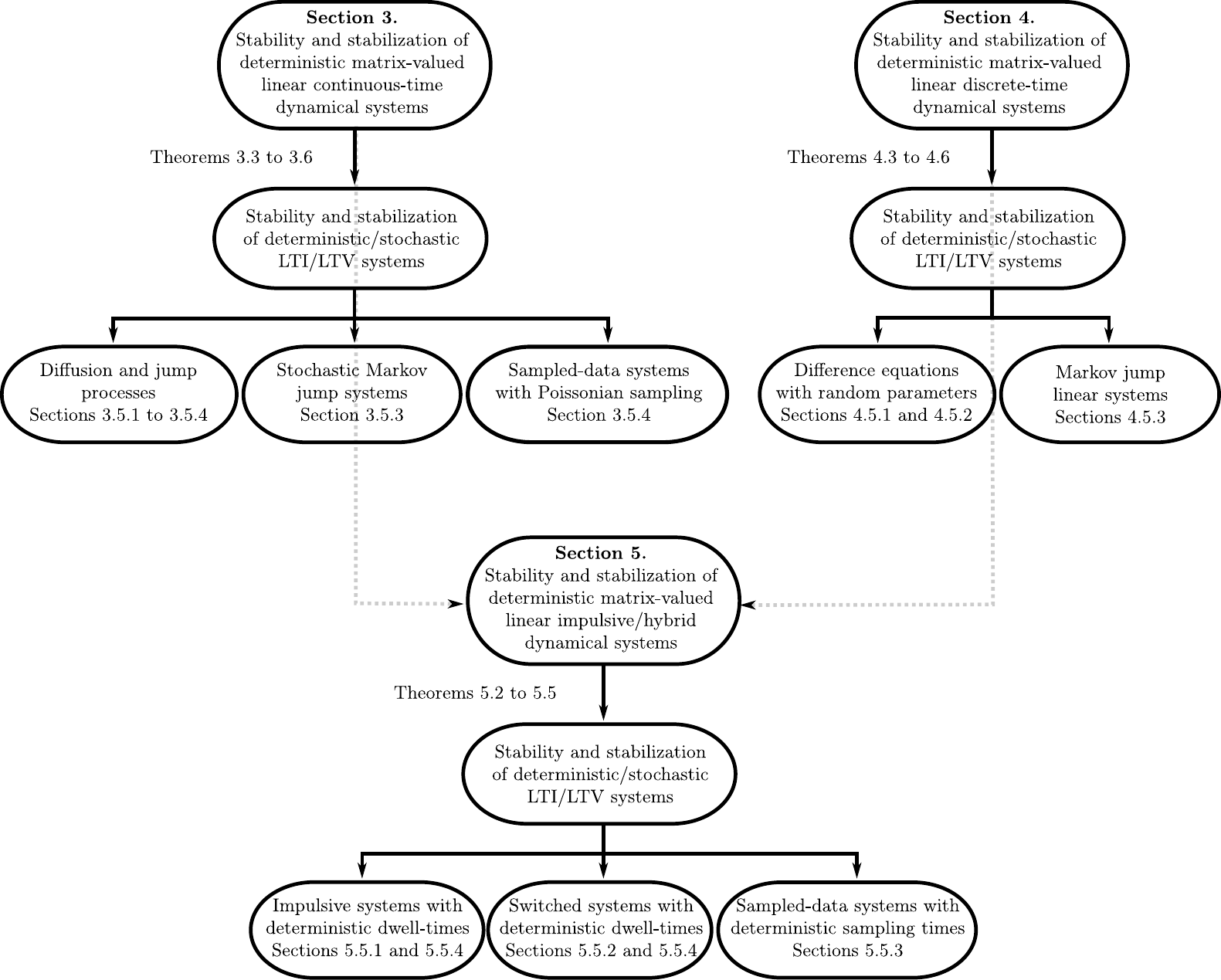}\\
  \caption{Structure of the paper and types of systems that can be considered within the considered framework.}\label{structure}
\end{figure}

\black{We obtain then analogous results in the discrete-time case where both the cases of LTV and LTI systems are addressed. In this context, the stability conditions are rather expressed in terms of Lyapunov-Stein difference equations and Lyapunov-Stein equations, respectively. A spectral condition is also obtained in the LTI case. Stabilization conditions are formulated in terms of a difference linear matrix inequality. A result on delay systems is also obtained, with the same conclusions as in the continuous-time case. However, this result does not seem to have been previously obtained in the literature. Again, some examples are considered in order to demonstrate the usefulness of the approach. The first example is that of stochastic LTV systems involving two different classes of stochastically varying parameters. The first class is that of identically distributed independent parameters with zero mean and unit variance whereas the second one is that of independent non-identically distributed Bernoulli random variables. The second example pertains to the analysis of a class of linear stochastic discrete-time system with delays, whereas the last one considers time-varying Markov jump linear systems for which we recover some of the results in \citep{Costa:05}.}

\black{The two previously considered frameworks are then combined to give rise to matrix-valued impulsive systems which are described as coupled matrix-valued differential and difference equations. As a result, the stability conditions are expressed in terms of coupled differential-/difference equations in both the LTV and LTI cases. Following the ideas in \citep{Goebel:12}, those stability conditions are relaxed giving rise to persistent flowing and persistent jumping stability conditions which only assume a strict decrease of the Lyapunov function along the flow and at the jumps, respectively. Stabilization conditions are also obtained and stated in terms of coupled convex differential and difference linear matrix inequalities. As an illustration of the main result, the first example addresses a class of stochastic LTV systems driven by Brownian motions and subject to deterministic impulses and it is shown that we can easily recover and extend the results obtained in \citep{Briat:15i}. The second example pertains to a class of stochastic LTV switched systems that generalizes the classes considered in \citep{Shaked:14,Briat:14f} and for which we retrieve those results as special cases of those obtained in the current framework. The next example considers the case of stochastic LTV sampled-data systems which we reformulate as a stochastic hybrid system. In the case when the sampling times are deterministic, we obtain a generalization of the results in \citep{Briat:13d,Briat:15i,Goebel:12}. Finally, the last example addresses the analysis of a certain class of stochastic LTV impulsive systems with stochastic impulses and switching first considered in \citep{Souza:21} and for which we can recover and generalize the existing results to the LTV case.}\\

\noindent\textbf{Outline.} Preliminary definitions are given in Section \ref{sec:Preliminaries}. Continuous-time matrix-valued differential equations are considered in Section \ref{sec:CT} whereas their discrete-time counterparts in Section \ref{sec:DT}. The results are unified in a hybrid formulation in Section \ref{sec:Hybrid}. Application examples are considered in the related sections.\\

\noindent\textbf{Notations.} The set of positive and nonnegative integers are denoted by $\mathbb{Z}_{>0}$ and $\mathbb{Z}_{\ge0}$, respectively. The set of positive and nonnegative real numbers are denoted by $\mathbb{R}_{>0}$ and $\mathbb{R}_{\ge0}$, respectively. Similarly, the cone of \blue{symmetric} positive definite and \blue{symmetric} positive semidefinite matrices of dimension $n$ are denoted by $\pd^n$ and $\psd^n$, respectively.\blue{ The set of symmetric matrices of dimension $n$ is simply denoted by $\sm^n$.} The left open right-half plane of the complex plane is denoted by $\mathbb{C}_{<0}$. The natural basis for the Euclidian space is denoted by $\{e_1,\ldots\}$. For a square real matrix $A$, we denote $\He[A]:=A+A^{\T}$. For a matrix $A$ with columns $\{a_1,\ldots,a_n\}$, the vectorization operator $\vect(\cdot)$ stacks the columns of a matrix on the top of each other; i.e. $\vect(A)=\begin{bmatrix}   a_1^{\T} & \ldots & a_n^{\T} \end{bmatrix}^{\T}$. 
For a square matrix $A$, the minimum and maximum eigenvalues of $A$ are denoted by $\lmin(A)$ and $\lmax(A)$, respectively. The Hadamard product is denoted by $\odot$ whereas the Kronecker product and sum are denoted by $\otimes$ and $\oplus$, respectively. \blue{Finally, $\star$ stands for symmetric entries in symmetric matrices.}

\section{Preliminaries}\label{sec:Preliminaries}

\black{We state in this section the essential definitions and results that will be key for deriving the main results of the paper.

 \begin{definition}[Nuclear norm \citep{Bhatia:97}]
    Let $A\in\mathbb{R}^{n\times m}$, then its nuclear-norm $||A||_*$ is defined as
    \begin{equation}
        ||A||_*:=\sum_{i=1}^{\min\{m,n\}}\sigma_i(A)
      \end{equation}
      where $\sigma_i(A)$ is the $i$-th singular values of $A$. When $A\in\psd^n$, then we have that
      \begin{equation}
       ||A||_*=\trace(A).
      \end{equation}
  \end{definition}

The following inner-product, also called Frobenius inner product, extends the concept of scalar product to matrices:
  \begin{definition}[Inner product on $\mathbb{R}^{n\times n}$]
    The inner product $\langle\cdot,\cdot\rangle$ on $\mathbb{R}^{n\times n}$ is defined as
    \begin{equation}
      \langle A,B\rangle:=\trace AB^{\T}
    \end{equation}
    for any $A,B\in\mathbb{R}^{n\times n}$.
  \end{definition}

The following result will prove to be instrumental in proving the main stability results of the paper \blue{and is proven here for the sake of completeness}:
\begin{proposition}\label{prop:PQ}
  Let us consider matrices $P\in\mathbb{S}^n_{\succ0}$, $Q\in\mathbb{S}^n_{\succeq0}$ and $R\in\mathbb{S}^n$. Then, the following statement holds:
  \begin{enumerate}[(a)]
    \item\label{st:norm:2}  Let $P\in\mathbb{S}^n_{\succ0}$ and $Q\in\mathbb{S}^n_{\succeq0}$, then we have that $\langle P,Q\rangle=0$ if and only if $Q=0$.
    \item\label{st:norm:3} Let  $R\in\mathbb{S}^n$, then we have that $\langle R,Q\rangle<0$ (resp. $\langle R,Q\rangle\le0$)  for all $Q\succeq0$, $Q\ne0$, if and only if $R\prec0$ (resp. $R\preceq0$).
    \item\label{st:norm:4}  Let $P\in\mathbb{S}^n_{\succ0}$, then we have that $\lmin(P)||Q||_*\le\langle P,Q\rangle\le\lmax(P)||Q||_*$ for all $Q\succeq0$.
  \end{enumerate}
\end{proposition}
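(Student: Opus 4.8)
The plan is to reduce all three statements to a single observation: since $P$, $Q$ and $R$ are symmetric, the inner product takes the form $\langle P,Q\rangle=\trace(PQ)$, and one can exploit the spectral decomposition of the positive semidefinite argument $Q$. Writing $Q=\sum_{i=1}^n\mu_i v_iv_i^{\T}$ with $\mu_i:=\lambda_i(Q)\ge0$ and $\{v_i\}$ an orthonormal eigenbasis, one obtains the key identity
\begin{equation}
\langle P,Q\rangle=\trace(PQ)=\sum_{i=1}^n\mu_i\,v_i^{\T}Pv_i,
\end{equation}
in which every coefficient $\mu_i$ is nonnegative. This one decomposition drives all three parts.

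For statement (\ref{st:norm:2}), I would note that $P\succ0$ forces $v_i^{\T}Pv_i>0$ for each $i$, so the displayed sum is a sum of nonnegative terms; it vanishes if and only if every $\mu_i=0$, that is, if and only if $Q=0$. (Equivalently, one may write $\langle P,Q\rangle=\trace(P^{1/2}QP^{1/2})$ and use that a positive semidefinite matrix has zero trace only when it is itself zero, together with the invertibility of $P^{1/2}$.)

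For statement (\ref{st:norm:3}), the sufficiency directions follow by applying (\ref{st:norm:2}) to $-R$: if $R\prec0$ then $-R\succ0$, and for $Q\succeq0$, $Q\ne0$ the quantity $\langle -R,Q\rangle$ is strictly positive by (\ref{st:norm:2}), hence $\langle R,Q\rangle<0$; the nonstrict case is obtained by the same argument with $-R\succeq0$ and the nonnegativity of each term $v_i^{\T}(-R)v_i$. For necessity I would test against the rank-one matrices $Q=vv^{\T}\succeq0$ with $v\ne0$ arbitrary: since $\langle R,vv^{\T}\rangle=v^{\T}Rv$, the hypothesis $\langle R,Q\rangle<0$ (resp. $\le0$) for all nonzero $Q\succeq0$ yields $v^{\T}Rv<0$ (resp. $\le0$) for all $v\ne0$, which is precisely $R\prec0$ (resp. $R\preceq0$).

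Finally, for statement (\ref{st:norm:4}), I would bound each Rayleigh quotient in the key identity by $\lmin(P)\le v_i^{\T}Pv_i\le\lmax(P)$ and factor out these bounds, giving $\lmin(P)\sum_i\mu_i\le\langle P,Q\rangle\le\lmax(P)\sum_i\mu_i$; since $Q\succeq0$ its eigenvalues coincide with its singular values, so $\sum_i\mu_i=\trace(Q)=||Q||_*$, which is the claimed inequality. I do not expect a genuine obstacle here, as each part is elementary once the spectral decomposition is in place; the only points requiring care are the necessity direction of (\ref{st:norm:3}), where one must restrict the test set to rank-one matrices rather than attempting a more general argument, and the bookkeeping of the strict versus nonstrict variants.
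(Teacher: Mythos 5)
Your proof is correct and follows essentially the same route as the paper: elementary spectral decompositions combined with rank-one test matrices $vv^{\T}$ for the necessity direction of (\ref{st:norm:3}), and the identity $\sum_i\mu_i=\trace(Q)=||Q||_*$ for (\ref{st:norm:4}). The only cosmetic differences are that you diagonalize $Q$ uniformly across all three parts and derive the sufficiency in (\ref{st:norm:3}) by applying (\ref{st:norm:2}) to $-R$, whereas the paper diagonalizes $P$ in part (\ref{st:norm:2}) and $R$ in part (\ref{st:norm:3}); the arguments are interchangeable and equally rigorous.
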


\begin{proof}
Throughout this proof, we denote by $\{v_1,\ldots,v_n\}$ the set of orthonormal eigenvectors of the matrix $P$ associated with the eigenvalues
$\{\lambda_1,\ldots,\lambda_n\}$. Observe also that the matrix $V=[v_1\ \ldots\ v_n]$ forms an orthonormal basis of $\mathbb{R}^n$.\\


  \noindent\textbf{Proof of statement \eqref{st:norm:2}.} Clearly if $Q=0$, then $\langle P,Q\rangle=0$. Assume on the other hand that $\langle P,Q\rangle=0$. This is equivalent to saying that
  \begin{equation}
    \sum_i\lambda_iv_i^\T Qv_i=0
  \end{equation}
  and we necessarily have that $v_i^\T Qv_i=0$ for all $i=1,\ldots,n$ since $\lambda_i>0$ for all $i=1,\ldots,n$. Since $V$ is a basis of $\mathbb{R}^n$,
  then $V^\T QV$ is similar to $Q$ and the diagonal elements of $V^\T QV$  are all zero. Since the trace of a matrix is the sum of its eigenvalues and since
  $Q$ is positive semidefinite, then this implies that $Q$ must be the zero matrix. This proves statement \eqref{st:norm:2}.\\

  \noindent\textbf{Proof of statement \eqref{st:norm:3}.} 
   Assume first that $R\prec0$. Then,
  we have that $\langle R,Q\rangle=\textstyle\sum_{i=1}^n\mu_iw_i^\T Qw_i$ where $\mu_i$ and $w_i$ are the eigenvalues and the eigenvectors of the matrix
  $R$. This implies that $\langle R,Q\rangle<0$ for all $Q\succeq0$, $Q\ne0$.

  We show now the reverse implication. Assume that $\langle R, Q\rangle<0$ for all $Q\succeq0$, $Q\ne0$. Then, this must hold for all $Q=Q_i=w_iw_i^\T
  \succeq0$, $i=1,\ldots,n$. This implies that we must have that $\langle R, Q_i\rangle=\mu_i<0$. This proves the necessity and the statement \eqref{st:norm:3}.\\

  \noindent\textbf{Proof of statement \eqref{st:norm:4}.} Since $Q$ is symmetric, then it can be written as $Q=\textstyle\sum_{i=1}^n\theta_i z_iz_i^{\T}$
  where $\theta_i $ and $z_i$ are its eigenvalues and eigenvectors. Then,
\begin{equation}
  \begin{array}{rcl}
    \langle P,Q\rangle &=&  \sum_{i=1}^n\theta_i\langle P,z_iz_i^\T\rangle=\sum_{i=1}^n\theta_iz_i^\T Pz_i\\
                    &\le&\lambda_{\mathrm{max}}(P)\sum_{i=1}^n\theta_i\\
                    &=&\lambda_{\mathrm{max}}(P)||Q||_*.
  \end{array}
\end{equation}
The lower bound is proven analogously. This proves statement \eqref{st:norm:4}.
\end{proof}

We will finally need the following definitions:
\begin{define}[$\psd^n$-copositive definiteness]
  A continuous function $V:\mathbb{R}_{\ge0}\times\psd^n\mapsto\mathbb{R}$ is said to be uniformly $\psd^n$-copositive definite if and only if
  \begin{enumerate}[(a)]
    \item $V(t,0)=0$ for all $t\ge0$, and
    \item  there exists a continuous, strictly increasing function  $\alpha:\mathbb{R}_{\ge0}\mapsto\mathbb{R}_{\ge0}$, $\alpha(0)=0$, such that $V(t,X)\ge\alpha(||X||)$ for all $(t,X)\in\mathbb{R}_{\ge0}\times\psd^n$, \blue{and where $||\cdot||$ denotes any matrix norm; e.g. the nuclear norm.}
  \end{enumerate}
  Moreover, the function $V$ is said to be radially unbounded if $\alpha(\cdot)$ is such that $\alpha(s)\to\infty$ as $s\to\infty$.
\end{define}
%

\begin{define}[Decrescent function \citep{Murray:94}]
  A continuous function $V:\mathbb{R}_{\ge0}\times\psd^n\mapsto\mathbb{R}_{\ge0}$ is said to be decrescent if there exists a continuous, strictly increasing function  $\alpha:\mathbb{R}_{\ge0}\mapsto\mathbb{R}_{\ge0}$ with $\alpha(0)=0$ and $\alpha(s)\to\infty$ as $s\to\infty$ such that $V(t,X)\le\alpha(||X||)$ for all $(t,X)\in \mathbb{R}_{\ge0}\times\psd^n$, \blue{and where $||\cdot||$ denotes any matrix norm; e.g. the nuclear norm.}
\end{define}

}

\section{Linear Matrix-Valued Symmetric Continuous-Time Systems}\label{sec:CT}

\blue{This section is devoted to the definition of linear matrix-valued symmetric continuous-time systems that evolves on the cone of positive semidefinite matrices together with their associated generators and state-transition operator. Using those definitions a necessary and sufficient condition for their uniform exponential stability is obtained in Theorem \ref{th:stabMatCT_LTV} in terms of a matrix-differential equation or a matrix-differential inequality. This result is then specialized to the LTI case in Theorem \ref{th:stabMatCT_LTI} where additional spectral conditions are also obtained. Convex stabilizability and stabilization conditions are also obtained in Theorem \ref{th:CT:stabz} and take the form of differential linear matrix inequalities. For completeness, the case of systems with delays is considered and a necessary and sufficient stability condition, generalizing existing ones, is also obtained in Theorem \ref{th:CT:delay}. The simplicity and the versatility of the approach as a general tool for the analysis of (stochastic) continuous-time systems is illustrated through several examples, namely on the analysis of linear time-varying stochastic systems subject to Brownian motions and Poissonian jumps, LTI time-delay systems subject to Brownian motions, time-varying Markov jump linear systems, and time-varying sampled-data systems with Poissonian sampling.}

\subsection{Preliminaries}

\black{Let us consider here the following class of matrix-valued continuous-time dynamical systems
\begin{equation}\label{eq:matdiffeqCT_LTV}
\begin{array}{rcl}
  \dot{X}(t)&=&\He[A_0(t) X(t)]+\sum_{i=1}^NA_i(t) X(t)A_i(t)^\T+\mu(t) X(t),t\ge t_0\\
  X(t_0)&=&X_0\in\psd^n
\end{array}
\end{equation}
where $A_i:\mathbb{R}_{\ge0}\mapsto\mathbb{R}^{n\times n}$, $i=0,\ldots,N$, and $\mu:\mathbb{R}_{\ge0}\mapsto\mathbb{R}$ are piecewise-continuous and bounded. It is immediate to verify that the right-hand side is uniformly Lipschitz in $X$ which implies that there exists a unique global solution which can be written as
\begin{equation}
  X(t)=S(t,s)X(s), t\ge s\ge0
\end{equation}
where $S(\cdot,\cdot)$ is the state transition operator verifying
\begin{equation}
  \dfrac{\partial}{\partial t}S(t,s)X=\mathcal{C}_t(S(t,s)X)\blue{\textnormal{ for all }}X\in\psd^n
\end{equation}
where the generator $\mathcal{C}_t$ is given by
\begin{equation}\label{eq:operatorLt}
  \begin{array}{rccl}
    \mathcal{C}_t:&\sm^n&\mapsto&\sm^n\\
    & X&\mapsto & A_0(t)X+XA_0(t)^\T+\sum_{i=1}^NA_i(t) XA_i(t)^\T+\mu(t) X.
  \end{array}
\end{equation}
The adjoint operator of $ \mathcal{C}_t$, denoted by $ \mathcal{C}_t^*$, is given by
\begin{equation}\label{eq:operatorLtadjoint}
  \begin{array}{rccl}
    \mathcal{C}_t^*:&\sm^n&\mapsto&\sm^n\\
    & X&\mapsto & A_0(t)^\T X+XA_0(t)+\sum_{i=1}^NA_i(t)^\T XA_i(t)+\mu(t) X.
  \end{array}
\end{equation}


%
The following result shows that the system \eqref{eq:matdiffeqCT_LTV} leaves the cone $\mathbb{S}^n_{\succeq0}$ invariant:
\begin{proposition}\label{prop:positivityCT}
  The matrix-valued differential equation \eqref{eq:matdiffeqCT_LTV} leaves the cone of positive semidefinite matrices invariant, that is, for any
  $X_0\succeq0$, we have that $X(t,t_0,X_0)\succeq0$ for all $t\ge0$.
\end{proposition}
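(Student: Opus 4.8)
The plan is to exhibit the solution through a variation-of-constants (Duhamel) formula built around the part of the generator that acts by congruence, so that positivity is manifest at every stage of a Picard approximation. First I would absorb the scalar term into the drift: since $\mu(t)X=\tfrac{\mu(t)}{2}X+X\tfrac{\mu(t)}{2}$, setting $G(t):=A_0(t)+\tfrac{\mu(t)}{2}I$ lets me rewrite the generator of \eqref{eq:matdiffeqCT_LTV} as $\mathcal{C}_t(X)=G(t)X+XG(t)^\T+\mathcal{P}_t(X)$ with $\mathcal{P}_t(X):=\sum_{i=1}^N A_i(t)XA_i(t)^\T$. The point of this rewriting is that $\mathcal{P}_t$ maps $\psd^n$ into $\psd^n$ for every $t$ (each term $A_iXA_i^\T$ is a congruence of a positive semidefinite matrix), whereas the sign of $\mu$ no longer plays any role.

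Next I would introduce the fundamental solution $\Phi(t,s)$ of $\dot\phi=G(t)\phi$, $\Phi(s,s)=I$, and claim the representation
\begin{equation}
X(t)=\Phi(t,t_0)X_0\Phi(t,t_0)^\T+\int_{t_0}^t\Phi(t,s)\mathcal{P}_s\big(X(s)\big)\Phi(t,s)^\T\,\d s .
\end{equation}
Differentiating this expression and using $\partial_t\Phi(t,s)=G(t)\Phi(t,s)$ together with $\Phi(t,t)=I$ recovers $\dot X=\mathcal{C}_t(X)$, so by the uniqueness of solutions already noted for \eqref{eq:matdiffeqCT_LTV} this is indeed the solution. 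Two elementary structural facts then drive the argument: the congruence map $M\mapsto\Phi(t,s)M\Phi(t,s)^\T$ sends $\psd^n$ into $\psd^n$, and the integral of a continuous $\psd^n$-valued function is again in $\psd^n$ (because $v^\T(\int\!F)v=\int v^\T F v\,\d s\ge0$). I would then run the Picard iteration associated with this integral equation, starting from $X^{(0)}(t):=\Phi(t,t_0)X_0\Phi(t,t_0)^\T\in\psd^n$ and $X^{(k+1)}(t):=\Phi(t,t_0)X_0\Phi(t,t_0)^\T+\int_{t_0}^t\Phi(t,s)\mathcal{P}_s(X^{(k)}(s))\Phi(t,s)^\T\,\d s$. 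An induction using the two facts above shows every iterate is $\psd^n$-valued; the iteration converges uniformly on compact time intervals to $X(t)$ (the Volterra operator is a contraction on short intervals, or one invokes Gronwall); and since $\psd^n$ is closed the limit satisfies $X(t)\succeq0$ for all $t\ge t_0$.

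The conceptual reason the cone is preserved is the subtangentiality condition at its boundary: if $X\succeq0$ and $Xv=0$, then $v^\T\mathcal{C}_t(X)v=\sum_i(A_i^\T v)^\T X(A_i^\T v)\ge0$, because $Xv=0$ (hence $v^\T X=0$) annihilates the $G X+XG^\T$ terms. This is exactly Nagumo's invariance criterion for the closed convex cone $\psd^n$, and one could conclude directly from it. I expect the main obstacle to be precisely the passage from this purely boundary statement to a rigorous global conclusion; the Duhamel/Picard route above side-steps Nagumo's theorem and is fully self-contained, at the cost of the mild bookkeeping of the variation-of-constants formula. A tempting shortcut, namely a forward Euler scheme, must be avoided: $X+h(GX+XG^\T)$ need not be positive semidefinite, so the drift has to be propagated exactly as a congruence rather than linearized, which is exactly what the factor $\Phi$ accomplishes.
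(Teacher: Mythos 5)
Your proposal is correct and follows essentially the same route as the paper: absorb $\mu(t)/2$ into the drift, form the state-transition matrix $\Phi(t,s)$ of $\dot{x}=(A_0(t)+\tfrac{1}{2}\mu(t)I)x$, and read off positivity from the implicit Duhamel representation in which the only non-congruence term is the $\psd^n$-preserving map $X\mapsto\sum_i A_i X A_i^{\T}$. In fact you go slightly further than the paper, which simply asserts the conclusion from this formula as ``clear''; your Picard iteration (each iterate $\psd^n$-valued, limit in the closed cone) supplies exactly the justification that step tacitly requires.
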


\begin{proof}
  The solution to \eqref{eq:matdiffeqCT_LTV} can be shown to satisfy the expression
  \begin{equation}
    X(t)=\Phi(t,t_0)X_0\Phi(t,t_0)^{\T}+\sum_{i=1}^N\int_{t_0}^t\Phi(t,s)A_i(s) X(s)A_i(s)^\T\Phi(t,s)^{\T}\ds
  \end{equation}
  where $\Phi(t,s)$ is the state-transition matrix associated with the system $\textstyle\dot{x}(t)=(A_0(t)+\frac{1}{2}\mu(t)I_n)x(t)$. The result can be obtained by the successive approximations procedure; see e.g. Proposition 2.5 from \cite{Dragan:04}.
  \end{proof}

\begin{definition} \label{def:CTsystemMat}
The zero solution of the system \eqref{eq:matdiffeqCT_LTV} is said to be \textbf{globally uniformly exponentially stable with rate $\alpha$} if there exist some scalars $\alpha>0$ and $\beta\ge1$ such that
   \begin{equation}
                  ||X(t)||\le\beta e^{-\alpha (t-t_0)}||X_0||
   \end{equation}
for all $t\ge t_0$, all $t_0\ge0$, and all $X_0\in\psd^n$.
\end{definition}}

\subsection{Stability Analysis}

With the previous definitions and results in mind, we are now able to state the following result\footnote{\bblue{It was brought to the attention of the author after publication of the paper that this result can be seen as a particular case of Theorem 2.4.2 in \cite{Dragan:13}.}} characterizing the uniform exponential stability of the system \eqref{eq:matdiffeqCT_LTV}
\black{\begin{theorem}\label{th:stabMatCT_LTV}
  The following statements are equivalent:
  \begin{enumerate}[(a)]
  \item\label{st:stabMatCT_LTV1} The system \eqref{eq:matdiffeqCT_LTV} is uniformly exponentially stable.
    %
    %
    \item\label{st:stabMatCT_LTV3}  There exist a differentiable matrix-valued function $P:\mathbb{R}_{\ge0}\mapsto\mathbb{S}^n_{\succ0}$ and a
        continuous matrix-valued function $Q:\mathbb{R}_{\ge0}\mapsto\mathbb{S}^n_{\succ0}$ such that
    \begin{equation}
    \begin{array}{rcccl}
      \alpha_1I&\preceq& P(t)&\preceq &\alpha_2I,\\
      \beta_1I&\preceq &Q(t)&\preceq &\beta_2I,
    \end{array}
    \end{equation}
    and such that \textbf{Lyapunov differential equation}
    \begin{equation}
      \dot{P}(t)+\mathcal{C}_t^*(P(t))+Q(t)=0
  \end{equation}
  hold for all $t\ge0$ and for some scalars $\alpha_1,\alpha_2,\beta_1,\beta_2>0$.
    \item\label{st:stabMatCT_LTV4}  There exists a differentiable matrix-valued function $P:\mathbb{R}_{\ge0}\mapsto\mathbb{S}^n_{\succ0}$ such that
        the
    \begin{equation}
      \alpha_1I\preceq P(t) \preceq \alpha_2I,
    \end{equation}
    and such that \textbf{Lyapunov differential inequality}
    \begin{equation}
    \dot{P}(t)+\mathcal{C}_t^*(P(t))\preceq-\alpha_3 I
  \end{equation}
  hold for all $t\ge0$ \blue{and for some scalars $\alpha_1,\alpha_2,\alpha_3>0$}.
  \end{enumerate}
\end{theorem}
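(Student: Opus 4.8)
The plan is to prove the cycle of implications \eqref{st:stabMatCT_LTV1} $\Rightarrow$ \eqref{st:stabMatCT_LTV3} $\Rightarrow$ \eqref{st:stabMatCT_LTV4} $\Rightarrow$ \eqref{st:stabMatCT_LTV1}, which closes all the equivalences. The central object throughout is the linear $\psd^n$-copositive Lyapunov function $V(t,X):=\langle P(t),X\rangle=\trace(P(t)X)$, whose behaviour along the flow is governed by the adjoint identity $\langle P(t),\mathcal{C}_t(X)\rangle=\langle\mathcal{C}_t^*(P(t)),X\rangle$ built into \eqref{eq:operatorLtadjoint}. The implication \eqref{st:stabMatCT_LTV3} $\Rightarrow$ \eqref{st:stabMatCT_LTV4} is immediate: the Lyapunov differential equation gives $\dot{P}(t)+\mathcal{C}_t^*(P(t))=-Q(t)\preceq-\beta_1 I$, so that \eqref{st:stabMatCT_LTV4} holds with $\alpha_3=\beta_1$.

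For \eqref{st:stabMatCT_LTV4} $\Rightarrow$ \eqref{st:stabMatCT_LTV1}, I would differentiate $V$ along a solution $X(t)$ of \eqref{eq:matdiffeqCT_LTV}. Using $\dot{X}(t)=\mathcal{C}_t(X(t))$ and the adjoint identity,
\[
\frac{d}{dt}V(t,X(t))=\langle\dot{P}(t)+\mathcal{C}_t^*(P(t)),X(t)\rangle\le-\alpha_3\trace(X(t))=-\alpha_3||X(t)||_*,
\]
where the inequality combines the differential inequality with the cone invariance $X(t)\succeq0$ (Proposition \ref{prop:positivityCT}) and $\langle-\alpha_3 I,X\rangle=-\alpha_3\trace(X)$. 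Proposition \ref{prop:PQ}\eqref{st:norm:4} supplies the sandwich $\alpha_1||X(t)||_*\le V(t,X(t))\le\alpha_2||X(t)||_*$, whence $\frac{d}{dt}V\le-(\alpha_3/\alpha_2)V$. Integrating this scalar comparison and using the sandwich a second time yields $||X(t)||_*\le(\alpha_2/\alpha_1)e^{-(\alpha_3/\alpha_2)(t-t_0)}||X_0||_*$, i.e. uniform exponential stability with rate $\alpha_3/\alpha_2$ and $\beta=\alpha_2/\alpha_1\ge1$.

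The converse \eqref{st:stabMatCT_LTV1} $\Rightarrow$ \eqref{st:stabMatCT_LTV3} is the substantial part, handled by a converse-Lyapunov construction. I would fix $Q\equiv I$ (so $\beta_1=\beta_2=1$) and set
\[
P(t):=\int_t^\infty S^*(s,t)\,\ds,
\]
where $S^*(s,t)$ is the adjoint of the state-transition operator, characterised by $\langle S^*(s,t)Y,X\rangle=\langle Y,S(s,t)X\rangle$. Convergence and the upper bound follow from stability: for $X\succeq0$ one has $\langle P(t),X\rangle=\int_t^\infty\trace(S(s,t)X)\,\ds=\int_t^\infty||X(s)||_*\,\ds\le(\beta/\alpha)||X||_*$, so $P(t)\preceq(\beta/\alpha)I$. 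For the lower bound I would exploit the boundedness of the coefficients $A_i,\mu$: since $\frac{d}{ds}\trace(S(s,t)X)=\trace(\mathcal{C}_s(S(s,t)X))$ and $|\trace(\mathcal{C}_s(Y))|\le L\trace(Y)$ for $Y\succeq0$ with $L$ fixed by the coefficient bounds, a Gr\"onwall estimate gives $\trace(S(s,t)X)\ge e^{-L(s-t)}\trace(X)$, so $\langle P(t),X\rangle\ge\int_t^{t+1}e^{-L(s-t)}\trace(X)\,\ds=\alpha_1||X||_*$ with $\alpha_1=(1-e^{-L})/L>0$; testing against $X=ww^{\T}$ yields $P(t)\succeq\alpha_1 I$. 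Differentiating $P$ by Leibniz's rule, the moving lower limit contributes $-S^*(t,t)=-I=-Q(t)$ while the integrand contributes, through the backward equation $\partial_t S^*(s,t)=-\mathcal{C}_t^*S^*(s,t)$, the term $-\mathcal{C}_t^*\int_t^\infty S^*(s,t)\,\ds=-\mathcal{C}_t^*(P(t))$; hence $\dot{P}(t)+\mathcal{C}_t^*(P(t))+I=0$, which is exactly \eqref{st:stabMatCT_LTV3}.

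The main obstacle is the lower bound $P(t)\succeq\alpha_1 I$ together with the differentiability of $P$: both require a uniformity in $t$ that cannot come from stability alone, since stability only controls the decay of the integrand and not its value near $s=t$, and this is precisely where the boundedness of the coefficients enters via the Gr\"onwall step. The accompanying technical points are the justification of differentiation under the improper integral (dominated convergence using the exponential bound) and the backward adjoint evolution equation for $S^*(s,t)$; once these are secured, the Lyapunov differential equation follows directly and the cycle is complete.
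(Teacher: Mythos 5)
Your proposal is correct and follows essentially the same route as the paper: the sufficiency direction uses the identical Lyapunov function $V(t,X)=\langle P(t),X\rangle$ with the adjoint identity, and your converse construction $\langle P(t),X\rangle=\int_t^\infty \trace(S(s,t)X)\,\d s$ is exactly the paper's $\bar{P}$ from \eqref{eq:kdsakdks;ld456456465467468} specialized to $Q\equiv I$. Your Gr\"onwall lower bound $\trace(S(s,t)X)\ge e^{-L(s-t)}\trace(X)$ is the same coefficient-boundedness argument the paper runs via $\frac{\d}{\d s}\langle Q(s),X(s)\rangle\ge -c\langle Q(s),X(s)\rangle$, and your Leibniz/backward-adjoint derivation of $\dot{P}+\mathcal{C}_t^*(P)+I=0$ is a mildly more explicit version of the paper's differentiation of the defining scalar identity along trajectories.
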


\begin{proof}
It is immediate to see that the statements \eqref{st:stabMatCT_LTV3} and \eqref{st:stabMatCT_LTV4} are equivalent.\\

\noindent \textbf{Proof that statement \eqref{st:stabMatCT_LTV3} implies statement \eqref{st:stabMatCT_LTV1}.} To this aim, define the function $V(t,X)=\langle P(t),X\rangle$ where $P(\cdot)$ satisfies the conditions of statement (b). Since $P(t)$ is positive definite, uniformly bounded and uniformly bounded away from 0, then, from Proposition \ref{prop:PQ}, we have that $V$ is $\psd^n$-copositive definite, radially unbounded and decrescent. Evaluating the derivative of the function $V$ along the trajectories of the system \eqref{eq:matdiffeqCT_LTV} yields
\begin{equation}
  \begin{array}{rcl}
    \dot{V}(t,X(t))&=&\langle \dot{P}(t),X(t)\rangle+\langle P(t),\mathcal{C}_t(X(t))\rangle\\
                                &=&\langle \dot{P}(t),X(t)\rangle+\langle \mathcal{C}^*_t(P(t)),X(t)\rangle\\
                                &=&-\langle Q(t),X(t)\rangle\\
                                &\le&-\dfrac{\beta_1}{\alpha_2}\langle P(t),X(t)\rangle=-\dfrac{\beta_1}{\alpha_2}V(t,X(t)).
  \end{array}
\end{equation}
Therefore, $V$ is a global, uniform and exponential Lyapunov function for the system \eqref{eq:matdiffeqCT_LTV}, which proves that the system \eqref{eq:matdiffeqCT_LTV} is globally uniformly exponentially stable with rate $\beta_1/\alpha_2$.\\

\noindent\textbf{Proof that statement \eqref{st:stabMatCT_LTV1}  implies statement \eqref{st:stabMatCT_LTV3}.} To this aim, assume that the system \eqref{eq:matdiffeqCT_LTV} is uniformly exponentially stable, then there exist $M,\theta>0$ such that
\begin{equation}
  ||S(s,t)X||_*\le Me^{-\theta(s-t)}||X||_*,s\ge t\ge0
\end{equation}
for all $X\in\psd^n$. We then define $\bar{P}:\mathbb{R}_{\ge0}\mapsto\pd^n$ as
  \begin{equation}\label{eq:kdsakdks;ld456456465467468}
    \langle \bar{P}(t),X(t)\rangle=\int_t^\infty\langle Q(s),X(s)\rangle\d s
  \end{equation}
  where $Q:\mathbb{R}_{\ge0}\mapsto\pd^n$ and such that $\beta_1I\preceq Q(t)\preceq \beta_2$ for some $\beta_1,\beta_2>0$ and for all $t\ge0$.  Note that $\bar{P}$ is uniquely defined since the expression is linear and it must hold for all $X(t)\succeq0$ and all $t\ge0$.  We need to prove first that there exist positive constant $\alpha_1,\alpha_2$ such that $\alpha_1I\preceq \bar P(t)\preceq\alpha_2I$ for all $k\ge0$.

Clearly, we have that
\begin{equation}
\begin{array}{rcl}
 \langle \bar{P}(t),X(t)\rangle   &=& \lim_{\tau\to\infty}\int_t^\tau \langle Q(s),S(s,t)X(t)\rangle\ds\\
                &\le&  \lim_{\tau\to\infty}\beta_2||X(t)||_*\int_t^\tau e^{-\theta(s-t)}\ds\\
                &\le&  \lim_{\tau\to\infty}\dfrac{\beta_2M(1-e^{-\theta\tau})}{\theta}||X(t)||_*\\
                    &=&\dfrac{\beta_2M}{\theta}||X(t)||_*=:\alpha_2||X(t)||_*
\end{array}
\end{equation}
which implies that $\bar P(t)\preceq\alpha_2I$. We need to prove now that there exists an $\alpha_1>0$ such that $\bar P(t)\succeq\alpha_1I$. To this aim, assume that $Q$ is differentiable with uniformly bounded derivative (which is a weak assumption as $Q$ can always be chosen to be a constant). We then obtain
\begin{equation}
\begin{array}{rcl}
  \dfrac{\d}{\d s}\langle Q(s),X(s)\rangle&=&\langle \dot{Q}(s),X(s)\rangle+\langle Q(s),\mathcal{C}_s(X(s))\rangle\\
  &=& \langle \dot{Q}(s),X(s)\rangle+\langle \mathcal{C}_s^*(Q(s)),X(s)\rangle\\
  &=& \langle \dot{Q}(s)+\mathcal{C}_s^*(Q(s)),X(s)\rangle\\
  &=&\left\langle\dot{Q}(s)+\He[Q(s)A_0(s)]+\sum_{i=1}^NA_i(s)^{\T}Q(s)A_i(s),X(s)\right\rangle.
\end{array}
\end{equation}
Since both $Q$ and $\dot Q$ are uniformly bounded, then there exists a sufficiently large $c>0$ such that
\begin{equation}
\dfrac{\d}{\d s}\langle Q(s),X(s)\rangle\ge-c\cdot\langle Q(s),X(s)\rangle
\end{equation}
for all $X(s)\in\psd^n$. Integrating both sides from $t$ to $\infty$ yields
\begin{equation}
  \int_t^\infty\dfrac{\d}{\d s}\langle Q(s),X(s)\rangle\d s\ge-c\cdot\langle \bar{P}(t),X(t)\rangle.
\end{equation}
or, equivalently,
\begin{equation}
  -\langle Q(t),X(t)\rangle\ge-c\cdot\langle \bar{P}(t),X(t)\rangle
\end{equation}
where we have used that $X$ is uniformly exponentially stable. Reorganizing the above expression, we get that
\begin{equation}
  \langle Q(t)-c\bar{P}(t),X(t)\rangle\le0.
\end{equation}
As this inequality holds for all $X(t)\in\psd^n$, this implies that $\textstyle\bar{P}(t)\succeq Q(t)/c\succeq \frac{\beta_1}{c}I=:\alpha_1I$. So, we have proven that there exist positive constant $\alpha_1,\alpha_2$ such that $\alpha_1I\preceq \bar P(t)\preceq\alpha_2I$ for all $k\ge0$.\\

  Computing now the time-derivative of \eqref{eq:kdsakdks;ld456456465467468} yields
  \begin{equation}
    \langle\dot{\bar{P}}(t),X(t)\rangle+\langle\bar{P}(t),\mathcal{C}_t(X(t))\rangle=-\langle Q(t),X(t)\rangle
  \end{equation}
  which is equivalent to
  \begin{equation}
    \langle\dot{\bar{P}}(t)+\mathcal{C}_t^*(\bar P(t))+Q(t),X(t)\rangle=0.
  \end{equation}
  As this is true for all $X(t)\succeq0$, then this implies that the conditions of statement \eqref{st:stabMatCT_LTV3} hold with $P=\bar{P}$ and the same   $Q$. This proves the desired result.
\end{proof}}

\black{The above result demonstrates that the stability of systems of the form \eqref{eq:matdiffeqCT_LTV} can be proven using a specific class of linear $\psd^n$ -copositive Lyapunov functions in a way that is similar to that of the analysis of linear positive systems using linear $\R_{\ge0}^n$-copositive Lyapunov functions. The proposed approach leads to stability conditions taking the form differential Lyapunov equations of inequalities.

In the case of LTI systems, however, we can also connect the stability properties of the system \eqref{eq:matdiffeqCT_LTV} to the spectral properties of a matrix and those of a linear operator as shown below:
\begin{theorem}[LTI case]\label{th:stabMatCT_LTI}
  Assume that the system \eqref{eq:matdiffeqCT_LTV} is time-invariant. Then, the following statements are equivalent:
  \begin{enumerate}[(a)]
    \item The system \eqref{eq:matdiffeqCT_LTV} is exponentially stable.
    \item The conditions of Theorem \ref{th:stabMatCT_LTV}, \eqref{st:stabMatCT_LTV3} and \eqref{st:stabMatCT_LTV4} hold with constant matrix-valued functions $P(t)\equiv P\in\pd^n$ and $Q(t)\equiv Q\in\pd^n$.
  \item The matrix
  \begin{equation}
    \mathcal{M}_{\mathcal{C}}:=F^{\T}\left(I_n\otimes A_0+A_0\otimes I_n+\sum_{i=1}^NA_i\otimes A_i+\alpha I_{n^2}\right)F
  \end{equation}
  is Hurwitz stable where $F\in\mathbb{R}^{n^2\times n(n+1)/2}$, $F^{\T}F=I$ is such that
  \begin{equation}
    \vect(X)=F\overline\vect(X)
  \end{equation}
  where $\overline\vect(X)$ is given by
  \blue{\begin{equation}
    \left[\begin{array}{ccccccccccccccccccccc}
      X_{11} & \sqrt{2}X_{21} & \ldots & \sqrt{2}X_{n1} & \vline & X_{22} & \sqrt{2}X_{32} & \ldots & \sqrt{2}X_{n2} & \vline & \ldots & \vline & X_{nn}.
    \end{array}\right]
  \end{equation}}
  \item We have that $\sp(\mathcal{C})\subset\mathbb{C}_{<0}$ where
  \begin{equation}
        \sp(\mathcal{C}):=\left\{\lambda\in\mathbb{C}: \mathcal{C}(X)=\lambda X\textnormal{ for some }X\in\mathbb{S}^{n},X\ne0\right\}
  \end{equation}
    and $\mathcal{C}$ is defined as the time-invariant version of the operator \eqref{eq:operatorLt}.
  \end{enumerate}
\end{theorem}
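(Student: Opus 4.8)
The plan is to route everything through two observations: that the reduced vectorization $\overline{\vect}$ turns $\dot X=\mathcal{C}(X)$ into an ordinary LTI system on $\mathbb{R}^{n(n+1)/2}$, and that statement \eqref{st:stabMatCT_LTV3} is already available in time-varying form from Theorem \ref{th:stabMatCT_LTV}. First I would set up the vectorization. Using $\vect(AXB)=(B^{\T}\otimes A)\vect(X)$ one gets $\vect(\mathcal{C}(X))=\mathcal{M}_0\vect(X)$ with
\[
  \mathcal{M}_0:=I_n\otimes A_0+A_0\otimes I_n+\sum_{i=1}^NA_i\otimes A_i+\mu I_{n^2},
\]
where $\mu$ is the constant value of $\mu(t)$ (written $\alpha$ in the statement). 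Restricting to $X\in\sm^n$ and inserting $\vect(X)=F\overline{\vect}(X)$ with $F^{\T}F=I$ yields $\overline{\vect}(\mathcal{C}(X))=\mathcal{M}_{\mathcal{C}}\,\overline{\vect}(X)$, so that $\mathcal{M}_{\mathcal{C}}=F^{\T}\mathcal{M}_0F$ is exactly the matrix of $\mathcal{C}$ in $\overline{\vect}$-coordinates. Since $\overline{\vect}$ is a linear bijection from $\sm^n$ onto $\mathbb{R}^{n(n+1)/2}$, the flow $\dot X=\mathcal{C}(X)$ on $\sm^n$ is linearly equivalent to $\dot x=\mathcal{M}_{\mathcal{C}}x$.

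The equivalence of (c) and (d) then follows immediately: $\mathcal{C}(X)=\lambda X$ for some nonzero $X\in\sm^n$ if and only if $\mathcal{M}_{\mathcal{C}}\,\overline{\vect}(X)=\lambda\,\overline{\vect}(X)$ with $\overline{\vect}(X)\neq0$, so $\sp(\mathcal{C})$ coincides with the eigenvalue set of $\mathcal{M}_{\mathcal{C}}$, whence $\mathcal{M}_{\mathcal{C}}$ Hurwitz $\Leftrightarrow\sp(\mathcal{C})\subset\mathbb{C}_{<0}$. For the equivalence of (a) and (c) I would use the standard fact that $\dot x=\mathcal{M}_{\mathcal{C}}x$ is exponentially stable iff $\mathcal{M}_{\mathcal{C}}$ is Hurwitz, together with the coordinate equivalence above, which identifies $\|X(t)\|$ with $\|x(t)\|$ up to norm equivalence.

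For the equivalence of (a) and (b) I would specialize Theorem \ref{th:stabMatCT_LTV}. The implication (b)$\Rightarrow$(a) is immediate, since constant $P,Q\in\pd^n$ are a particular instance of statement \eqref{st:stabMatCT_LTV3}, whose implication to \eqref{st:stabMatCT_LTV1} gives uniform exponential stability. For (a)$\Rightarrow$(b) I would revisit the construction in the proof of Theorem \ref{th:stabMatCT_LTV}: take $Q$ constant and define $\langle\bar P(t),X\rangle=\int_t^\infty\langle Q,S(s,t)X\rangle\,\d s$. Time-invariance gives $S(s,t)=S(s-t,0)$, so the substitution $r=s-t$ shows $\langle\bar P(t),X\rangle=\int_0^\infty\langle Q,S(r,0)X\rangle\,\d r$ is independent of $t$; hence $\bar P$ is constant, $\dot{\bar P}=0$, and the Lyapunov differential equation collapses to $\mathcal{C}^*(\bar P)+Q=0$ with constant $\bar P,Q\in\pd^n$, which is exactly statement (b). These three links — (a)$\Leftrightarrow$(b), (a)$\Leftrightarrow$(c), (c)$\Leftrightarrow$(d) — connect all four statements.

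The one genuinely delicate point is the cone-to-full-space reduction inside the (a)$\Leftrightarrow$(c) step: Definition \ref{def:CTsystemMat} only quantifies over initial data $X_0\in\psd^n$, whereas Hurwitzness of $\mathcal{M}_{\mathcal{C}}$ governs all of $\sm^n$. I expect this to be the main obstacle, and I would resolve it by linearity: any $X_0\in\sm^n$ decomposes as $X_0=X_0^{+}-X_0^{-}$ with $X_0^{\pm}\succeq0$, so $S(t,t_0)X_0=S(t,t_0)X_0^{+}-S(t,t_0)X_0^{-}$, and a uniform exponential bound on trajectories issued from the cone forces the same decay on all of $\sm^n$ (the converse being trivial). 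Everything else is routine bookkeeping with the isometry $\overline{\vect}$ and the Kronecker identities.
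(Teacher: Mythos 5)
Your proposal is correct, and it takes a somewhat different route from the paper's own proof on the key link between stability and the spectrum. The paper proves (a)$\Leftrightarrow$(d) directly by a modal expansion of the solution, $X(t)=\sum_i e^{\lambda_i t}\langle X_0,X_i\rangle$, which implicitly presumes a complete set of eigenpairs of $\mathcal{C}$ (semisimplicity) and does not comment on the fact that Definition \ref{def:CTsystemMat} only quantifies over $X_0\in\psd^n$; it then connects (c) and (d) by the Kronecker/projection argument with $F$, and disposes of (a)$\Leftrightarrow$(b) by citing Theorem \ref{th:stabMatCT_LTV}. You instead make the conjugacy $\overline{\vect}:\sm^n\to\mathbb{R}^{n(n+1)/2}$ the backbone: the flow $\dot X=\mathcal{C}(X)$ becomes $\dot x=\mathcal{M}_{\mathcal{C}}x$, and the standard fact that an LTI system is exponentially stable iff its matrix is Hurwitz replaces the modal expansion, which buys you correctness without any diagonalizability assumption (Jordan blocks are handled automatically). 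Your (c)$\Leftrightarrow$(d) is then the paper's own Kronecker-projection identity read as an eigenvalue correspondence, so that part is essentially identical in substance; note only that, as in the paper, complex $\lambda\in\sp(\mathcal{C})$ strictly requires complexifying $\sm^n$, a shared and harmless abuse. Two points where your write-up is genuinely more careful than the printed proof: first, you close the cone-versus-full-space gap explicitly via the decomposition $X_0=X_0^{+}-X_0^{-}$ with $X_0^{\pm}\succeq0$ (and this is uniform, since e.g. $||X_0^{+}||_*+||X_0^{-}||_*=||X_0||_*$ for the spectral positive/negative parts), which the paper passes over in silence; second, for (a)$\Rightarrow$(b) you exhibit the mechanism behind ``specialize to the LTI case,'' namely $S(s,t)=S(s-t,0)$ making $\langle\bar P(t),X\rangle=\int_0^\infty\langle Q,S(r,0)X\rangle\,\d r$ manifestly constant, so the differential Lyapunov equation collapses to $\mathcal{C}^*(P)+Q=0$. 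The trade-off is length: the paper's modal argument is shorter where it is valid, while your route is slightly longer but fully general and plugs the two quantifier-level gaps.
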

\begin{proof}
The proof of the equivalence between the two first statements follow from Theorem \ref{th:stabMatCT_LTV} specialized to the LTI case. \\

\noindent\textbf{Proof that statement (a) is equivalent to statement (d).} This follows from the fact that the LTI version of the system \eqref{eq:matdiffeqCT_LTV} is globally exponentially stable if and only if the eigenvalues associated with its dynamics have negative real part. Indeed, the solution to that LTI system is given by
  \begin{equation}
    X(t)=\sum_{i=1}^{n(n+1)/2}e^{\lambda_it}\langle X_0,X_i\rangle
  \end{equation}
  where $(\lambda_i,X_i)$ denotes the $i$-th pair of eigenvalues and eigenmatrices.\\

\noindent\textbf{Proof that statement (c) is equivalent to statement (d).} Using Kronecker calculus, Statement (d) is equivalent to saying that
\begin{equation}
  \left(I_n\otimes A_0+A_0\otimes I_n+\sum_{i=1}^NA_i\otimes A_i+\alpha I_{n^2}-\lambda I_{n^2}\right)\vect(X)=0,\ X\ne0,X\in\mathbb{S}^n
\end{equation}
implies $\Re(\lambda)<0$. The eigenvalues depend on the domain of the operator which is not the full space $\mathbb{R}^{n^2}$ here but the subspace associated with symmetric matrices $X$. This can be done by restricting the matrix  above to that subspace by projection. This can be achieved using the matrix $F$ as
\begin{equation}
  \left(\mathcal{M}_{\mathcal{C}}-\lambda I_{\frac{n(n+1)}{2}}\right)\overline\vect(X)=0,
\end{equation}
which is equivalent to statement (c).
\end{proof}}

The above result sheds some light and draws a parallel between standard results for linear systems theory where stability properties are connected to spectral properties of some operators/matrices. \blue{The advantage of such a result lies in its reduced computational complexity compared to a solution based on the (reduced) vectorization of the state of the matrix-valued dynamical system, which is of dimension $n(n+1)/2$. In such a case, a Lyapunov stability condition would require finding a Lyapunov function consisting of $$\dfrac{n(n+1)}{4}\left(\dfrac{n(n+1)}{2}+1\right)=O(n^4)$$ decision variables. On the other hand, the Lyapunov inequality in Theorem \ref{th:stabMatCT_LTV}, \eqref{st:stabMatCT_LTV4} reduced to the LTI case only involves $$\dfrac{n(n+1)}{2}=O(n^2)$$ decision variables, which is clearly much less. The underlying reason is that when considering the matrix-valued dynamical system, the positive semidefiniteness of the state of the dynamical system can be easily considered unlike in the vectorized case where the positive semidefiniteness does not transfer well to this representation structure. In fact, we would need to consider extra nonlinear semialgebraic constraints to characterize the manifold on which the state evolves. This is much more complicated than the proposed approach, which is very natural to consider for solving the current problem.}

\subsection{Stabilization}

\blue{Let us consider the perturbation of the operator $\mathcal{C}_t$ given by
\begin{equation}\label{eq:operatorLtB1}
  \begin{array}{rccl}
    \tilde{\mathcal{C}}_t:&\sm^n&\mapsto &\sm^n\\
    & X&\mapsto & (A_0(t)+B_0(t)K_0(t))X+X(A_0(t)+B_0(t)K_0(t))^\T\\
    &&&\qquad\qquad+\sum_{i=1}^N(A_i(t)+B_i(t)K_i(t))X(A_i(t)+B_i(t)K_i(t))^\T+\mu(t) X.
  \end{array}
\end{equation}
where the matrix-valued functions $A_i:\mathbb{R}_{\ge0}\mapsto\mathbb{R}^{n\times n}$, $B_i:\mathbb{R}_{\ge0}\mapsto\mathbb{R}^{n\times m_i}$, $K_i:\mathbb{R}_{\ge0}\mapsto\mathbb{R}^{m_i\times n}$, $i=0,\ldots,N$, are assumed to be piecewise continuous and bounded. The objective is to formulate conditions for the existence of some $K_i$'s such that the above operator generates an uniformly exponentially stable state transition operator; i.e. $\dot{X}(t)=\tilde{\mathcal{C}}_t(X(t))$ is globally uniformly exponentially stable. This is formulated in the following result:

\begin{theorem}\label{th:CT:stabz}
  The following statements are equivalent:
  \begin{enumerate}[(a)]
    \item There exist matrix-valued functions $K_i:\mathbb{R}_{\ge0}\mapsto\mathbb{R}^{m_i\times n}$, $i=0,\ldots,$, such that $\dot{X}(t)=\tilde{\mathcal{C}}_t(X(t))$ is uniformly exponentially stable.
    \item There exists a differentiable matrix-valued function $Q:\mathbb{R}_{\ge0}\mapsto\pd^n$, $\alpha_1I\preceq Q(t)\preceq\alpha_2I$, such that the differential matrix inequality
    \begin{equation}
        \diag_{i=0}^N[\mathscr{N}_i(t)^{\T}]
      \begin{bmatrix}
            \dot{P}(t) + A_0(t)Q(t)+Q(t)A_0^{\T}+\mu(t)Q(t) & \row_{i=1}^N[(A_i(t)]Q(t))^{\T}\\
             \star  & -I_N\otimes Q(t)
        \end{bmatrix}\diag_{i=0}^N[\mathscr{N}_i(t)]\preceq -\alpha_3 I
    \end{equation}
    holds for some $\alpha_1,\alpha_2,\alpha_3>0$ and where $\mathscr{N}_i$ is such that $\mathscr{N}_i(t)^{\T}$ is a basis for the left null-space of $B_i(t)$; i.e. $\mathscr{N}_i(t)^{\T}B_i(t)=0$ and $\mathscr{N}_i(t)$ is of maximal rank.
    \item There exist a differentiable matrix-valued function $Q:\mathbb{R}_{\ge0}\mapsto\pd^n$, $\alpha_1I\preceq Q(t)\preceq\alpha_2I$, and piecewise continuous matrix-valued functions $U_i:\mathbb{R}_{\ge0}\mapsto\mathbb{R}^{m_i\times n}$ such that the differential matrix inequality
    \begin{equation}
\begin{bmatrix}
            -\dot{Q}(t) + \He[A_0(t)Q(t)+B_0(t)U_0(t)]+\mu(t)Q(t) & \row_{i=1}^N[(A_i(t)Q(t)+B_i(t)U_i(t))^{\T}]\\
             \star  & -I_N\otimes Q(t)
        \end{bmatrix}\preceq -\alpha_3 I
    \end{equation}
    holds for some $\alpha_1,\alpha_2,\alpha_3>0$. Moreover, suitable $K_i$'s are given by $K_i(t)=U_i(t)Q(t)^{-1}$, $i=0,\ldots,N$.
  \end{enumerate}
\end{theorem}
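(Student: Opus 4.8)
The plan is to prove the equivalences by the classical route used for state-feedback synthesis: first turn uniform exponential stability of the closed loop into a Lyapunov differential inequality via Theorem \ref{th:stabMatCT_LTV}, convexify it through a Schur complement and a congruence transformation to reach statement (c), and finally eliminate the gain variables $U_i$ by the projection (elimination) lemma to reach statement (b). For the equivalence of (a) and (c), I would apply Theorem \ref{th:stabMatCT_LTV} to the closed-loop generator $\tilde{\mathcal C}_t$: statement (a) holds for a given family of gains $K_i$ if and only if there is a differentiable $P:\mathbb{R}_{\ge0}\mapsto\pd^n$, uniformly bounded above and below, satisfying $\dot P(t)+\tilde{\mathcal C}_t^*(P(t))\preceq-\alpha_3 I$. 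With $\hat A_i:=A_i+B_iK_i$ the adjoint reads $\tilde{\mathcal C}_t^*(P)=\He[\hat A_0^\T P]+\sum_{i=1}^N\hat A_i^\T P\hat A_i+\mu P$; since $P\succ0$ a Schur complement on the sum yields, up to adjusting $\alpha_3$, the block inequality
$$\begin{bmatrix}\dot P+\He[\hat A_0^\T P]+\mu P & \row_{i=1}^N[\hat A_i^\T P]\\ \star & -I_N\otimes P\end{bmatrix}\preceq -\alpha_3 I.$$
A congruence by $\diag(Q,I_N\otimes Q)$ with $Q:=P^{-1}$, together with the substitution $U_i:=K_iQ$ and the identity $\dot Q=-Q\dot P Q$, transforms the $(1,1)$ block into $-\dot Q+\He[A_0Q+B_0U_0]+\mu Q$, the off-diagonal into $\row_{i=1}^N[(A_iQ+B_iU_i)^\T]$, and the $(2,2)$ block into $-I_N\otimes Q$, which is exactly the inequality of (c). The two-sided bound $\alpha_1I\preceq Q\preceq\alpha_2 I$ is equivalent to a uniform two-sided bound on $P$, and $K_i=U_iQ^{-1}$ recovers the gains, so the construction is reversible and (a) $\Leftrightarrow$ (c). (The $(1,1)$ entry displayed in statement (b) should read $-\dot Q$, consistently with this congruence.)

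For the equivalence of (b) and (c), I would eliminate the variables $U_i$, which enter the inequality of (c) affinely and only through the first block-column. Stacking them as $\mathcal U:=\diag_{i=0}^N[U_i]$ and writing $\hat{\mathcal B}:=\diag_{i=0}^N[B_i]$ together with a fixed matrix $\hat{\mathcal C}$ that copies the $i$-th block into the first block-column, the $U_i$-dependent part of the matrix equals $\hat{\mathcal B}\mathcal U\hat{\mathcal C}+\hat{\mathcal C}^\T\mathcal U^\T\hat{\mathcal B}^\T$, while the remaining $U$-independent part is the matrix $\Psi(t)$ sandwiched by the $\diag_{i=0}^N[\mathscr N_i]$ factors in statement (b). The projection lemma then guarantees that a suitable $\mathcal U$ exists if and only if $\mathcal N_{\hat{\mathcal B}^\T}^\T\Psi\,\mathcal N_{\hat{\mathcal B}^\T}\prec0$ and $\mathcal N_{\hat{\mathcal C}}^\T\Psi\,\mathcal N_{\hat{\mathcal C}}\prec0$, where the columns of $\mathcal N_{(\cdot)}$ span the corresponding null spaces. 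Because $\ker\hat{\mathcal B}^\T=\bigoplus_{i=0}^N\ker B_i^\T$ and the columns of $\mathscr N_i$ span $\ker B_i^\T$, the first condition is precisely statement (b); the second condition collapses to the $(2,2)$ block $-I_N\otimes Q\prec0$, which holds automatically since $Q\succ0$. This yields (b) $\Leftrightarrow$ (c) and closes the cycle.

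The main obstacle is making the elimination step uniform in the time-varying setting. The projection lemma is a pointwise statement in $t$, so I must verify that a solution $U_i(t)$ can be selected piecewise continuous and bounded --- which follows from the closed-form solution of the lemma being a continuous function of the (piecewise continuous) data $A_i(t),B_i(t),Q(t),\dot Q(t)$ --- and that the strict negativity can be kept uniform, so that the single-constant form $\preceq-\alpha_3 I$ survives both the congruence and the projection. Tracking the uniform two-sided bounds on $P=Q^{-1}$ and the differentiability of $P$ (immediate since $Q\succeq\alpha_1 I\succ0$) is also required so that Theorem \ref{th:stabMatCT_LTV} can be invoked in the reverse direction; these points are routine consequences of the standing boundedness and uniform positivity assumptions.
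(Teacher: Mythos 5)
Your proposal is correct and takes essentially the same route as the paper's proof: the equivalence (a)$\Leftrightarrow$(c) by substituting the closed-loop matrices into statement \eqref{st:stabMatCT_LTV3} of Theorem \ref{th:stabMatCT_LTV}, performing the congruence with $Q(t)=P(t)^{-1}$, the change of variables $U_i=K_iQ$ and a Schur complement, and the equivalence (b)$\Leftrightarrow$(c) by isolating the gain-dependent term as $\He\bigl(\diag_{i=0}^N B_i(t)\col_{i=0}^N(U_i(t))\begin{bmatrix}I_n & 0\end{bmatrix}\bigr)$ and invoking the Projection Lemma \cite{Gahinet:94a}. Your extra observations --- that the second nullspace condition of the Projection Lemma collapses to $-I_N\otimes Q\prec0$ and is automatic, that uniformity in $t$ and a bounded selection of $U_i(t)$ must be tracked, and that the $(1,1)$ entry in statement (b) should read $-\dot{Q}(t)$ --- are consistent with, and slightly more careful than, the paper's terse argument.
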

\begin{proof}
  The equivalence between the first and the third statements follows from substituting the matrices of the operator \eqref{eq:operatorLtB1} into the condition of the statement \eqref{st:stabMatCT_LTV3} of Theorem \eqref{th:stabMatCT_LTV}. Performing then a congruence transformation with respect to $Q(t):=P(t)^{-1}$, making the changes of variables  $U_i(t)=K_i(t)Q(t)$, and using a Schur complement leads to the result. The equivalence between the two last statements follows from the fact that the differential matrix inequality in the third statement can be rewritten as
  \begin{equation}
    \begin{bmatrix}
            \dot{P}(t) + \He[A_0(t)Q(t)]+\mu(t)Q(t) & \row_{i=1}^N[(A_i(t)Q(t))^{\T}]\\
             \star  & -I_N\otimes Q(t)
        \end{bmatrix}+\He\left(\diag_{i=0}^N B_i(t)\col_{i=0}^N(U_i(t))\begin{bmatrix}
          I_n & 0_{n\times nN}
        \end{bmatrix}\right)\preceq -\alpha_3 I.
  \end{equation}
  Invoking the Projection Lemma \cite{Gahinet:94a} yields the condition in the second statement.
\end{proof}}

\subsection{Systems with delays}

\black{Interestingly, it is possible to extend some of the above results to the case of systems with delays, leading to an extension of the results in \citep{Tanaka:13b}. The LTI time-delay system version of the system \eqref{eq:matdiffeqCT_LTV} is given by
\begin{equation}\label{eq:matdiffeqCT_LTV:delay}
\begin{array}{rcl}
  \dot{X}(t)&=&\He[A_0 X(t)]+\sum_{i=1}^NA_i X(t)A_i^\T+\mu X(t)+\sum_{i=1}^NB_i X_t(t-h_i)B_i^\T,t\ge 0\\
  X_0(s)&=&\Xi(s)\in\psd^n,\ s\in[-\bar h,0]
\end{array}
\end{equation}
where the delays $h_i$'s are positive scalars with $\textstyle\bar h:=\max_i\{h_i\}$, and $\Xi\in C([-\bar h,0],\psd^n)$ is the initial condition. As for the system \eqref{eq:matdiffeqCT_LTV}, it can be proven that this system leaves the cone of positive semidefinite invariant using exactly the same arguments. Define, moreover, the operators
\begin{equation}\label{eq:operatorLt:h}
  \begin{array}{rccl}
    \mathcal{H}_i:&\sm^n&\mapsto&\sm^n\\
    & X&\mapsto & B_i  XB_i^\T.
  \end{array}
\end{equation}
and their adjoints
\begin{equation}\label{eq:operatorLtadjoint:h}
  \begin{array}{rccl}
    \mathcal{H}^*_i:&\sm^n&\mapsto&\sm^n\\
    & X&\mapsto & B_i^\T XB_i.
  \end{array}
\end{equation}

The following result demonstrates that the same delay-independent stability property holds for the system \eqref{eq:matdiffeqCT_LTV:delay} as for linear time-invariant positive systems \citep{Haddad:04,Kaczorek:09,Briat:11h,Briat:16b}:
\begin{theorem}\label{th:CT:delay}
The following statements are equivalent:
  \begin{enumerate}[(a)]
    \item\label{st:CT:delay1} The system \eqref{eq:matdiffeqCT_LTV:delay} is exponentially stable for all delay values $h_i\ge0$, $i=1,\ldots,N$.
    \item\label{st:CT:delay1b} The system \eqref{eq:matdiffeqCT_LTV:delay} with zero delays (i.e. $h_i=0$, $i=1,\ldots,N$) is exponentially stable.
    \item\label{st:CT:delay2} There exist matrices $P,Q_i\in\pd^n$, $i=1,\ldots,N$, such that the conditions
    \begin{equation}\label{eq:kdospdosadadk}
   \mathcal{C}^*(P)+\sum_{i=1}^NQ_i\prec0\ \textnormal{and }\mathcal{H}_i^*(P)-Q_i\preceq0,\ i=1,\ldots,N
\end{equation}
    hold.
    \item\label{st:CT:delay3} We have that $\sp\left(\mathcal{C}+\sum_{i=1}^N\mathcal{H}_i\right)\subset\mathbb{C}_{<0}$ where
  \begin{equation}
        \sp\left(\mathcal{C}+\sum_{i=1}^N\mathcal{H}_i\right):=\left\{\lambda\in\mathbb{C}: \mathcal{C}(X)+\sum_{i=1}^N\mathcal{H}_i(X)=\lambda X\textnormal{ for some }X\in\mathbb{S}^{n},X\ne0\right\}.
  \end{equation}
  \end{enumerate}
\end{theorem}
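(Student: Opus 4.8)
The plan is to establish the cycle $\eqref{st:CT:delay1}\Rightarrow\eqref{st:CT:delay1b}\Rightarrow\eqref{st:CT:delay2}\Rightarrow\eqref{st:CT:delay1}$ together with the equivalence $\eqref{st:CT:delay1b}\Leftrightarrow\eqref{st:CT:delay3}$. The unifying observation is that setting $h_i=0$ for all $i$ turns the delayed term $\sum_i B_iX_t(t-h_i)B_i^\T=\sum_i\mathcal{H}_i(X(t-h_i))$ into $\sum_i\mathcal{H}_i(X(t))$, so the zero-delay system is exactly $\dot X(t)=\big(\mathcal{C}+\sum_{i=1}^N\mathcal{H}_i\big)(X(t))$. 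Its generator $\mathcal{C}+\sum_i\mathcal{H}_i$ again has the form \eqref{eq:operatorLt} (the $B_i$-terms are just extra $A_i$-type terms), so the LTI theory of Theorem \ref{th:stabMatCT_LTI} applies verbatim to it, with adjoint $\mathcal{C}^*+\sum_i\mathcal{H}_i^*$.

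First I would dispatch the easy implications. The implication $\eqref{st:CT:delay1}\Rightarrow\eqref{st:CT:delay1b}$ is immediate, since $\eqref{st:CT:delay1b}$ is the special case $h_i=0$. The equivalence $\eqref{st:CT:delay1b}\Leftrightarrow\eqref{st:CT:delay3}$ follows by applying statement (d) of Theorem \ref{th:stabMatCT_LTI} to the operator $\mathcal{C}+\sum_i\mathcal{H}_i$, whose spectrum governs the exponential stability of the zero-delay system. For $\eqref{st:CT:delay1b}\Rightarrow\eqref{st:CT:delay2}$, I would invoke the Lyapunov characterization of Theorem \ref{th:stabMatCT_LTI} for that same operator to obtain $P\in\pd^n$ with $\mathcal{C}^*(P)+\sum_i\mathcal{H}_i^*(P)\preceq-\delta I\prec0$ for some $\delta>0$. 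Choosing $Q_i:=\mathcal{H}_i^*(P)+\varepsilon I$ with $\varepsilon\in(0,\delta/N)$ makes each $Q_i\in\pd^n$ and yields $\mathcal{H}_i^*(P)-Q_i=-\varepsilon I\preceq0$ together with $\mathcal{C}^*(P)+\sum_iQ_i\preceq(-\delta+N\varepsilon)I\prec0$, which is exactly $\eqref{st:CT:delay2}$.

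The substantive step is $\eqref{st:CT:delay2}\Rightarrow\eqref{st:CT:delay1}$. Here I would introduce the linear $\psd^n$-copositive Lyapunov--Krasovskii functional
\[
V(X_t)=\langle P,X(t)\rangle+\sum_{i=1}^N\int_{t-h_i}^t\langle Q_i,X(s)\rangle\,\ds,
\]
which by Proposition \ref{prop:PQ} satisfies $\lmin(P)||X(t)||_*\le V(X_t)$. Differentiating along the trajectories of \eqref{eq:matdiffeqCT_LTV:delay}, moving $P$ and $Q_i$ across the adjoints, and telescoping the integral terms gives
\[
\dot V(X_t)=\Big\langle \mathcal{C}^*(P)+\textstyle\sum_iQ_i,\,X(t)\Big\rangle+\sum_{i=1}^N\big\langle \mathcal{H}_i^*(P)-Q_i,\,X(t-h_i)\big\rangle.
\]
Because the system leaves $\psd^n$ invariant, both $X(t)$ and every $X(t-h_i)$ are positive semidefinite; hence by Proposition \ref{prop:PQ}\eqref{st:norm:3} the first term is bounded above by $-\varepsilon||X(t)||_*$ for some $\varepsilon>0$, while each delayed term is $\le0$, so that $\dot V(X_t)\le-\varepsilon||X(t)||_*$ \emph{irrespective of the delay values}. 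This is precisely where cone-invariance does the work that, for general (non-positive) systems, would otherwise force a delay-dependent analysis: the potentially destabilizing history terms are neutralized by the sign condition $\mathcal{H}_i^*(P)-Q_i\preceq0$ alone.

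The hard part is converting $\dot V\le-\varepsilon||X(t)||_*$ into a genuine exponential estimate valid for every delay configuration simultaneously. I would close this with the standard Lyapunov--Krasovskii machinery: combining the lower bound $V\ge\lmin(P)||X(t)||_*$ with the upper bound $V\le\lmax(P)||X(t)||_*+\sum_i\lmax(Q_i)\int_{t-h_i}^t||X(s)||_*\,\ds$ and the decrease estimate, a Halanay-type inequality (equivalently, testing $e^{2\gamma t}V$ with $\gamma>0$ small enough to absorb the integral surplus) produces $||X(t)||_*\le\beta e^{-\alpha(t-t_0)}\sup_{\theta\in[-\bar h,0]}||\Xi(\theta)||_*$ and hence $\eqref{st:CT:delay1}$, completing the cycle. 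The main obstacle is exactly ensuring this rate argument goes through uniformly in the $h_i$; the positivity-based sign control established above is what guarantees that the single certificate $(P,Q_i)$ obtained from the zero-delay system certifies stability for all delays at once.
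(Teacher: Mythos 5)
Your proof is correct and follows essentially the same route as the paper: the same implication cycle, the same Lyapunov--Krasovskii functional $V(X_t)=\langle P,X_t(0)\rangle+\sum_i\int_{-h_i}^0\langle Q_i,X_t(s)\rangle\,\mathrm{d}s$ for \eqref{st:CT:delay2}$\Rightarrow$\eqref{st:CT:delay1}, and the identical certificate $Q_i=\mathcal{H}_i^*(P)+\varepsilon I$ for \eqref{st:CT:delay1b}$\Rightarrow$\eqref{st:CT:delay2}, with \eqref{st:CT:delay1b}$\Leftrightarrow$\eqref{st:CT:delay3} delegated to Theorem \ref{th:stabMatCT_LTI} exactly as in the paper. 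The only difference is that you spell out the final Halanay-type passage from $\dot V\le-\varepsilon\|X_t(0)\|_*$ to a uniform exponential bound, which the paper handles by appeal to standard Lyapunov--Krasovskii theory.
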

\begin{proof}
We know from Theorem \ref{th:stabMatCT_LTI} that the statements \eqref{st:CT:delay1b} and \eqref{st:CT:delay3} are equivalent. It is also immediate to see that the statement \eqref{st:CT:delay1} implies the statement \eqref{st:CT:delay1b}. So, we need to prove the two remaining implications.\\

  \noindent\textbf{Proof that the statement \eqref{st:CT:delay2} implies the statement \eqref{st:CT:delay1}.} Let us consider the following functional
\begin{equation}\label{eq:LKF:CT}
  V(X_t)=\langle P,X_t(0)\rangle+\sum_{i=1}^N\int_{-h_i}^0\langle Q_i,X_t(s)\rangle\ds
\end{equation}
where $X_t(s)=X(t+s),s\in[-\bar h,0]$.
for which we have
\begin{equation}
  \lmin(P)||X_t(0)||_*\le V(X_t)\le (\lmax(P)+\lmax(Q))||X_t||_{c,*}
\end{equation}
where $||X_t||_{c,*}:=\max_{s\in[-\bar h,0]}||X_t(s)||_*$. Computing the derivative yields
\begin{equation}
  \begin{array}{rcl}
    \dot{V}(X_t)   &=&    \langle P,\dot{X}(t)\rangle+\sum_{i=1}^N\left[\langle Q_i,X(t)\rangle-\langle Q_i,X(t-h_i)\rangle\right]\\
                                &=&    \langle P,\mathcal{C}(X(t))\rangle+\sum_{i=1}^N\left[\langle P,\mathcal{H}_i(X(t-h_i))\rangle+\langle Q_i,X(t)\rangle-\langle Q_i,X(t-h_i)\rangle\right]\\
                                &=&    \left\langle \mathcal{C}^*(P)+\sum_{i=1}^NQ_i, X(t)\right\rangle+\sum_{i=1}^N\langle \mathcal{H}_i^*(P)-Q_i,X(t-h_i)\rangle
  \end{array}
\end{equation}
Under the conditions of the statement \eqref{st:CT:delay2}, there exists a small enough $\eps>0$ such that $\textstyle\mathcal{C}^*(P)+\sum_{i=1}^NQ_i\preceq -\eps I$ which implies
\begin{equation}
  \begin{array}{rcl}
    \dot{V}(X_t)   &\le&   -\eps ||X_t(0)||_*
  \end{array}
\end{equation}
Therefore, the functional \eqref{eq:LKF:CT} is a Lyapunov-Krasovskii functional \citep{GuKC:03} for the system \eqref{eq:matdiffeqCT_LTV:delay} and the system is exponentially stable regardless the value of the delays as the conditions do not depend on the values of the delays. This proves the implication.\\

\noindent\textbf{Proof that the statement \eqref{st:CT:delay1b} implies the statement \eqref{st:CT:delay2}.} Assume that the system \eqref{eq:matdiffeqCT_LTV:delay} with zero delays (i.e. $h_i=0$, $i=1,\ldots,N$) is exponentially stable. This is equivalent to saying that the system
\begin{equation}
  \dot{X}(t)=\mathcal{C}(X(t))+\sum_{i=1}^N\mathcal{H}_i(X(t))
\end{equation}
is exponentially stable. This implies that there exist a matrix $P\in\pd^n$ and an $\eps>0$ such that
\begin{equation}\label{eq:kd;lsakdl;sa;ldksal;dk;askd}
  \mathcal{C}^*(P)+\sum_{i=1}^N\mathcal{H}_i^*(P)\preceq-2\eps I.
\end{equation}
Define now
\begin{equation}
  Q_i=\mathcal{H}_i^*(P)+\dfrac{\eps}{N}I_n\succ0
\end{equation}
and substituting this value in \eqref{eq:kdospdosadadk} yields
\begin{equation}
  \begin{array}{rcl}
     \mathcal{C}^*(P)+\sum_{i=1}^NQ_i&=&\mathcal{C}^*(P)+\sum_{i=1}^N\mathcal{H}_i^*(P)\preceq-\eps I\prec0,\\
     \mathcal{H}_i^*(P)-Q_i&=&-\dfrac{\eps}{N}I_n\preceq0,
  \end{array}
\end{equation}
which implies that the conditions in the statement \eqref{st:CT:delay2} hold, thereby proving the desired result. The proof is completed.
\end{proof}

The above result is the semidefinite analogue of the results obtained for linear positive systems with discrete delays -- see e.g. \citep{Haddad:04}. The stability properties do not depend on the value of the delays and the system is exponentially stable for all possible values of the delays if and only if the system with zero-delays is exponentially stable.  It also admits an extension to the periodic systems case using a periodic Lyapunov-Krasovskii functional under the same assumptions as in \citep{Briat:19:IJC}. This extension is omitted for brevity.}

\subsection{Applications}

\blue{The objective of this section is to demonstrate that many results from the literature can be recovered and extended using the obtained results. To avoid repetitions, in all the sections below, $x,x_0\in\mathbb{R}^n$ denote the state of the system and the initial condition, $t_0\ge0$ is the initial time, $W(t)=(W_1(t),\ldots,W_{N}(t))$ is a vector of $N$ independent Wiener processes which are also independent of the state $x(t)$ and all the other signals possibly involved in the system. To any of the systems, we can associate a filtered probability space $(\Omega,\mathcal{F},\mathcal{F}_{t},\P)$ with natural filtration satisfying the usual conditions.

The first example pertains to the analysis of stochastic LTV systems with Poissonian jumps, the second one to the analysis of diffusion processes with delays, the third one is to the analysis of time-varying Markov jump linear systems, and the last one to the analysis and the control of time-varying sampled-data systems with Poissonian sampling.}

\subsubsection{Stochastic LTV systems with Poissonian jumps}

\black{Let us define the class of stochastic LTV with Poissonian jumps as
\begin{equation}\label{eq:syst:CT_general_LTV}
  \begin{array}{rcl}
    \d x(t)&=&A_0(t)x(t)\dt+\sum_{i=1}^{N}A_i(t)x(t)\dW_i(t)+\sum_{i=0}^N(M_i(t)-I)x(t)\d\eta_i(t),t\ge t_0\\
    x(t_0)&=&x_0
  \end{array}
\end{equation}
where $(\eta_0(t),\eta_1(t),\ldots,\eta_N(t))$ is a vector of $N+1$ independent of Poisson processes with rates $(\lambda_0,\lambda_1,\ldots,\lambda_N)$ and which are independent of $x(t)$, $W(t)$ and of the matrix-valued functions $A_i,M_i:\mathbb{R}_{\ge0}\to\mathbb{R}^{n\times n}$, $i=0,\ldots,N$, which are assumed to be piecewise continuous and bounded.}

Several stability notions exist for the system above \citep{Khasminskii:12} but we will be mostly interested in the uniform exponential mean-square stability defined below:
\black{\begin{definition}\label{def;MSS1}
  The system \eqref{eq:syst:CT_general_LTV} is uniformly exponentially mean-square stable if there exist some scalars $\alpha>0$ and $\beta\ge1$ such that
  \begin{equation}
    \E[||x(t)||_2^2]\le\beta e^{-\alpha(t-t_0)}\E[||x_0||_2^2]
  \end{equation}
  holds for all $t\ge t_0$, all $t_0\ge0$, and all $x_0\in\mathbb{R}^n$.
\end{definition}

One can observe that $\E[||x(t)||_2^2]=\trace\E[x(t)x(t)^{\T}]$ which means that this notion of stability is about the exponential convergence of the sum of the (nonnegative) eigenvalues of the \blue{seconder-order moment} matrix associated with the system \eqref{eq:syst:CT_general_LTV} to zero. This remark leads us to the following result:
\begin{theorem}
  The following statements are equivalent:
  \begin{enumerate}[(a)]
    \item The system \eqref{eq:syst:CT_general_LTV} is uniformly exponentially mean-square stable.
    \item There exist a differentiable matrix-valued function $P:\mathbb{R}_{\ge0}\mapsto\mathbb{S}^n_{\succ0}$ and a continuous matrix-valued function $Q:\mathbb{R}_{\ge0}\mapsto\mathbb{S}^n_{\succ0}$ such that
  \begin{equation}
    \begin{array}{rcccl}
      \alpha_1I&\preceq&P(t)&\preceq&\alpha_2I,\\
      \beta_1I&\preceq&Q(t)&\preceq&\beta_2I,
    \end{array}
  \end{equation}
    and such that Lyapunov differential equation
    \begin{equation}\label{eq:dkpspokdskpodkpokopsdzkpoko;a;dks;lkk}
    \begin{array}{rcl}
    \dot{P}(t)+\He[P(t)A_0(t)]+\sum_{i=1}^NA_i(t)^\T P(t)A_i(t)+\sum_{i=0}^N\lambda_i\Delta_i^*(t)=-Q(t)
    \end{array}
  \end{equation}
  where $\Delta_i^*(t):=M_i(t)^{\T}P(t)M_i(t)-P(t)$, $i=0,\ldots,N$, hold for some positive constants $\alpha_1,\alpha_2,\beta_1,\beta_2$ and for all $t\ge t_0$ and all $t_0\ge0$.
  \end{enumerate}
\end{theorem}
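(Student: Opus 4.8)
The plan is to reduce this statement to Theorem \ref{th:stabMatCT_LTV} by identifying the matrix-valued differential equation governing the second-order moment matrix $X(t):=\E[x(t)x(t)^{\T}]$ associated with the system \eqref{eq:syst:CT_general_LTV}. First I would derive the dynamics of $X(t)$ by applying the Dynkin/It\^o formula for jump-diffusion processes to the quadratic map $x\mapsto xx^{\T}$. The drift $A_0(t)x\,\dt$ produces the contribution $\He[A_0(t)X(t)]$; the Brownian terms $A_i(t)x\,\dW_i$ contribute $\sum_{i=1}^NA_i(t)X(t)A_i(t)^{\T}$ through the It\^o correction coming from the quadratic variation of the Wiener processes; and each Poisson jump, which sends $x\mapsto M_i(t)x$ at rate $\lambda_i$, contributes $\lambda_i(M_i(t)X(t)M_i(t)^{\T}-X(t))$. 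Taking expectations and invoking the mutual independence of the driving noises from the state and from the coefficients $A_i,M_i$ then yields the closed equation
\begin{equation}
\dot{X}(t)=\He[A_0(t)X(t)]+\sum_{i=1}^NA_i(t)X(t)A_i(t)^{\T}+\sum_{i=0}^N\lambda_i\left(M_i(t)X(t)M_i(t)^{\T}-X(t)\right).
\end{equation}

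Next I would observe that this is an instance of the generator \eqref{eq:operatorLt}: each jump term $\lambda_iM_iXM_i^{\T}=(\sqrt{\lambda_i}M_i)X(\sqrt{\lambda_i}M_i)^{\T}$ has exactly the form of an additional $A_jXA_j^{\T}$ summand, while $-\sum_{i=0}^N\lambda_iX$ is absorbed into the scalar coefficient $\mu(t)$. Consequently $X(t)$ solves a matrix-valued differential equation of the type \eqref{eq:matdiffeqCT_LTV}, whose adjoint generator, computed as in \eqref{eq:operatorLtadjoint}, is $\mathcal{C}_t^*(P)=\He[PA_0(t)]+\sum_{i=1}^NA_i(t)^{\T}PA_i(t)+\sum_{i=0}^N\lambda_i\Delta_i^*(t)$ with $\Delta_i^*(t)=M_i(t)^{\T}PM_i(t)-P$. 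The Lyapunov differential equation \eqref{eq:dkpspokdskpodkpokopsdzkpoko;a;dks;lkk} is therefore precisely the condition $\dot{P}(t)+\mathcal{C}_t^*(P(t))+Q(t)=0$ of statement \eqref{st:stabMatCT_LTV3} of Theorem \ref{th:stabMatCT_LTV}.

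It then remains to connect the two stability notions. Since $X(t)\succeq0$ by (the analogue of) Proposition \ref{prop:positivityCT}, we have $\E[||x(t)||_2^2]=\trace X(t)=||X(t)||_*$, so for a rank-one initial moment $X(t_0)=x_0x_0^{\T}$ Definition \ref{def;MSS1} is exactly the exponential decay $||X(t)||_*\le\beta e^{-\alpha(t-t_0)}||X(t_0)||_*$. To reach an arbitrary $X_0\in\psd^n$ I would write $X_0=\sum_j\theta_jz_jz_j^{\T}$ with $\theta_j\ge0$ and exploit the linearity of the flow together with the additivity of the trace on the cone, so that $||X(t)||_*=\sum_j\theta_j\trace X^{(j)}(t)\le\beta e^{-\alpha(t-t_0)}\sum_j\theta_j||z_jz_j^{\T}||_*=\beta e^{-\alpha(t-t_0)}||X_0||_*$, where $X^{(j)}$ denotes the solution issued from $z_jz_j^{\T}$. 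This shows that uniform exponential mean-square stability of \eqref{eq:syst:CT_general_LTV} is equivalent to uniform exponential stability of the matrix system in the sense of Definition \ref{def:CTsystemMat}.

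With these two reductions in hand the equivalence follows at once from Theorem \ref{th:stabMatCT_LTV}. I expect the only delicate points to be the rigorous derivation of the jump-diffusion moment equation (justifying that the compensated jump generator acting on $xx^{\T}$ produces the stated term, and that the expectation closes on $X(t)$ thanks to independence) and the promotion of the rank-one mean-square estimate to the entire cone $\psd^n$; everything else is a direct translation into the abstract framework already established in Theorem \ref{th:stabMatCT_LTV}.
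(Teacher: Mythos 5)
Your proposal is correct and follows essentially the same route as the paper: the paper's proof likewise writes down the closed second-order moment equation $\dot{X}(t)=\He[A_0(t)X(t)]+\sum_{i=1}^NA_i(t)X(t)A_i(t)^{\T}+\sum_{i=0}^N\lambda_i\Delta_i(t)$, identifies it as an instance of \eqref{eq:matdiffeqCT_LTV}, and invokes Theorem \ref{th:stabMatCT_LTV}. The only difference is that you make explicit two steps the paper leaves implicit --- the Dynkin/It\^o derivation of the moment dynamics and the promotion of the rank-one mean-square estimate to all of $\psd^n$ by linearity --- both of which are carried out correctly.
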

\begin{proof}
  The system \eqref{eq:syst:CT_general_LTV} is uniformly exponentially mean-square stable if and only if the dynamics of the \blue{seconder-order moment} matrix $X(t):=\E[x(t)x(t)^{\T}]$ given by
    \begin{equation}
      \begin{array}{rcl}
        \dot{X}(t)&=&A_0(t)X(t)+X(t)A_0(t)^{\T}+\sum_{i=1}^NA_i(t)X(t)A_i(t)^{\T}+\sum_{i=0}^N\lambda_i\Delta_i(t),\ t\ge t_0\\
        X(t_0)&=&X_0:=\E[x_0x_0^{\T}]
      \end{array}
    \end{equation}
    where $\Delta_i(t):=M_i(t)X(t)M_i(t)^{\T}-X(t)$, $i=0,\ldots,N$, is uniformly exponentially stable. Applying then Theorem \ref{th:stabMatCT_LTV} yields the result.
\end{proof}}

\blue{We can extend the system \eqref{eq:syst:CT_general_LTV} to include control inputs as
\begin{equation}\label{eq:syst:CT_general_LTV_u}
  \begin{array}{rcl}
    \d x(t)&=&(A_0(t)x(t)+B_0(t)u_0(t))\dt+\sum_{i=1}^{N}(A_i(t)x(t)+B_i(t)u_i(t))\dW_i(t)\\
    &&\qquad+\sum_{i=0}^N[(M_i(t)-I)x(t)+N_i(t)u_i(t)]\d\eta_i(t),t\ge t_0\\
    x(t_0)&=&x_0
  \end{array}
\end{equation}
where $u_i(\cdot)\in\mathbb{R}^{m_i}$, $i=0,\ldots,N$, and the matrix-valued functions $B_i,N_i:\mathbb{R}_{\ge0}\mapsto\mathbb{R}^{n\times m_i}$, $i=0,\ldots,N$, are assumed to be piecewise constant and bounded. Considering the control laws
\begin{equation}
  u_i(t)=K_i(t)x(t),\ i=0,\ldots,N
\end{equation}
and making the substitution $A_i\leftarrow A_i+B_iK_i$ and $M_i\leftarrow M_i+N_iK_i$ in \eqref{eq:dkpspokdskpodkpokopsdzkpoko;a;dks;lkk} yields an operator of the form \eqref{eq:operatorLtB1}. This leads us to the following result which can be seen as a generalization of the those in \citep{Briat:13d,Briat:15i}:
\begin{theorem}
  The following statements are equivalent:
  \begin{enumerate}[(a)]
    \item The system \eqref{eq:syst:CT_general_LTV_u} is uniformly exponentially mean-square stabilizable.
    \item There exist a differentiable matrix-valued function $Q:\mathbb{R}_{\ge0}\mapsto\mathbb{S}^n_{\succ0}$, $\alpha_1I\preceq P(t)\preceq\alpha_2I$, and matrix-valued functions $U_i:\mathbb{R}_{\ge0}\mapsto\mathbb{R}^{m_i\times n}$, $i=0,\ldots,N$, such that the Lyapunov differential inequality
    \begin{equation}
    \begin{bmatrix}
      -\dot{Q}(t)+\He[A_0(t)Q(t)+B_0(t)U_0(t)]-\sum_{i=0}^N\lambda_i Q(t) & \row_{i=1}^N[\Psi_i(t)^{\T}] & \row_{i=0}^N[\lambda_i\Phi_i(t)^{\T}]\\
      \star & -I_N\otimes Q(t) & 0\\
      \star & \star & -\diag_{i=0}^N(\lambda_i)\otimes Q(t)
    \end{bmatrix}\preceq -\alpha_3 I
  \end{equation}
  holds for some positive constants $\alpha_1,\alpha_2,\alpha_3$ and for all $t\ge t_0$ and all $t_0\ge0$ where $\Psi_i(t):=A_i(t)Q(t)+B_i(t)U_i(t)$ and $\Phi_i(t):=M_i(t)Q(t)+N_i(t)U_i(t)$.
  \end{enumerate}
\end{theorem}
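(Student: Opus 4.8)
The plan is to reduce this statement to the stabilization machinery already developed for Theorem \ref{th:CT:stabz}, since the closed-loop second-order moment dynamics fall exactly into the controlled-operator framework of \eqref{eq:operatorLtB1} augmented by the Poissonian jump terms. First I would close the loop: inserting $u_i(t)=K_i(t)x(t)$ into \eqref{eq:syst:CT_general_LTV_u} is equivalent to the replacements $A_i\leftarrow A_i+B_iK_i$ and $M_i\leftarrow M_i+N_iK_i$ in the moment equation. By the preceding mean-square stability theorem, the closed-loop system is uniformly exponentially mean-square stable if and only if there is a differentiable $P\colon\mathbb{R}_{\ge0}\to\pd^n$, uniformly bounded and bounded away from zero, with
\begin{equation}
\begin{aligned}
&\dot{P} + \He[P(A_0+B_0K_0)] + \sum_{i=1}^N (A_i+B_iK_i)^\T P (A_i+B_iK_i)\\
&\qquad + \sum_{i=0}^N \lambda_i\left[(M_i+N_iK_i)^\T P (M_i+N_iK_i) - P\right] \prec 0.
\end{aligned}
\end{equation}

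The second step is the linearizing change of variables, exactly as in Theorem \ref{th:CT:stabz}. Setting $Q:=P^{-1}$ and performing a congruence transformation of the above inequality by $Q$, I would use the identities $Q\dot{P}Q=-\dot{Q}$ and $QPQ=Q$ to rewrite the left-hand side as
\begin{equation}
-\dot{Q} + \He[(A_0+B_0K_0)Q] + \sum_{i=1}^N \Psi_i^\T Q^{-1}\Psi_i + \sum_{i=0}^N \lambda_i\left[\Phi_i^\T Q^{-1}\Phi_i - Q\right] \prec 0,
\end{equation}
where, under the substitution $U_i:=K_iQ$, the bracketed products collapse to $\Psi_i=(A_i+B_iK_i)Q=A_iQ+B_iU_i$ and $\Phi_i=(M_i+N_iK_i)Q=M_iQ+N_iU_i$, and $\He[(A_0+B_0K_0)Q]=\He[A_0Q+B_0U_0]$. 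This substitution is invertible because $Q\succ0$, so the gains are recovered as $K_i=U_iQ^{-1}$, and the uniform bounds $\alpha_1 I\preceq Q\preceq\alpha_2 I$ are inherited from the corresponding bounds on $P$.

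The final step removes the two quadratic $Q^{-1}$ terms by a Schur complement: grouping $\Psi_1,\ldots,\Psi_N$ and $\lambda_0\Phi_0,\ldots,\lambda_N\Phi_N$ into the off-diagonal rows $\row_{i=1}^N[\Psi_i^\T]$ and $\row_{i=0}^N[\lambda_i\Phi_i^\T]$, and introducing the negative-definite diagonal blocks $-I_N\otimes Q$ and $-\diag_{i=0}^N(\lambda_i)\otimes Q$, the displayed inequality becomes the block differential LMI of statement (b) with a strict margin; conversely, taking the Schur complement of that LMI and undoing the congruence reproduces the closed-loop inequality, which gives (a). I expect the main obstacle to be bookkeeping rather than conceptual: one must verify that the congruence and Schur complement both preserve strictness and the uniform $\alpha_3$-margin, that the $-\sum_{i=0}^N\lambda_i Q$ diagonal contribution is correctly tracked from the $-P$ part of each jump term, and—most delicately—that the Schur complement on the jump block is legitimate only when every rate satisfies $\lambda_i>0$, so that $\diag_{i=0}^N(\lambda_i)\otimes Q\succ0$; a vanishing rate should be handled by simply dropping the corresponding jump, which I would note explicitly.
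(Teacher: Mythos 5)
Your proposal is correct and follows essentially the same route as the paper: the paper's proof is the one-line application of Theorem \ref{th:CT:stabz} to the closed-loop second-order moment operator obtained via the substitutions $A_i\leftarrow A_i+B_iK_i$ and $M_i\leftarrow M_i+N_iK_i$, which is precisely the congruence-by-$Q=P^{-1}$, change of variables $U_i=K_iQ$, and Schur complement argument you spell out. Your added observation that the Schur complement on the jump block is legitimate only for $\lambda_i>0$ (vanishing rates being handled by dropping the corresponding block) is a valid refinement that the paper leaves implicit.
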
}
\begin{proof}
  This is a direct application of Theorem \ref{th:CT:stabz}.
\end{proof}

\subsubsection{A class of diffusion processes with delays}

\black{Let us consider the following class of diffusion systems with delays
\begin{equation}\label{eq:delaystoch}
\begin{array}{rcl}
  \dx(t)&=&A_0x(t)\dt+\sum_{i=1}^NA_ix(t-h_i)\dW_i(t)\\
  x(s)&=&\phi(s),\ s\in[-\bar h,0]
\end{array}
\end{equation}
where $\phi\in C([-\bar h,0],\mathbb{R}^n)$ is the initial condition. The notion of stability we will be interested here is also the mean square exponential stability which we adapt to systems with delays as
\begin{definition}
  The system \eqref{eq:delaystoch} is mean-square  exponentially stable if there exist some scalars $\alpha>0$ and $\beta>1$ such that
  \begin{equation}
    \E[||x(t)||_2^2]\le\beta e^{-\alpha t}\E\left[\max_{s\in[-\bar h,0]}||\phi(s)||_2^2\right]
  \end{equation}
  holds for all $t\ge0$ and all initial condition $\phi\in C([-\bar h,0],\mathbb{R}^n)$.
\end{definition}

Once again, this is related to the exponential convergence of the eigenvalues of the \blue{seconder-order moment} matrix, which then leads us to the following result that can be seen as a generalization of Theorem 5 in \citep{Tanaka:13b}:
\begin{theorem}
The following statements are equivalent:
\begin{enumerate}[(a)]
    \item The system \eqref{eq:delaystoch} is mean-square  exponentially stable for all $h_i\ge0$, $i=1,\ldots,N$.
    \item The system  \eqref{eq:delaystoch}  with zero-delays (i.e. with $h_i=0$, $i=1,\ldots,N$) is mean-square exponentially stable.
\end{enumerate}
\end{theorem}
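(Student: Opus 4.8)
The plan is to reduce the mean-square exponential stability of the stochastic delay system \eqref{eq:delaystoch} to the exponential stability of the matrix-valued delay equation governing its second-order moment, and then to invoke the delay-independent stability result of Theorem \ref{th:CT:delay}. The first step is to derive the dynamics of the second-order moment matrix $X(t):=\E[x(t)x(t)^\T]$ by applying It\^o's formula to $x(t)x(t)^\T$. The drift $A_0 x(t)$ contributes the term $\He[A_0 X(t)]$, while the diffusion terms, being driven by independent Wiener processes, contribute through their quadratic variation the term $\sum_{i=1}^N A_i\,\E[x(t-h_i)x(t-h_i)^\T]\,A_i^\T=\sum_{i=1}^N A_i X(t-h_i)A_i^\T$; the martingale part vanishes in expectation. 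Hence $X$ satisfies
\begin{equation}
  \dot{X}(t)=\He[A_0 X(t)]+\sum_{i=1}^N A_i X(t-h_i)A_i^\T,
\end{equation}
which is exactly an instance of the matrix-valued delay system \eqref{eq:matdiffeqCT_LTV:delay} with $\mu=0$, with the instantaneous diffusion terms absent (so that $\mathcal{C}(X)=\He[A_0 X]$), and with the delayed operators $\mathcal{H}_i(X)=A_i X A_i^\T$ obtained by the identification $B_i\leftarrow A_i$.

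Second, I would establish that the two stability notions coincide. Since $X(t)$ is a second-order moment matrix it remains in $\psd^n$ (as in Proposition \ref{prop:positivityCT}), so $\E[||x(t)||_2^2]=\trace\E[x(t)x(t)^\T]=\trace X(t)=||X(t)||_*$. For a deterministic initial function $\phi$ the matrix-valued history is the rank-one function $\Xi(s)=\phi(s)\phi(s)^\T$ with $||\Xi(s)||_*=||\phi(s)||_2^2$, so an exponential bound $||X(t)||_*\le\beta e^{-\alpha t}\max_{s}||\Xi(s)||_*$ for the matrix equation is literally the mean-square exponential stability of \eqref{eq:delaystoch}. Conversely, mean-square exponential stability yields such a bound for all rank-one initial data; writing an arbitrary $X_0\in\psd^n$ as a finite sum $\sum_k z_k z_k^\T$ and using the linearity of the flow together with the additivity of the trace over positive semidefinite summands, the same exponential bound extends to all $\psd^n$-valued initial conditions. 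Thus mean-square exponential stability of \eqref{eq:delaystoch}, for a given delay vector, is equivalent to the exponential stability of the matrix delay equation in the sense of Theorem \ref{th:CT:delay}.

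With these two equivalences in hand, the conclusion follows directly from Theorem \ref{th:CT:delay}: its statements \eqref{st:CT:delay1} and \eqref{st:CT:delay1b} assert that the matrix delay equation is exponentially stable for all $h_i\ge0$ if and only if its zero-delay counterpart is exponentially stable. Translating both sides back through the moment reduction gives precisely the claimed equivalence between statement (a) and statement (b). The implication from (a) to (b) is in any case immediate, since the zero-delay system is a special case.

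I expect the genuine content to lie in the reduction step rather than in the final invocation. The It\^o computation itself is routine, but two points deserve care: justifying that the quadratic variation of $\sum_{i} A_i x(t-h_i)\,\d W_i$ produces exactly $\sum_{i} A_i X(t-h_i) A_i^\T$ (which relies on the mutual independence of the $W_i$ so that the cross terms drop), and the rank-one decomposition argument that upgrades mean-square stability for deterministic initial conditions to exponential stability of the moment equation over all $\psd^n$-valued histories. Once these are settled, Theorem \ref{th:CT:delay} does all the remaining work.
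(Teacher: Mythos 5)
Your proposal is correct and takes essentially the same route as the paper: reduce \eqref{eq:delaystoch} to its second-order moment dynamics $\dot X(t)=\He[A_0X(t)]+\sum_{i=1}^N A_iX(t-h_i)A_i^{\T}$ and invoke Theorem \ref{th:CT:delay}. The paper's proof is a two-line version of yours (it even contains a transposition slip, writing $A_i^{\T}X(t-h_i)A_i$ where your $A_iX(t-h_i)A_i^{\T}$ is the correct form); your added care with the It\^o cross-terms and the rank-one decomposition linking mean-square stability to stability of the moment equation over all $\psd^n$-valued histories simply makes explicit what the paper leaves implicit.
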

\begin{proof}
    Let $X(t):=\E[x(t)x(t)^{\T}]$ to be the \blue{seconder-order moment} matrix of the system \eqref{eq:delaystoch}. The dynamics of this matrix is described by
\begin{equation}
  \dot{X}(t)=A_0X(t)+X(t)A_0^{\T}+\sum_{i=1}^NA_i^{\T}X(t-h_i)A_i.
\end{equation}
Applying now Theorem \ref{th:CT:delay} yields the result.
\end{proof}}

\subsubsection{Time-varying Markov jump linear systems}

Let us consider the following class of systems described by a linear stochastic differential equation subject to Markovian jumps
\begin{equation}\label{eq:CT:Markov}
\begin{array}{rcl}
  dx(t)&=&A_{\sigma(t)}(t)x(t)\dt+\sum_{i=1}^NB_{\sigma(t),i}(t)x(t)\dW_i(t),\ t\ge t_0\vspace{-3mm}\\
  x(t_0)&=&x_0,\\
  \sigma(t_0)&=&\sigma_0
\end{array}
\end{equation}
where  $\sigma_0$ is the initial value of the parameter $\sigma:[t_0,\infty)\mapsto\{1,\ldots,M\}$ which changes value following a finite Markov chain as
\begin{equation}\label{eq:CT:Markov2}
  \P(\sigma(t+h)=j|\sigma(t)=i)=\left\{\begin{array}{rcl}
    \pi_{ij}h+o(h),&& i\ne j,\\
    1+\pi_{ii}h+o(h),&& i=j,
  \end{array}\right.
\end{equation}
where $\pi_{ij}\ge0$ for all $i\ne j$ and $\pi_{ii}=-\textstyle \sum_{j|j\ne i}\pi_{ij}$. The matrix-valued functions $A_i:\mathbb{R}_{\ge0}\mapsto\mathbb{R}^{n\times n}$, $i=1,\ldots,M$, are assumed to be piecewise continuous and bounded.  \bblue{We consider here the concept of stability called \emph{uniformly exponential mean-square stability in conditioning} as defined in \citep{Dragan:02,Dragan:13}:
\begin{definition}\label{def;MSS1b}
  The system \eqref{eq:CT:Markov}\eqref{eq:CT:Markov2} is uniformly exponentially mean-square stable in conditioning if there exist some scalars $\alpha>0$ and $\beta\ge1$ such that
  \begin{equation}
    \E[||x(t)||_2^2|\sigma(t_0)]\le\beta e^{-\alpha(t-t_0)}\E[||x_0||_2^2]
  \end{equation}
  holds for all $t\ge t_0$, all $t_0\ge0$, all $x_0\in\mathbb{R}^n$ and all initial probability distribution $\pi(t_0)$ of the Markov process defined by \eqref{eq:CT:Markov2}.
\end{definition}

This leads to the following result\footnote{\bblue{It was brought to the attention of the author after publication that this result was previously published in \citep{Dragan:02,Dragan:13}.}} which can be seen as a generalization of the results in \citep{Ji:90,Costa:13}:}

\black{\begin{theorem}\label{th:Jump:CT}
  The system \eqref{eq:CT:Markov}-\eqref{eq:CT:Markov2} is \bblue{uniformly exponentially mean-square stable in conditioning} if and only if there exist differentiable matrix-valued functions $P_i:\mathbb{R}_{\ge0}\mapsto\pd^n$, $i=1,\ldots,M$, and continuous matrix-valued functions $Q_i:\mathbb{R}_{\ge0}\mapsto\pd^n$, $i=1,\ldots,M$, such that
  \begin{equation}
    \begin{array}{rcccl}
      \alpha_1I&\preceq&P_i(t)&\preceq&\alpha_2I,\\
      \beta_1I&\preceq&Q_i(t)&\preceq&\beta_2I,
    \end{array}
  \end{equation}
  and the coupled Lyapunov differential equations
\begin{equation}\label{eq:dl;skdl;kasdk;ldk;sdk;ak}
  \dot{P}_i(t)+A_i(t)^{\T}P_i(t)+P_i(t)A_i(t)+\sum_{j=1}^NB_{i,j}^{\T}P_i(t)B_{i,j}+\sum_{\substack{j=1\\j\ne i}}^M\pi_{ij}P_j(t)=-Q_i(t)
\end{equation}
hold for all $ i=1,\ldots,M$ and some $\alpha_1.\alpha_2,\beta_1,\beta_2>0$.
\end{theorem}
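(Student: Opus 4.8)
The plan is to reduce the mean-square stability of the Markov jump system to the deterministic uniform exponential stability of an aggregated, block-structured matrix-valued system of the type \eqref{eq:matdiffeqCT_LTV}, to which Theorem \ref{th:stabMatCT_LTV} then applies essentially verbatim. The only genuinely stochastic step is the derivation of the dynamics of the mode-conditioned second-order moments; everything afterward is bookkeeping with the adjoint machinery already developed in Section \ref{sec:CT}. First I would introduce, for each mode $i=1,\ldots,M$, the mode-conditioned second-order moment matrix
$$
X_i(t):=\E\left[x(t)x(t)^{\T}\ind_{\{\sigma(t)=i\}}\right]\in\psd^n,
$$
so that $\E[x(t)x(t)^{\T}]=\sum_{i=1}^M X_i(t)$ and hence, by Proposition \ref{prop:PQ}, statement \eqref{st:norm:4}, $\E[\,||x(t)||_2^2\,]=\trace\sum_{i=1}^M X_i(t)=\sum_{i=1}^M||X_i(t)||_*$. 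This identity is precisely what links the scalar stability notion of Definition \ref{def;MSS1} to the nuclear-norm decay of the stacked state $X:=(X_1,\ldots,X_M)$.

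Next I would derive the coupled differential equations for the $X_i$ by applying the extended (infinitesimal) generator of the joint Markov process $(x(t),\sigma(t))$ to the matrix-valued test functions $f_i(x,\sigma)=xx^{\T}\ind_{\{\sigma=i\}}$ and taking expectations. The diffusion part of \eqref{eq:CT:Markov} contributes, within mode $i$, the terms $A_i(t)X_i+X_iA_i(t)^{\T}+\sum_{j=1}^N B_{i,j}(t)X_iB_{i,j}(t)^{\T}$ via It\^o's formula, while the finite Markov chain \eqref{eq:CT:Markov2} contributes the linear coupling $\sum_{j=1}^M\pi_{ji}X_j$ (the rate of probability flux into mode $i$). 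This yields, for $i=1,\ldots,M$,
$$
\dot{X}_i(t)=A_i(t)X_i(t)+X_i(t)A_i(t)^{\T}+\sum_{j=1}^N B_{i,j}(t)X_i(t)B_{i,j}(t)^{\T}+\sum_{j=1}^M\pi_{ji}X_j(t).
$$
Because the $X_i$ are genuine conditional second moments they remain positive semidefinite, so the stacked generator $\mathcal{C}_t$ is a linear operator leaving the self-dual product cone $(\psd^n)^M$ invariant (the analogue of Proposition \ref{prop:positivityCT}). Consequently the proof of Theorem \ref{th:stabMatCT_LTV} carries over with the inner product $\langle P,X\rangle=\sum_{i}\langle P_i,X_i\rangle$, since that proof only uses linearity, cone invariance, and the positivity estimates of Proposition \ref{prop:PQ}, all of which hold blockwise.

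It then remains to compute the adjoint $\mathcal{C}_t^*$ and read off the statement. Transposition reverses the coupling direction of the rate matrix, so $\mathcal{C}_t^*$ acts on block $i$ as $A_i(t)^{\T}P_i+P_iA_i(t)+\sum_{j=1}^N B_{i,j}(t)^{\T}P_iB_{i,j}(t)+\sum_{j=1}^M\pi_{ij}P_j$, where the diagonal sojourn term $\pi_{ii}P_i$ together with the off-diagonal terms $\pi_{ij}P_j$ constitute the Markov coupling in \eqref{eq:dl;skdl;kasdk;ldk;sdk;ak}. Invoking Theorem \ref{th:stabMatCT_LTV}, statement \eqref{st:stabMatCT_LTV3}, the stacked system is uniformly exponentially stable if and only if there exist uniformly bounded and uniformly positive-definite $P=(P_1,\ldots,P_M)$ and $Q=(Q_1,\ldots,Q_M)$ solving $\dot P+\mathcal{C}_t^*(P)+Q=0$ blockwise, which is exactly the coupled system \eqref{eq:dl;skdl;kasdk;ldk;sdk;ak}; the equivalence with mean-square stability of $x$ follows from the norm identity above.

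The main obstacle I anticipate is the second-moment derivation rather than the operator-theoretic conclusion: one must apply the joint generator correctly so that the flow contribution and the jump (Markov) contribution are combined with the right rates and indices, and one must argue that mean-square stability of $x$ is equivalent to uniform exponential decay of the \emph{aggregate} moment $\sum_i X_i(t)$ in nuclear norm, not merely of individual blocks. Once the coupled moment dynamics are established and recognized as a cone-preserving instance of \eqref{eq:matdiffeqCT_LTV} on the product cone, the remainder is a direct transcription of Theorem \ref{th:stabMatCT_LTV}.
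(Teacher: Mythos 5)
Your proof is correct, and its skeleton matches the paper's: condition on the modes, define $X_i(t)=\E[x(t)x(t)^{\T}\mathds{1}_i(\sigma(t))]$, derive the coupled moment equations $\dot X_i=A_iX_i+X_iA_i^{\T}+\sum_j B_{i,j}X_iB_{i,j}^{\T}+\sum_{j}\pi_{ji}X_j$, and invoke Theorem \ref{th:stabMatCT_LTV}. Where you genuinely diverge is in the key technical step. The paper does \emph{not} re-run the Lyapunov argument on the product cone; instead it algebraically embeds the coupled system into the literal canonical form \eqref{eq:matdiffeqCT_LTV} on $\mathbb{S}^{nM}_{\succeq0}$: it stacks $X(t)=\diag_{i=1}^M X_i(t)$, absorbs the sojourn rates into $\bar A_0=\diag_i\{A_i+\tfrac{\pi_{ii}}{2}I\}$, and represents the off-diagonal coupling as congruence terms $\sum_j \bar A_j X\bar A_j^{\T}$ with $\bar A_j=(E_j\odot\bar\Pi)\otimes I_n$, $[\bar\Pi]_{ij}=\sqrt{\pi_{ji}}$ --- exploiting that the rates $\pi_{ji}$, $j\ne i$, are nonnegative so their square roots exist. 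Theorem \ref{th:stabMatCT_LTV} then applies verbatim, after which the paper argues that restricting $P,Q$ to block-diagonal form is without conservatism because the state actually evolves in $\psd^n\times\cdots\times\psd^n$ rather than $\mathbb{S}^{nM}_{\succeq0}$. You bypass this embedding entirely and instead observe that the proof of Theorem \ref{th:stabMatCT_LTV} uses only linearity, invariance of a self-dual cone, and the estimates of Proposition \ref{prop:PQ}, so it carries over blockwise on $(\psd^n)^M$ with $\langle P,X\rangle=\sum_i\langle P_i,X_i\rangle$; that claim is accurate (both the sufficiency computation and the necessity construction of $\bar P$ via $\int_t^\infty\langle Q(s),X(s)\rangle\ds$ go through, the latter needing only uniform boundedness of the generator to get $\tfrac{\d}{\d s}\langle Q,X\rangle\ge -c\langle Q,X\rangle$), but it is an extension of the stated theorem, not a citation of it, and a careful write-up would have to say so --- the stacked coupling $\sum_j\pi_{ji}X_j$ is not of the congruence form \eqref{eq:operatorLt} until one performs exactly the square-root trick the paper uses. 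The trade-off: the paper's route keeps Theorem \ref{th:stabMatCT_LTV} a black box at the price of the Hadamard/Kronecker construction plus the block-diagonal-restriction argument, while yours trades that algebra for a (legitimate) re-inspection of the main proof on a product cone. One further point in your favor: your adjoint computation yields the coupling $\sum_{j=1}^M\pi_{ij}P_j$ \emph{including} the sojourn term $\pi_{ii}P_i$, which is what the paper's own proof produces through $\bar A_0$ (and what the classical MJLS coupled Lyapunov equations contain); the displayed equation \eqref{eq:dl;skdl;kasdk;ldk;sdk;ak} in the statement, which sums only over $j\ne i$ with no visible $\pi_{ii}P_i$, appears to have dropped that term, so your version is the internally consistent one.
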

\begin{proof}
  Let $\mathds{1}_i(\sigma(t))$ be the indicator function of the state $i$ defined as $\mathds{1}_i(\sigma(t))=1$  if $\sigma(t)=i$, and 0 otherwise. Then, defining $X_i(t)=\E[x(t)x(t)^{\T}\mathds{1}_i(\sigma(t))]$, we get that
\begin{equation}\label{eq:dksalkd;lsakd;lksa;dksa;ldka;skd;}
  \dot{X}_i(t)=A_i(t)X_i(t)+X_i(t)A_i(t)^{\T}+\sum_{j=1}^NB_{i,j}(t)X_i(t)B_{i,j}(t)^{\T}+\sum_{j=1}^M\pi_{ji}X_j(t)
\end{equation}
for all $i=1,\ldots,M$. While it is readily seen that while $\E[x(t)x(t)^{\T}]=\textstyle\sum_{i=1}^MX_i(t)$, there is no closed-form expression for the derivative of the \blue{seconder-order moment} matrix associated with the system. Let $E_i$ be the matrix composed of the columns $\{e_1,\ldots,e_M\}$ in cyclic ascending order where $e_1$ is the $i$-th column and define $\bar \Pi$ as $[\bar \Pi]_{ij}=\sqrt{\pi_{ji}}$. Then, we have that
\begin{equation}
  \diag_{i=1}^M\left(\sum_{\substack{j=1\\ j\ne i}}^M\pi_{ji}X_j(t)\right)=\sum_{j=2}^M\bar{A}_jX(t)\bar{A}_j^{\T}
\end{equation}
where $\bar{A}_j= (E_i\odot\bar{\Pi})\otimes I_n$, $j=2,\ldots,M$. Using this expression, one can rewrite all the matrix-valued differential equations \eqref{eq:dksalkd;lsakd;lksa;dksa;ldka;skd;} in compact form as
\begin{equation}
  \dot{X}(t)=\bar{A}_0(t)X(t)+X(t)\bar{A}_0(t)^{\T}+\sum_{j=1}^N\bar B_{j}(t)X(t)\bar B_{j}(t)^{\T}+\sum_{j=2}^M\bar{A}_jX(t)\bar{A}_j^{\T}
\end{equation}
where $X(t):=\textstyle\diag_{i=1}^M\{X_i(t)\}$,
\begin{equation}
  \begin{array}{rclcrcl}
    \bar{A}_0(t)&=&\diag_{i=1}^M\left\{A_i(t)+\frac{\pi_{ii}}{2}I\right\},&\textnormal{and}&  \bar B_{j}(t)&=&\diag_{i=1}^M(B_{i,j}(t)),\\
  \end{array}
\end{equation}

Using Theorem \ref{th:stabMatCT_LTV}, we obtain the stability condition
\begin{equation}\label{eq:kdskl;dksa;ldkl;askd;lsk;dkasd;lsak}
  \dot{P}(t)+\bar{A}_0(t)^{\T}P(t)+P(t)\bar{A}_0(t)+\sum_{j=1}^N\bar B_{j}(t)^{\T}P(t)\bar B_{j}(t)+\sum_{j=1}^M\bar{A}_j^{\T}P(t)\bar{A}_j=-Q(t).
\end{equation}
As the state of this system has a block-diagonal structure, the state space is actually $\mathbb{S}^n_{\succeq0}\times\ldots\mathbb{S}^n_{\succeq0}$ ($M$ times) and not the full space $\mathbb{S}^{nM}_{\succeq0}$. This allows us to also restrict ourselves to block-diagonal matrices $P(t)=\textstyle\diag_{i=1}^M(P_i(t))$ and $Q(t)=\textstyle\diag_{i=1}^M(Q_i(t))$ which all satisfy the conditions of Theorem \ref{th:stabMatCT_LTV}. Exploiting the block-diagonal structure yields
\begin{equation}
  \sum_{j=2}^M\bar{A}_j^{\T}P(t)\bar{A}_j=\diag_{i=1}^M\left(\sum_{\substack{j=1\\j\ne i}}\pi_{ij}P_j(t)\right).
\end{equation}
together with  the conditions \eqref{eq:dl;skdl;kasdk;ldk;sdk;ak} after having expanded the other terms in \eqref{eq:kdskl;dksa;ldkl;askd;lsk;dkasd;lsak}.
\end{proof}}

\subsubsection{Time-varying sampled-data systems with Poissonian sampling}

\blue{
Let us consider here the following LTV stochastic process
\begin{equation}\label{eq:syst:Impulsive_SD1}
  \begin{array}{rcl}
    \d x(t)&=&(A_0(t)x(t)+B_0(t)u(t))\dt+\sum_{i=1}^{N}(A_i(t)x(t)+B_i(t)u(t))\d W_i(t)\\
    x(t^0)&=&x_0
  \end{array}
\end{equation}
where $A_i:\mathbb{R}_{\ge0}\mapsto\mathbb{R}^{n\times n}$ and $B_i:\mathbb{R}_{\ge0}\mapsto\mathbb{R}^{n\times m}$, $i=0,\ldots,N$, are assumed to be piecewise continuous and bounded. We also assume that this system is controlled using the following time-varying sample-data state-feedback control law
\begin{equation}\label{eq:syst:Impulsive_SD2}
  u(t)=K_1(t_k)x(t_k)+K_2(t_k)u(t_k),\ t\in(t_k,t_{k+1}].
\end{equation}
As for the other systems, $x,x_0\in\mathbb{R}^n$ are the state of the system and the initial condition, and $t^0\ge0$ is the initial time. The Wiener processes $W_1(t),\ldots,W_{N}(t)$ are assumed to be independent of each other and of the state $x(t)$. The matrix-valued functions describing the system are also assumed to be uniformly bounded. Let us now assume that the sampling instants $t_{k+1}-t_k$ is exponentially distributed with rate $\lambda>0$; i.e. $T_k\sim \textnormal{Exp}(\lambda)$.

\begin{theorem}
  The following statements are equivalent:
  \begin{enumerate}[(a)]
    \item The stochastic LTV sampled-data system \eqref{eq:syst:Impulsive_SD1}-\eqref{eq:syst:Impulsive_SD2} with Poissonian sampling with rate $\lambda>0$ is uniformly globally exponentially stable.
    \item There exists a differentiable matrix-valued function $P:\mathbb{R}_{\ge0}\mapsto\mathbb{S}^n_{\succ0}$, $\alpha_1I\preceq P(t)\preceq\alpha_2I$, such that the differential matrix inequality
    \begin{equation}
    \begin{array}{rcl}
    \dot{P}(t)+\He[P(t)\bar A_0(t)]+\sum_{i=1}^N\bar A_i(t)^\T P(t)\bar A_i(t)+\lambda(\bar J(t)^{\T}P(t)\bar J(t)-P(t))\preceq-\alpha_3I
    \end{array}
  \end{equation}
  holds for some positive constants $\alpha_1,\alpha_2,\alpha_3$ and for all $t\ge t_0$ and all $t_0\ge0$ where
  \begin{equation}
    \bar A_0(t):=\begin{bmatrix}
      A_0(t) & B_0(t)\\
      0 & 0
    \end{bmatrix},\ \bar A_i(t):=\begin{bmatrix}
      A_i(t) & B_i(t)\\
      0 & 0
    \end{bmatrix},\ \bar J(t):=\begin{bmatrix} I & 0\\
      K_1(t) & K_2(t)
    \end{bmatrix}.
  \end{equation}
  \item There exist a differentiable matrix-valued function $Q:\mathbb{R}_{\ge0}\mapsto\mathbb{S}^n_{\succ0}$,  $\alpha_1I\preceq Q(t)\preceq\alpha_2I$, and a matrix-valued function $U:\mathbb{R}_{\ge0}\mapsto\mathbb{R}^{m\times (n+m)}$, $i=1,\ldots,N$, such that the differential matrix inequality
    \begin{equation}
    \begin{bmatrix}
      -\dot{Q}(t)+\He[\bar A_0(t)Q]-\lambda Q(t) & \lambda\left(\bar{J}_0(t)Q(t)+\bar{J}_b(t)U(t)\right)^{\T} & \row_{i=1}^N[Q(t)A_i(t)^{\T}]\\
      \star & -\lambda Q(t) & 0\\
      \star & \star & -I_n\otimes Q(t)
    \end{bmatrix}\preceq-\alpha_3I
  \end{equation}
  holds for some positive constants $\alpha_1,\alpha_2,\alpha_3$ and for all $t\ge t_0$ and all $t_0\ge0$ where
    \begin{equation}
    \bar J_0(t):=\begin{bmatrix}
      I_n & 0\\
      0 & 0
    \end{bmatrix},\ \bar J_b(t):=\begin{bmatrix}
      0\\
      I_m
    \end{bmatrix}.
  \end{equation}
  Moreover, suitable gains are given by $\begin{bmatrix}
    K_1(t) & K_2(t)
  \end{bmatrix}=U(t)Q(t)^{-1}$.
  \end{enumerate}
\end{theorem}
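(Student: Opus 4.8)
\subsection*{Proof proposal}

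The plan is to lift the sampled-data system into an augmented jump-diffusion on the state $\xi:=\col(x,u)\in\R^{n+m}$ so that Theorems \ref{th:stabMatCT_LTV} and \ref{th:CT:stabz} apply almost verbatim. Between two consecutive sampling instants the control is frozen, so $u$ obeys $\d u=0$ while $x$ follows the given stochastic differential equation; hence on each inter-sample interval the augmented state satisfies $\d\xi=\bar A_0(t)\xi\,\dt+\sum_{i=1}^N\bar A_i(t)\xi\,\dW_i$ with $\bar A_0,\bar A_i$ as in the statement. At each sampling instant the feedback \eqref{eq:syst:Impulsive_SD2} resets $u$ to $K_1x+K_2u$ while leaving $x$ unchanged, which is exactly the linear jump $\xi^+=\bar J(t)\xi$. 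Since the inter-sample times are $\textnormal{Exp}(\lambda)$, these resets occur along a homogeneous Poisson clock of rate $\lambda$, so $\xi$ is a jump-diffusion of precisely the type treated in the first application example.

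First I would compute the dynamics of the second-order moment $X(t):=\E[\xi(t)\xi(t)^{\T}]$ through the generator of this jump-diffusion: the drift and It\^o terms produce $\He[\bar A_0X]$ and $\sum_i\bar A_iX\bar A_i^{\T}$, while the Poissonian resets contribute the rate-weighted difference $\lambda(\bar JX\bar J^{\T}-X)$. This yields a matrix-valued differential equation of the form \eqref{eq:matdiffeqCT_LTV}, with $\sqrt{\lambda}\,\bar J$ playing the role of an extra $A_i$ and $\mu\equiv-\lambda$, and mean-square exponential stability of the closed loop is equivalent to exponential stability of this moment equation since $\E[\lVert\xi\rVert_2^2]=\trace X$. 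Reading off the adjoint $\mathcal{C}_t^*$ of this operator and invoking the Lyapunov differential inequality of Theorem \ref{th:stabMatCT_LTV}\eqref{st:stabMatCT_LTV4} then gives exactly condition (b), establishing (a) $\Leftrightarrow$ (b).

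For (b) $\Leftrightarrow$ (c) I would mimic the congruence-and-Schur argument of Theorem \ref{th:CT:stabz}. Both quadratic-in-$P$ terms, namely $\sum_i\bar A_i^{\T}P\bar A_i$ and $\lambda\bar J^{\T}P\bar J$, are pulled out by Schur complements into two off-diagonal blocks whose diagonal entries are $-I_N\otimes P$ and $-\lambda^{-1}P$. Performing the congruence transformation with $T:=\diag(Q,\lambda Q,I_N\otimes Q)$, where $Q:=P(t)^{-1}$, turns the $(1,1)$ block into $-\dot Q+\He[\bar A_0Q]-\lambda Q$ (using $Q\dot PQ=-\dot Q$), the diffusion block into $\row_i[Q\bar A_i^{\T}]$, and the jump block into $\lambda Q\bar J^{\T}$. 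The decisive step is the linearization of this last block: writing $\bar J=\bar J_0+\bar J_b\begin{bmatrix}K_1 & K_2\end{bmatrix}$ and setting $U:=\begin{bmatrix}K_1 & K_2\end{bmatrix}Q$ gives $Q\bar J^{\T}=(\bar J_0Q+\bar J_bU)^{\T}$, which is affine in the decision pair $(Q,U)$ and is precisely the $(1,2)$ block of (c); the gains are then recovered as $\begin{bmatrix}K_1 & K_2\end{bmatrix}=UQ^{-1}$. Boundedness of $Q$ guarantees that the strict bound $\preceq-\alpha_3I$ is preserved under the congruence with a possibly rescaled $\alpha_3$.

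The routine parts are the generator computation and the Schur/congruence bookkeeping. I expect the main obstacle to be the rigorous justification of the first reduction, namely that mean-square exponential stability of the genuinely hybrid, stochastically sampled closed loop is equivalent to exponential stability of the deterministic moment equation. This requires arguing that freezing $u$ between jumps together with an $\textnormal{Exp}(\lambda)$ sampling law produces exactly a homogeneous Poisson reset clock, and that the second moment closes, so that $X(t)$ obeys an autonomous linear matrix ordinary differential equation; once this is in place, both equivalences follow by direct appeal to the already-established theorems.
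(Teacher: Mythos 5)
Your proposal is correct and follows essentially the same route as the paper: rewrite \eqref{eq:syst:Impulsive_SD1}-\eqref{eq:syst:Impulsive_SD2} as an augmented jump-diffusion in $\col(x,u)$, derive the closed second-order moment equation $\dot{X}=\He[\bar A_0X]+\sum_{i=1}^N\bar A_iX\bar A_i^{\T}+\lambda(\bar JX\bar J^{\T}-X)$, and then invoke Theorem \ref{th:stabMatCT_LTV} for statement (b) and the congruence/Schur/change-of-variables machinery of Theorem \ref{th:CT:stabz} for statement (c). Your Schur-complement bookkeeping (in particular $\lambda Q\bar J^{\T}P\bar JQ=(\lambda\bar JQ)^{\T}(\lambda Q)^{-1}(\lambda\bar JQ)$ and $\bar JQ=\bar J_0Q+\bar J_bU$) is exactly what the paper's citation of Theorem \ref{th:CT:stabz} encapsulates, so no further comment is needed.
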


\begin{proof}
  The  system \eqref{eq:syst:Impulsive_SD1}-\eqref{eq:syst:Impulsive_SD2} can be reformulated as the following impulsive system
\begin{equation}
  \d z(t)=\bar{A}_0(t)z(t)\dt+\sum_{i=1}^N\bar{A}_i(t)z(t)\d W_i(t)
\end{equation}
and
\begin{equation}
  z(t_k^+)=\bar J(t_k)z(t_k)
\end{equation}
where $\bar A_i(t)$, $i=1,\ldots,N$, and $\bar J(t_k)$ are defined in the result. The seconder-order moment system associated with the above impulsive system is given by
\begin{equation}
  \dot{X}(t)=\bar{A}_0(t)X(t)+X(t)\bar{A}_0(t)^{\T}+\sum_{i=1}^N\bar{A}_i(t)X(t)\bar{A}_i(t)^{\T}+\lambda(\bar J(t)X(t)\bar J(t)^{\T}-X(t)).
\end{equation}
Applying then Theorem \ref{th:stabMatCT_LTV} yields the result of the second statement whereas the use of Theorem \ref{th:CT:stabz} leads to the third statement.
\end{proof}
}

\section{Linear Matrix-Valued Symmetric Discrete-Time Systems}\label{sec:DT}

\blue{This section is devoted to the definition of linear matrix-valued symmetric discrete-time systems that evolves on the cone of positive semidefinite matrices together with their associated generators and state-transition operator. Using those definitions a necessary and sufficient condition for their uniform exponential stability is obtained in Theorem \ref{th:stabMatDT_LTV} in terms of a matrix-valued difference equation or a matrix-valued difference inequality. This result is then specialized to the LTI case in Theorem \ref{th:stabMatDT_LTI} where additional spectral conditions are also obtained. Convex stabilizability and stabilization conditions are also derived in Theorem \ref{th:DT:stabz} and take the form of difference linear matrix inequalities. The case of systems with delays is also considered  and a necessary and sufficient stability condition, generalizing existing ones, is also obtained in Theorem \ref{th:DT:delay}. As in the continuous-time case, the simplicity and the versatility of the approach as a general tool for the analysis of (stochastic) discrete-time systems is illustrated through several examples, namely on linear time-varying stochastic systems subject to difference types of stochastic finite-variance parameters, linear time-varying stochastic systems with delays, and time-varying systems with Markov jump parameters.}

\subsection{Preliminaries}

Let us consider here the following class of matrix-valued discrete-time dynamical systems
\begin{equation}\label{eq:matdiffeqDT_LTV}
\begin{array}{rcl}
  X(k+1)&=&\sum_{i=0}^NJ_i(k) X(k)J_i(k)^\T,k\ge k_0\\
  X(k_0)&=&X_0\in\psd^n
\end{array}
\end{equation}
where the matrix-valued functions $J_i:\mathbb{Z}_{\ge0}\mapsto\mathbb{R}^{n\times n}$, $i=0,\ldots,N$, are bounded. It is clear from the above expression that there is a unique solution to this difference equation, which we denote by $X(k,k_0,X_0)$. This solution can be written as
\begin{equation}\label{eq:PHIDTMATLTV}
  X(k)=\mathcal{D}_{k,\ell}(X_\ell),\ k\ge\ell\ge k_0
\end{equation}
where
\begin{equation}\label{eq:PHIDTMATLTV2}
\mathcal{D}_{k,\ell}(X)=\left\{\begin{array}{lcl}
  X&&\textnormal{if }k=\ell\\
  \mathcal{D}_\ell(X)&&\textnormal{if }k=\ell+1\\
  \left(\mathcal{D}_{k-1}\circ\ldots\circ\mathcal{D}_\ell\right)(X)&&\textnormal{if }k>\ell+1
\end{array}\right.
\end{equation}
and
\begin{equation}\label{eq:operatorLdt}
  \begin{array}{rccl}
    \mathcal{D}_k:&\sm^n&\mapsto&\sm^n\\
    & X&\mapsto & \sum_{i=0}^NJ_i(k) XJ_i(k)^\T.
  \end{array}
\end{equation}
The adjoint of the operator $\mathcal{D}_k$, denoted by $\mathcal{D}_k^*$,  is given by
\begin{equation}\label{eq:operatorLdtadjoint}
  \begin{array}{rccl}
    \mathcal{D}^*_k:&\sm^n&\mapsto&\sm^n\\
    & X&\mapsto & \sum_{i=0}^NJ_i(k)^\T XJ_i(k).
  \end{array}
\end{equation}

\black{The following result, which is the discrete-time analogue of Proposition \ref{prop:positivityCT}, shows that the system  \eqref{eq:matdiffeqDT_LTV}  leaves the cone $\mathbb{S}^n_{\succeq0}$ invariant:
\begin{proposition}\label{prop:positivityDT}
  The matrix-valued difference equation \eqref{eq:matdiffeqDT_LTV} leaves the cone of positive semidefinite matrices invariant, that is, for any $X_0\succeq0$, we have that $X(k,k_0,X_0)\succeq0$ for all $k\ge k_0$ and all $k_0\ge0$.
\end{proposition}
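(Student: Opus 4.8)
The plan is to prove invariance directly from the explicit form of the solution, exactly mirroring the continuous-time argument in Proposition \ref{prop:positivityCT}. The key structural observation is that each operator $\mathcal{D}_k$ is a sum of congruence-type transformations $X\mapsto J_i(k)XJ_i(k)^{\T}$, and each such map manifestly preserves positive semidefiniteness: if $X\succeq0$ then for any vector $v$ we have $v^{\T}J_i(k)XJ_i(k)^{\T}v=(J_i(k)^{\T}v)^{\T}X(J_i(k)^{\T}v)\ge0$, so $J_i(k)XJ_i(k)^{\T}\succeq0$. Since a finite sum of positive semidefinite matrices is positive semidefinite, it follows immediately that $\mathcal{D}_k$ maps $\psd^n$ into itself for every $k$.

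First I would establish this one-step invariance as the base case: if $X(k)\succeq0$, then $X(k+1)=\mathcal{D}_k(X(k))=\sum_{i=0}^{N}J_i(k)X(k)J_i(k)^{\T}\succeq0$. Then I would conclude by induction on $k$. The induction is entirely routine because the solution \eqref{eq:PHIDTMATLTV} is generated by iterated composition of the maps $\mathcal{D}_k$ as recorded in \eqref{eq:PHIDTMATLTV2}; starting from $X(k_0)=X_0\succeq0$, each application of $\mathcal{D}_k$ preserves membership in the cone, so $X(k,k_0,X_0)\succeq0$ for all $k\ge k_0$. Formally, one writes $X(k+1)=\mathcal{D}_k(X(k))$ and applies the base-case observation at each step.

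The discrete-time case is in fact strictly simpler than its continuous-time counterpart: there is no need to invoke an integral representation or a state-transition matrix $\Phi$, since the solution is already given as a finite composition of congruence sums rather than as the limit of a flow. There is also no subtlety about whether the result yields strict or non-strict definiteness — the cone $\psd^n$ is closed under these operations and under finite sums, so non-strict positive semidefiniteness is exactly what propagates. Consequently I do not anticipate a genuine obstacle; the only point requiring minimal care is stating the induction cleanly so that the claim holds for all initial times $k_0\ge0$ and all $k\ge k_0$, which is immediate once the single-step invariance is in hand.
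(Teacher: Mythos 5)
Your proof is correct, and it is essentially the argument the paper intends: the paper omits the proof of Proposition \ref{prop:positivityDT} entirely, stating only that it ``follows from the same lines'' as Proposition \ref{prop:positivityCT}, and your one-step observation that each congruence $X\mapsto J_i(k)XJ_i(k)^{\T}$ preserves $\psd^n$, combined with closure of the cone under finite sums and a routine induction along \eqref{eq:PHIDTMATLTV2}, is precisely the natural discrete-time instantiation of that remark. Your side comment is also accurate: unlike the continuous-time proof, which needs the variation-of-constants representation with the state-transition matrix $\Phi$, the discrete case requires no such device since the solution is already a finite composition of cone-preserving maps.
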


\begin{proof}
  The proof follows from the same lines as the proof of Proposition \ref{prop:positivityCT} and is thus omitted.
\end{proof}

\begin{definition} \label{def:DTsystemMat}
  The zero solution of the  system \eqref{eq:matdiffeqDT_LTV} is \textbf{globally uniformly exponentially stable with rate $\rho$} if there exist some constants $\beta\ge1$ and $\rho\in(0,1)$ such that
  \begin{equation}
                  ||X(k,k_0,X_0)||\le\beta\rho^{k-k_0}||X_0||
  \end{equation}
  for all $k\ge k_0$, all $k_0\in\Znn$, and all $X_0\in\psd^{n}$.
\end{definition}}

\subsection{Stability Analysis}

\black{With the previous definitions and results in mind, we are now able to state the following result\footnote{It was brought to the attention of the author after publication of the paper that this result can be seen as a particular case of Theorem 2.4 in \cite{Dragan:10}.} characterizing the uniform exponential stability of the system \eqref{eq:matdiffeqDT_LTV}:

\begin{theorem}[LTV Case]\label{th:stabMatDT_LTV}
  The following statements are equivalent:
  \begin{enumerate}[(a)]
  \item\label{st:stabMatDT_LTV1} The system \eqref{eq:matdiffeqDT_LTV} is uniformly exponentially stable.
    %
    %
    \item\label{st:stabMatDT_LTV3}  There exist some matrix-valued functions $P,Q:\mathbb{Z}_{\ge0}\mapsto\mathbb{S}^n_{\succ0}$ such that
    \begin{equation}
    \begin{array}{rcccl}
      \alpha_1I&\preceq& P(k)&\preceq &\alpha_2I,\\
      \beta_1I&\preceq &Q(k)&\preceq &\beta_2I,
    \end{array}
    \end{equation}
    and such that \textbf{Lyapunov difference equation}
    \begin{equation}
    \mathcal{D}_k^*(P(k+1))-P(k)+Q(k)=0
  \end{equation}
  hold for some positive constants $\alpha_1,\alpha_2,\beta_1,\beta_2$ and for all $k\ge0$.
    \item\label{st:stabMatDT_LTV4}  There exists a matrix-valued function $P:\mathbb{Z}_{\ge0}\mapsto\mathbb{S}^n_{\succ0}$ such that the
    \begin{equation}
      \alpha_1I\preceq P(k)\preceq \alpha_2I
    \end{equation}
    and such that \textbf{Lyapunov difference inequality}
    \begin{equation}
    \mathcal{D}_k^*(P(k+1))-P(k)\preceq-\alpha_3 I
  \end{equation}
  hold for some positive constants $\alpha_1,\alpha_2,\alpha_3$ and for all $k\ge0$.
  \end{enumerate}
\end{theorem}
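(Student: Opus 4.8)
The plan is to mirror the continuous-time proof of Theorem \ref{th:stabMatCT_LTV} as closely as possible, exploiting the discrete-time Lyapunov machinery built on the adjoint operator $\mathcal{D}_k^*$. First I would dispatch the trivial equivalence: statement \eqref{st:stabMatDT_LTV3} and statement \eqref{st:stabMatDT_LTV4} are equivalent, since if the difference equation holds with $Q(k)$ bounded below by $\beta_1 I$ then the inequality holds with $\alpha_3=\beta_1$, and conversely, given the inequality, one simply defines $Q(k):=P(k)-\mathcal{D}_k^*(P(k+1))\succeq\alpha_3 I$, which is continuous and uniformly bounded above because $P$ and the $J_i(k)$ are bounded.

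For the implication from \eqref{st:stabMatDT_LTV3} to \eqref{st:stabMatDT_LTV1}, I would set $V(k,X)=\langle P(k),X\rangle$ and invoke Proposition \ref{prop:PQ} to conclude that $V$ is $\psd^n$-copositive definite, radially unbounded and decrescent, with $\alpha_1\|X\|_*\le V(k,X)\le\alpha_2\|X\|_*$. The key computation is the forward difference along trajectories: using $X(k+1)=\mathcal{D}_k(X(k))$ and the adjoint relation $\langle P(k+1),\mathcal{D}_k(X)\rangle=\langle\mathcal{D}_k^*(P(k+1)),X\rangle$, one gets
\begin{equation}
  V(k+1,X(k+1))-V(k,X(k))=\langle\mathcal{D}_k^*(P(k+1))-P(k),X(k)\rangle=-\langle Q(k),X(k)\rangle.
\end{equation}
Bounding $\langle Q(k),X(k)\rangle\ge(\beta_1/\alpha_2)\langle P(k),X(k)\rangle$ via Proposition \ref{prop:PQ}\eqref{st:norm:4} gives $V(k+1,X(k+1))\le(1-\beta_1/\alpha_2)V(k,X(k))$, so with $\rho:=1-\beta_1/\alpha_2\in(0,1)$ the function $V$ is an exponential Lyapunov function and uniform exponential stability follows. (I would note that $\beta_1/\alpha_2<1$ holds automatically since the inequality forces $\mathcal{D}_k^*(P(k+1))\preceq P(k)$.)

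The main work, as in the continuous-time case, is the converse \eqref{st:stabMatDT_LTV1} $\Rightarrow$ \eqref{st:stabMatDT_LTV3}, where one must construct a valid $P$ from the assumed stability. Here the integral of the continuous case becomes a series: fixing any $Q:\mathbb{Z}_{\ge0}\mapsto\pd^n$ with $\beta_1 I\preceq Q(k)\preceq\beta_2 I$, I would define $\bar P(k)$ through
\begin{equation}
  \langle\bar P(k),X(k)\rangle=\sum_{\ell=k}^\infty\langle Q(\ell),X(\ell)\rangle=\sum_{\ell=k}^\infty\langle Q(\ell),\mathcal{D}_{\ell,k}(X(k))\rangle,
\end{equation}
which is well defined and linear in $X(k)$, hence defines a unique symmetric $\bar P(k)$. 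The upper bound $\bar P(k)\preceq\alpha_2 I$ follows immediately from the exponential decay $\|\mathcal{D}_{\ell,k}(X)\|_*\le\beta\rho^{\ell-k}\|X\|_*$, summing the geometric series to $\beta_2\beta/(1-\rho)$.

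The lower bound on $\bar P(k)$ is the step I expect to be the genuine obstacle, and the discrete analogue of the continuous argument is actually cleaner. Rather than differentiating $Q$, I would simply isolate the first term of the series: since every summand is of the form $\langle Q(\ell),X(\ell)\rangle\ge0$, the tail is nonnegative and $\langle\bar P(k),X(k)\rangle\ge\langle Q(k),X(k)\rangle\ge\beta_1\|X(k)\|_*$, giving $\bar P(k)\succeq\beta_1 I=:\alpha_1 I$ directly, with no boundedness assumption on any difference of $Q$. Finally I would verify the Lyapunov difference equation: from the defining series, $\langle\bar P(k),X(k)\rangle-\langle Q(k),X(k)\rangle=\sum_{\ell=k+1}^\infty\langle Q(\ell),X(\ell)\rangle=\langle\bar P(k+1),X(k+1)\rangle=\langle\bar P(k+1),\mathcal{D}_k(X(k))\rangle=\langle\mathcal{D}_k^*(\bar P(k+1)),X(k)\rangle$, so that $\langle\mathcal{D}_k^*(\bar P(k+1))-\bar P(k)+Q(k),X(k)\rangle=0$ for all $X(k)\succeq0$; by Proposition \ref{prop:PQ}\eqref{st:norm:3} the bracketed symmetric matrix vanishes, establishing statement \eqref{st:stabMatDT_LTV3} with $P=\bar P$ and the chosen $Q$. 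The one point requiring care is that these identities are asserted for trajectories $X(\cdot)$ emanating from an arbitrary $X(k)\succeq0$, so the conclusion that the operator identity holds pointwise in $k$ relies on $X(k)$ ranging over all of $\psd^n$, which it does.
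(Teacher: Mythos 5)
Your proposal is correct and takes essentially the same route as the paper's proof: the same Lyapunov function $V(k,X)=\langle P(k),X\rangle$ with the adjoint identity and the bound $\langle Q(k),X\rangle\ge(\beta_1/\alpha_2)\langle P(k),X\rangle$ for sufficiency, and for necessity the same series construction $\langle\bar P(k),X\rangle=\sum_{\tau= k}^{\infty}\langle Q(\tau),\mathcal{D}_{\tau,k}(X)\rangle$ with the geometric-series upper bound and, exactly as in the paper, the first-term isolation $\langle\bar P(k),X\rangle\ge\langle Q(k),X\rangle\ge\beta_1\|X\|_*$ for the lower bound (so what you present as your own simplification of the continuous-time differentiability detour is in fact the paper's argument too). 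The only cosmetic caveat is that your parenthetical claim $\beta_1/\alpha_2<1$ should read $\beta_1/\alpha_2\le 1$, since equality can occur (forcing $\mathcal{D}_k^*(P(k+1))=0$, i.e.\ deadbeat behavior), which does not affect the stability conclusion.
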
}

\begin{proof}
\black{It is immediate to see that the statements \eqref{st:stabMatDT_LTV3} and \eqref{st:stabMatDT_LTV4} are equivalent.\\

\noindent\textbf{Proof that the statement (b) implies the statement (a).} Define the function $V(k,X)=\langle P(k),X\rangle$ where $P(\cdot)$ satisfies the conditions of statement (b). Since $P(k)$ is positive definite, uniformly bounded and uniformly bounded away from 0, then, from Proposition \ref{prop:PQ}, we have that $V$ is $\psd^n$-copositive definite, radially unbounded and decrescent. Evaluating the successive difference of the function $V$ along the trajectories of the system \eqref{eq:matdiffeqDT_LTV} yields
\begin{equation}
  \begin{array}{rcl}
    V(k+1,X(k+1)-V(k,X(k)))&=&\langle P(k+1),X(k+1)\rangle-\langle P(k),X(k)\rangle\\
                                &=&\langle P(k+1),\mathcal{D}_k(X(k))\rangle-\langle P(k),X(k)\rangle\\
                                &=&\langle \mathcal{D}^*_k(P(k+1))-P(k),X(k)\rangle\\
                                &=&-\langle Q(k),X(k)\rangle\\
                                &\le&-\dfrac{\beta_1}{\alpha_2}\langle P(k),X(k)\rangle=-\dfrac{\beta_1}{\alpha_2}V(k,X(k)).
  \end{array}
\end{equation}
Therefore, $V$ is a global, uniform and exponential Lyapunov function for the system \eqref{eq:matdiffeqDT_LTV}, which proves that the system \eqref{eq:matdiffeqDT_LTV} is globally uniformly exponentially stable with rate $1-\beta_1/\alpha_2$.}\\

\black{\noindent\textbf{Proof that the statement \eqref{st:stabMatDT_LTV1} implies the statement \eqref{st:stabMatDT_LTV3}.} Assume that the system \eqref{eq:matdiffeqDT_LTV} is uniformly exponentially stable, then by definition there exist some $M\ge1$ and $\rho\in(0,1)$ such that
\begin{equation}
  ||\mathcal{D}_{k,k_0}(X)||_*\le M\rho^{k-k_0}||X||_*
\end{equation}
holds for all $k\ge k_0$, all $k_0\ge0$, and all $X\in\psd^n$. Now let $Q:\mathbb{Z}_{\ge0}\mapsto\pd^n$ be such that $\beta_1I\preceq Q(k)\preceq
\beta_2$ for some $\beta_1,\beta_2>0$ and define $\bar{P}:\mathbb{Z}_{\ge0}\mapsto\pd^n$ such that it verifies
  \begin{equation}
    \langle\bar{P}(k),X\rangle=\sum_{\tau=k}^\infty\langle Q(\tau),\mathcal{D}_{\tau,k}(X)\rangle
  \end{equation}
  for all $X\in\psd^n$ and all $k\ge0$, where $\mathcal{D}_{k,\tau}$ is defined in \eqref{eq:PHIDTMATLTV}, \eqref{eq:PHIDTMATLTV2}. We need to prove first that there exist scalars $\alpha_1,\alpha_2>0$ such that $\alpha_1I\preceq P(k)\preceq \alpha_2I$ for all $k\ge0$.

  Clearly, we have that
  \begin{equation}
    \begin{array}{rcl}
        \langle\bar{P}(k),X\rangle&=&\lim_{K\to\infty}\sum_{\tau=k}^K\langle Q(\tau),\mathcal{D}_{\tau,k}(X)\rangle\\
        &\le&\beta_2\lim_{K\to\infty}\sum_{\tau=k}^K\langle \mathcal{D}_{\tau,k}(X))\\
      &\le&\beta_2M\left(\lim_{K\to\infty}\sum_{\tau=k}^K\rho^{\tau-k}\right)||X||_*\\
      &=& \beta_2M\left(\lim_{K\to\infty}\dfrac{1-\rho^{K-k+1}}{1-\rho}\right)||X||_*\\
      &=&\dfrac{\beta_2M}{1-\rho}||X||_*=:\alpha_2||X||_*.
    \end{array}
  \end{equation}
  Additionally,
  \begin{equation}
    \langle\bar{P}(k),X\rangle\ge \langle Q(k),X\rangle\ge\beta_1||X||_*=:\alpha_1||X||_*.
  \end{equation}
  So, we have proven the existence of two scalars $\alpha_1,\alpha_2>0$ such that $\alpha_1I\preceq P(k)\preceq \alpha_2I$ for all $k\ge0$. \\

  Observing now that
  \begin{equation}
  \langle\bar{P}(k+1),\mathcal{D}_k(X)\rangle-\langle\bar{P}(k),X\rangle=-\langle Q(k),X\rangle
  \end{equation}
  yields
  \begin{equation}
    \langle \mathcal{D}_k^*(P(k+1))-P(k)+Q(k), X\rangle=0.
  \end{equation}
  Since the above expression holds for all $X\in\psd^n$, then this implies that the Lyapunov equation of statement \eqref{st:stabMatDT_LTV3} must hold with  $P=\bar{P}$ and the same $Q$. This proves the result.}
\end{proof}

\black{The following result is the specialization of the previous one to the LTI case where it is shown that it is possible to connect the stability properties of the system to some spectral properties of a matrix and an operator associated with the LTI version of the system \eqref{eq:matdiffeqDT_LTV}:
\begin{theorem}[LTI case]\label{th:stabMatDT_LTI}
  Assume that the system \eqref{eq:matdiffeqDT_LTV} is time-invariant. Then, the following statements are equivalent:
  \begin{enumerate}[(a)]
    \item The system \eqref{eq:matdiffeqDT_LTV} is exponentially stable.
    \item The conditions of the Theorem \ref{th:stabMatDT_LTV} in statements \eqref{st:stabMatDT_LTV3} and \eqref{st:stabMatDT_LTV4} hold with constant matrix-valued functions $P(t)\equiv P\in\pd^n$ and $Q(t)\equiv Q\in\sm^n$.
  \item The matrix
  \begin{equation}
    \mathcal{M}_{\mathcal{D}}:=F^{\T}\left(\sum_{i=0}^N J_i\otimes J_i\right)F
  \end{equation}
  is Schur stable.
   \item We have that $\sp(\mathcal{D})\subset\mathbb{D}$ where
  \begin{equation}
        \sp(\mathcal{D}):=\left\{\lambda\in\mathbb{C}: \mathcal{D}(X)=\lambda X\textnormal{ for some }X\in\mathbb{S}^{n},X\ne0\right\}.
  \end{equation}
   and $\mathcal{D}$ is defined as the time-invariant version of the operator \eqref{eq:operatorLdt}.
  \end{enumerate}
\end{theorem}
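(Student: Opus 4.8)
The plan is to mirror the three-part strategy already used for the continuous-time counterpart in Theorem \ref{th:stabMatCT_LTI}, transporting every spectral condition from the left half-plane to the open unit disk $\mathbb{D}$, and to close the loop by proving the equivalences (a) $\Leftrightarrow$ (b), (a) $\Leftrightarrow$ (d) and (c) $\Leftrightarrow$ (d). The first of these is immediate: one specializes Theorem \ref{th:stabMatDT_LTV} to the time-invariant setting, noting that when all the data $J_i$ are constant the certificate $\bar P$ produced in the proof of that theorem can be chosen constant as well, so statements \eqref{st:stabMatDT_LTV3} and \eqref{st:stabMatDT_LTV4} hold with $P(k)\equiv P$ and $Q(k)\equiv Q$.

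For the equivalence (a) $\Leftrightarrow$ (d) I would exploit that $\mathcal{D}$ is a single linear operator on the finite-dimensional space $\sm^n$, of dimension $n(n+1)/2$, and that the free response is the pure iterate $X(k)=\mathcal{D}^{\,k-k_0}(X_0)$. Expanding $X_0$ in a basis adapted to the Jordan structure of $\mathcal{D}$, each mode contributes a term of magnitude comparable to $k^{m}|\lambda|^{k}$ for an eigenvalue $\lambda\in\sp(\mathcal{D})$; hence $||X(k)||$ decays exponentially for every $X_0$ precisely when every eigenvalue obeys $|\lambda|<1$, i.e. $\sp(\mathcal{D})\subset\mathbb{D}$. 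Because the cone $\psd^n$ linearly spans $\sm^n$ and the dynamics is linear, exponential decay on the cone in the sense of Definition \ref{def:DTsystemMat} is equivalent to exponential decay on all of $\sm^n$, so nothing is lost by testing on the full symmetric space. This is the discrete-time analogue of the eigenmatrix expansion used for Theorem \ref{th:stabMatCT_LTI}.

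For (c) $\Leftrightarrow$ (d) I would vectorize. Using $\vect(J_i X J_i^{\T})=(J_i\otimes J_i)\vect(X)$, the eigenrelation $\mathcal{D}(X)=\lambda X$ with $X\in\sm^n$, $X\ne0$, becomes $\left(\sum_{i=0}^N J_i\otimes J_i-\lambda I_{n^2}\right)\vect(X)=0$. Exactly as in the continuous-time proof, the operator $\mathcal{D}$ lives not on all of $\R^{n^2}$ but on the invariant subspace $\vect(\sm^n)$; projecting onto this subspace through the isometry $F$, with $F^{\T}F=I$ and $\vect(X)=F\,\overline\vect(X)$, turns the relation into $\left(\mathcal{M}_{\mathcal{D}}-\lambda I_{n(n+1)/2}\right)\overline\vect(X)=0$. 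Consequently the eigenvalues of $\mathcal{M}_{\mathcal{D}}$ coincide with the elements of $\sp(\mathcal{D})$, and Schur stability of $\mathcal{M}_{\mathcal{D}}$ is equivalent to $\sp(\mathcal{D})\subset\mathbb{D}$.

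The step requiring the most care is the projection in (c) $\Leftrightarrow$ (d): one must verify that $\vect(\sm^n)$ is genuinely invariant under $\sum_{i=0}^N J_i\otimes J_i$, which follows from $\mathcal{D}$ mapping $\sm^n$ into itself, and that $F$, being an isometry onto this subspace, faithfully represents the restriction so that forming $F^{\T}(\cdot)F$ neither creates nor destroys eigenvalues. Once this is secured, the rest is the standard spectral-radius characterization of discrete-time exponential stability together with routine Kronecker algebra.
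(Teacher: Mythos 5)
Your proposal is correct and follows essentially the same route as the paper's proof: (a)$\Leftrightarrow$(b) by specializing Theorem \ref{th:stabMatDT_LTV} to constant data, (a)$\Leftrightarrow$(d) via the spectral decomposition of the iterates of $\mathcal{D}$ on $\sm^n$, and (c)$\Leftrightarrow$(d) by Kronecker vectorization $\vect(J_iXJ_i^{\T})=(J_i\otimes J_i)\vect(X)$ followed by projection onto $\vect(\sm^n)$ through the isometry $F$. If anything, your version is slightly more careful than the paper's: the paper writes the solution as $X(k)=\sum_{i=1}^{n(n+1)/2}\lambda_i^k\langle X_0,X_i\rangle$, which tacitly assumes a complete set of eigenmatrices, whereas your Jordan-mode bound $k^{m}|\lambda|^{k}$ covers the non-diagonalizable case, and your explicit remark that $\psd^n$ linearly spans $\sm^n$ justifies passing between decay on the cone (as in Definition \ref{def:DTsystemMat}) and decay on all of $\sm^n$, a point the paper leaves implicit.
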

\begin{proof}
  The proof of the equivalence between the two first statements follow from Theorem \ref{th:stabMatDT_LTV} specialized to the LTI case.\\

   \noindent\textbf{Proof that statement (a) is equivalent to statement (d).} This follows from the fact that the LTI version of the system \eqref{eq:matdiffeqDT_LTV} is exponentially stable if and only if the eigenvalues associated with its dynamics are located inside the unit disc. Indeed, the solution to that LTI system is given by
  \begin{equation}
    X(k)=\sum_{i=1}^{n(n+1)/2}\lambda_i^k\langle X_0,X_i\rangle
  \end{equation}
  where $(\lambda_i,X_i)$ denotes the $i$-th pair of eigenvalues and eigenmatrices. This proves the equivalence.\\

\noindent\textbf{Proof that statement (c) is equivalent to statement (d).}  Using Kronecker calculus, Statement (d) is equivalent to saying that
\begin{equation}
    \left(\sum_{i=0}^NJ_i\otimes J_i-\lambda I_{n^2}\right)\vect(X)=0,\ X\ne0,X\in\mathbb{S}^n
\end{equation}
implies $|\lambda|<1$. The eigenvalues depend on the domain of the operator which is not the full space $\mathbb{R}^{n^2}$ here but the subspace associated with symmetric matrices $X$. This can be done by restricting the matrix  above to that subspace by projection. This can be achieved using the matrix $F$ as
\begin{equation}
  \left(\mathcal{M}_{\mathcal{D}}-\lambda I_{\frac{n(n+1)}{2}}\right)\overline\vect(X)=0,
\end{equation}
which is equivalent to statement (c).
\end{proof}}

\subsection{Stabilization}

\blue{Let us consider here the perturbation of the operator $\mathcal{D}_k$ given by
\begin{equation}\label{eq:operatorLdtu1}
  \begin{array}{rccl}
    \tilde{\mathcal{D}}_k:&\sm^n&\mapsto&\sm^n\\
    & X&\mapsto & \sum_{i=0}^N(J_i(k)+B_i(k)K_i(k)) X(J_i(k)+B_i(k)K_i(k))^\T
  \end{array}
\end{equation}
where the matrix-valued functions $J_i:\mathbb{Z}_{\ge0}\mapsto\mathbb{R}^{n\times n}$, $B_i:\mathbb{Z}_{\ge0}\mapsto\mathbb{R}^{n\times m_i}$, $K_i:\mathbb{Z}_{\ge0}\mapsto\mathbb{R}^{m_i\times n}$, $i=0,\ldots,N$, are assumed to be bounded. The objective is to formulate conditions for the existence of some $K_i$'s such that the above operator generates an uniformly exponentially stable state transition operator; i.e. $X(k+1)=\tilde{\mathcal{D}}_k(X(k))$ is globally uniformly exponentially stable. This is formulated in the following result:
\begin{theorem}\label{th:DT:stabz}
  The following statements are equivalent:
  \begin{enumerate}[(a)]
    \item There exist matrix-valued functions $K_i:\mathbb{Z}_{\ge0}\mapsto\mathbb{R}^{m_i\times n}$, $i=1,\ldots,$, such that the system $X(k+1)=\tilde{\mathcal{D}}_k(X(k))$ is uniformly exponentially stable.
    \item There exists a matrix-valued function $Q:\mathbb{Z}_{\ge0}\mapsto\pd^n$, $\alpha_1I\preceq Q(k)\preceq\alpha_2I$, such that the difference matrix inequality
    \begin{equation}
         \begin{bmatrix}
          -Q(k) & \row_{i=0}^N[(\mathscr{N}_i(k)^{\T}J_i(k)Q(k))^{\T}]\\
          \star & -\diag_{i=0}^N[\mathscr{N}_i(k)^{\T}Q(k+1)\mathscr{N}_i(k)]
        \end{bmatrix}\preceq -\alpha_3 I
    \end{equation}
    holds for some $\alpha_1,\alpha_2,\alpha_3>0$ and where $\mathscr{N}_i$ is such that $\mathscr{N}_i(k)^{\T}$ is a basis for the left null-space of $B_i(k)$; i.e. $\mathscr{N}_i(k)^{\T}B_i(k)=0$ and $\mathscr{N}_i(t)$ is of maximal rank.
    \item There exist a differential matrix-valued function $Q:\mathbb{Z}_{\ge0}\mapsto\pd^n$, $\alpha_1I\preceq Q(k)\preceq\alpha_2I$, and piecewise continuous matrix-valued functions $K_i:\mathbb{Z}_{\ge0}\mapsto\mathbb{R}^{m_i\times n}$ such that the differential matrix inequality
    \begin{equation}
\begin{bmatrix}
            -Q(k) & \row_{i=0}^N[(J_i(k)Q(k)+B_i(k)U_i(k))^{\T}]\\
             \star  & -I_N\otimes Q(k+1)
        \end{bmatrix}\preceq -\alpha_3 I
    \end{equation}
    holds for some $\alpha_1,\alpha_2,\alpha_3>0$. Moreover, suitable matrix-valued functions $K_i$'s are given by $K_i(k)=U_i(k)Q(k)^{-1}$.
  \end{enumerate}
\end{theorem}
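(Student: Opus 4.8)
The plan is to mirror the proof of Theorem~\ref{th:CT:stabz}, establishing the chain (a)$\Leftrightarrow$(c) first and then (c)$\Leftrightarrow$(b). For (a)$\Leftrightarrow$(c), I would observe that the closed-loop system $X(k+1)=\tilde{\mathcal{D}}_k(X(k))$ is exactly of the form \eqref{eq:matdiffeqDT_LTV} with $J_i(k)$ replaced by $J_i(k)+B_i(k)K_i(k)$, so Theorem~\ref{th:stabMatDT_LTV}, statement~\eqref{st:stabMatDT_LTV4}, applies directly. Since $\tilde{\mathcal{D}}_k^*(P)=\sum_{i=0}^N(J_i(k)+B_i(k)K_i(k))^{\T}P(J_i(k)+B_i(k)K_i(k))$, statement (a) is equivalent to the existence of $P(\cdot)$, uniformly bounded and bounded away from zero, satisfying
\begin{equation}
\sum_{i=0}^N(J_i(k)+B_i(k)K_i(k))^{\T}P(k+1)(J_i(k)+B_i(k)K_i(k))-P(k)\preceq-\alpha_3 I.
\end{equation}
I would then perform a congruence transformation with $Q(k):=P(k)^{-1}$, which turns $-P(k)$ into $-Q(k)$ and each factor $(J_i+B_iK_i)Q(k)$ into $J_i(k)Q(k)+B_i(k)U_i(k)$ after the change of variables $U_i(k):=K_i(k)Q(k)$. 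Writing $W_i(k):=J_i(k)Q(k)+B_i(k)U_i(k)$, the inequality becomes $\sum_{i=0}^N W_i(k)^{\T}Q(k+1)^{-1}W_i(k)-Q(k)\preceq-\alpha_3'I$, and a Schur complement on the stacked term $\row_{i=0}^N[W_i(k)^{\T}]$ against $-I\otimes Q(k+1)$ yields exactly the inequality of statement (c); reading backward recovers the gains via $K_i(k)=U_i(k)Q(k)^{-1}$.

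For (c)$\Leftrightarrow$(b), the idea is to eliminate the free variables $U_i$ through the Projection Lemma. I would split the matrix in (c) into a part independent of the $U_i$'s and a part linear in them, namely
\begin{equation}
\begin{bmatrix}
-Q(k) & \row_{i=0}^N[(J_i(k)Q(k))^{\T}]\\
\star & -I\otimes Q(k+1)
\end{bmatrix}+\He\left(\begin{bmatrix} 0 \\ \diag_{i=0}^N[B_i(k)] \end{bmatrix}\col_{i=0}^N[U_i(k)]\begin{bmatrix} I_n & 0 \end{bmatrix}\right)\preceq-\alpha_3 I.
\end{equation}
Applying the Projection Lemma \cite{Gahinet:94a} with $\col_{i=0}^N[U_i(k)]$ as the free variable produces two null-space conditions. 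The condition associated with the outer factor $\begin{bmatrix} I_n & 0\end{bmatrix}$ merely extracts the block $-I\otimes Q(k+1)\prec0$ and is therefore automatically satisfied. The condition associated with $\diag_{i=0}^N[B_i(k)]$ requires a basis for its left null-space; exploiting the block-diagonal structure, such a basis is $\diag_{i=0}^N[\mathscr{N}_i(k)]$ with $\mathscr{N}_i(k)^{\T}B_i(k)=0$, and the corresponding congruence collapses the off-diagonal blocks to $\mathscr{N}_i(k)^{\T}J_i(k)Q(k)$ and the lower-right block to $-\diag_{i=0}^N[\mathscr{N}_i(k)^{\T}Q(k+1)\mathscr{N}_i(k)]$, which is precisely the inequality of statement (b).

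The main obstacle I anticipate is not any single algebraic manipulation---each of congruence, change of variables, Schur complement, and the Projection Lemma is routine---but rather the bookkeeping needed to keep all estimates uniform in $k$ in the time-varying setting. The congruence and Schur steps rescale the margin constant, and one must invoke the uniform bounds $\alpha_1 I\preceq Q(k)\preceq\alpha_2 I$ to certify that a strictly positive margin survives at each step (so that $\preceq-\alpha_3 I$ forms are preserved with a possibly different but still uniform $\alpha_3$). Likewise, the Projection Lemma must be applied pointwise in $k$, and the explicit formula it furnishes for $\col_{i=0}^N[U_i(k)]$ should be verified to inherit boundedness from the bounded data $J_i,B_i,Q$, so that the recovered gains $K_i(k)=U_i(k)Q(k)^{-1}$ are themselves bounded, as required by the standing assumptions on $\tilde{\mathcal{D}}_k$.
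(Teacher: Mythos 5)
Your proposal is correct and takes essentially the same route as the paper's proof: for (a)$\Leftrightarrow$(c) you substitute the closed-loop matrices $J_i(k)+B_i(k)K_i(k)$ into the Lyapunov condition of Theorem \ref{th:stabMatDT_LTV}, perform the congruence with $Q(k)=P(k)^{-1}$, make the change of variables $U_i(k)=K_i(k)Q(k)$ and apply a Schur complement, and for (c)$\Leftrightarrow$(b) you use exactly the paper's decomposition into a $U$-independent block plus a $\He(\cdot)$ term followed by the Projection Lemma, with the null-space basis $\diag\left(I_n,\diag_{i=0}^N\mathscr{N}_i(k)\right)$ collapsing the inequality to that of statement (b). Your additional remarks on keeping the margins and the recovered gains uniformly bounded in $k$ only make explicit what the paper leaves implicit.
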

\begin{proof}
  The equivalence between the first and the third statements follows from substituting the matrices of the operator \eqref{eq:operatorLdtu1} into the condition of the statement \eqref{st:stabMatDT_LTV3} of Theorem \eqref{th:stabMatDT_LTV}. Performing then a congruence transformation with respect to $Q(k):=P(k)^{-1}$, making the changes of variables  $U_i(k)=K_i(k)Q(k)$, and using a Schur complement leads to the result. The equivalence between the two last statements follows from the fact that the differential matrix inequality in the third statement can be rewritten as
  \begin{equation}
    \begin{bmatrix}
            -Q(k) & \row_{0=1}^N[(J_i(k)Q(k))^{\T}]\\
             \star  & -I_N\otimes Q(k+1)
        \end{bmatrix}+\He\left(\begin{bmatrix}
          0\\
          \diag_{i=0}^N B_i(k)
        \end{bmatrix}\col_{i=0}^N(U_i(k))\begin{bmatrix}
          I_n & 0_{n\times nN}
        \end{bmatrix}\right)\preceq -\alpha_3 I.
  \end{equation}
  Applying then the Projection Lemma \cite{Gahinet:94a} yields the condition in the second statement.
\end{proof}}

\subsection{Systems with delays}

\black{Interestingly, it is possible to extend some of the above results to the case of systems with delays, leading to an extension of the results in \citep{Tanaka:13b} to the discrete-time case with multiple delays. The LTI time-delay system version of the system \eqref{eq:matdiffeqDT_LTV} is given by
\begin{equation}\label{eq:matdiffeqDT_LTV:delay}
\begin{array}{rcl}
  X(k+1)&=&\sum_{i=0}^NJ_i X(k)J_i^\T+\sum_{i=1}^NH_i X(k-\tau_i)H_i^\T,\ k\ge 0\\
  X(k)&=&\Xi(k)\in\psd^n,\ k\in\{-\bar\tau,\ldots,0\}
\end{array}
\end{equation}
where $X(k)\in\psd^n$ is the state of the system, $\Xi:\{-\bar\tau,\ldots,0\}\mapsto\psd^n$ is the initial condition and the delays are such that $\tau_i\ge0$, $i=1,\ldots,N$, with $\bar\tau=\max_i\{\tau_i\}$. It is also convenient to define the operators $\mathcal{T}_i$ as
\begin{equation}
  \begin{array}{rccl}
    \mathcal{T}_i:&\sm^n&\mapsto&\sm^n\\
    & X&\mapsto & H_i XH_i^\T
  \end{array}
\end{equation}
and their adjoints $\mathcal{T}_i^*$ as
\begin{equation}
  \begin{array}{rccl}
    \mathcal{T}_i^*:&\sm^n&\mapsto&\sm^n\\
    & X&\mapsto & H_i^\T XH_i.
  \end{array}
\end{equation}
It can also easily be shown that the solution of the system \eqref{eq:matdiffeqDT_LTV:delay} remains confined in the positive semidefinite cone provided that the initial condition take values inside the positive definite cone.

The following result demonstrates that the same delay-independent stability property holds for the system \eqref{eq:matdiffeqDT_LTV:delay} as for linear time-invariant positive discrete-time systems \citep{Aleksandrov:14}:
\begin{theorem}\label{th:DT:delay}
The following statements are equivalent:
  \begin{enumerate}[(a)]
    \item\label{st:DT:delay1} The system \eqref{eq:matdiffeqDT_LTV:delay} is exponentially stable for any delay values $\tau_i\ge0$, $i=1,\ldots,N$.
    \item\label{st:DT:delay1b} The system \eqref{eq:matdiffeqDT_LTV:delay} with zero delays (i.e. $\tau_i=0$, $i=1,\ldots,N$) is exponentially stable.
    \item\label{st:DT:delay2} There exist matrices $P,Q_i\in\pd^n$, $i=1,\ldots,N$, such that the conditions
    \begin{equation}\label{eq:kdospdosadadkDT}
   \mathcal{D}^*(P)-P+\sum_{i=1}^NQ_i\prec0\ \textnormal{and }\mathcal{T}_i^*(P)-Q_i\preceq0,\ i=1,\ldots,N
\end{equation}
    hold.
    \item\label{st:DT:delay3} We have that $\sp\left(\mathcal{D}+\sum_{i=1}^N\mathcal{T}_i\right)\subset\mathbb{D}$ where
  \begin{equation}
        \sp\left(\mathcal{D}+\sum_{i=1}^N\mathcal{T}_i\right):=\left\{\lambda\in\mathbb{C}: \mathcal{D}(X)+\sum_{i=1}^N\mathcal{T}_i(X)=\lambda X\textnormal{ for some }X\in\mathbb{S}^{n},X\ne0\right\}.
  \end{equation}
  \end{enumerate}
\end{theorem}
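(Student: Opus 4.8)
The plan is to mirror the proof of the continuous-time Theorem~\ref{th:CT:delay}, transcribing the Lyapunov--Krasovskii argument from the integral setting to the summation setting. First I would dispose of the cheap links. The equivalence between \eqref{st:DT:delay1b} and \eqref{st:DT:delay3} is immediate from Theorem~\ref{th:stabMatDT_LTI} applied to the zero-delay generator $\mathcal{D}+\sum_{i=1}^N\mathcal{T}_i$, since this operator is precisely the LTI generator of \eqref{eq:matdiffeqDT_LTV:delay} when every $\tau_i=0$. The implication \eqref{st:DT:delay1}$\Rightarrow$\eqref{st:DT:delay1b} is trivial because the zero-delay system is one member of the delay family. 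Hence only \eqref{st:DT:delay2}$\Rightarrow$\eqref{st:DT:delay1} and \eqref{st:DT:delay1b}$\Rightarrow$\eqref{st:DT:delay2} demand genuine work.

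For \eqref{st:DT:delay2}$\Rightarrow$\eqref{st:DT:delay1}, I would introduce the discrete Lyapunov--Krasovskii functional
\begin{equation}
V(X_k)=\langle P,X(k)\rangle+\sum_{i=1}^N\sum_{\ell=k-\tau_i}^{k-1}\langle Q_i,X(\ell)\rangle,
\end{equation}
which, since each $Q_i\succ0$ and $X(\ell)\succeq0$, is bounded below by $\lmin(P)||X(k)||_*$ and above by a constant times $\max_{\ell}||X(\ell)||_*$ over the delay window (using Proposition~\ref{prop:PQ}). Computing the forward difference and substituting $X(k+1)=\mathcal{D}(X(k))+\sum_{i=1}^N\mathcal{T}_i(X(k-\tau_i))$, the inner double sum telescopes so that the boundary contributions leave
\begin{equation}
V(X_{k+1})-V(X_k)=\left\langle \mathcal{D}^*(P)-P+\sum_{i=1}^NQ_i,\,X(k)\right\rangle+\sum_{i=1}^N\left\langle \mathcal{T}_i^*(P)-Q_i,\,X(k-\tau_i)\right\rangle,
\end{equation}
where each operator has been moved onto $P$ through its adjoint. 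Under \eqref{st:DT:delay2} the first inner product is strictly negative, dominating $-\eps||X(k)||_*$ for a small $\eps>0$, and every term in the sum is nonpositive by Proposition~\ref{prop:PQ}\eqref{st:norm:3} because $X(k-\tau_i)\succeq0$; thus $V(X_{k+1})-V(X_k)\le-\eps||X(k)||_*$. As the certificate does not involve the delays, exponential stability holds for every choice of the $\tau_i$.

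For \eqref{st:DT:delay1b}$\Rightarrow$\eqref{st:DT:delay2}, I would exploit that stability of the zero-delay system yields, through Theorem~\ref{th:stabMatDT_LTV}, a matrix $P\in\pd^n$ and an $\eps>0$ with $\mathcal{D}^*(P)+\sum_{i=1}^N\mathcal{T}_i^*(P)-P\preceq-2\eps I$. Setting $Q_i:=\mathcal{T}_i^*(P)+\tfrac{\eps}{N}I$, each $Q_i\succ0$ since $\mathcal{T}_i^*(P)=H_i^\T P H_i\succeq0$, and a direct substitution gives
\begin{equation}
\mathcal{D}^*(P)-P+\sum_{i=1}^NQ_i\preceq-\eps I\prec0\quad\textnormal{and}\quad\mathcal{T}_i^*(P)-Q_i=-\tfrac{\eps}{N}I\preceq0,
\end{equation}
which is exactly \eqref{st:DT:delay2}. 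Everything is routine once the functional is fixed; the only delicate point is selecting the Lyapunov--Krasovskii functional and verifying that its forward difference collapses to the two expressions in \eqref{st:DT:delay2}. Concretely, the discrete sum $\sum_{\ell=k-\tau_i}^{k-1}$ must telescope so that the endpoints at $\ell=k$ and $\ell=k-\tau_i$ yield exactly $+\langle Q_i,X(k)\rangle$ and $-\langle Q_i,X(k-\tau_i)\rangle$; keeping these index bounds correct is the main thing to watch, but it is otherwise a faithful transcription of the continuous-time functional \eqref{eq:LKF:CT}.
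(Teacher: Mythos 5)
Your proposal is correct and is essentially identical to the paper's own proof: you dispose of \eqref{st:DT:delay1}$\Rightarrow$\eqref{st:DT:delay1b} and of \eqref{st:DT:delay1b}$\Leftrightarrow$\eqref{st:DT:delay3} via Theorem~\ref{th:stabMatDT_LTI} in the same way, your functional is exactly the paper's \eqref{eq:LKF:DT} after the index shift $\ell=k+j$, its forward difference collapses to the same two inner products handled via Proposition~\ref{prop:PQ}, and the converse direction uses the paper's exact certificate $Q_i=\mathcal{T}_i^*(P)+\tfrac{\varepsilon}{N}I$ built from $\mathcal{D}^*(P)-P+\sum_{i=1}^N\mathcal{T}_i^*(P)\preceq-2\varepsilon I$. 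There are no gaps; you even silently correct the paper's small typo where $\mathcal{C}^*$ appears in place of $\mathcal{D}^*$ in the decrease estimate.
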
}
\black{\begin{proof}
We know from Theorem \ref{th:stabMatDT_LTI} that the statements \eqref{st:DT:delay1b} and \eqref{st:DT:delay3} are equivalent. It is also immediate to see that the statement \eqref{st:DT:delay1} implies the statement \eqref{st:DT:delay1b}. So, we need to prove the two remaining implications.\\

\noindent\textbf{Proof that the statement \eqref{st:DT:delay2} implies the statement \eqref{st:DT:delay1}.} Let us consider the following Lyapunov-Krasovskii functional
\begin{equation}\label{eq:LKF:DT}
  V(X_k)=\langle P,X_k(0)\rangle+\sum_{i=1}^N\sum_{j=-\tau_i}^{-1}\langle Q_i,X_k(j)\rangle
\end{equation}
where $X_k(s)=X(k+s)$, $s\in\{-\bar\tau,\ldots,0\}$. Clearly, we have that
\begin{equation}
  V(X_k)\ge\lmin(P)||X_k(0)||_*
\end{equation}
and
\begin{equation}
  V(X_k)\le(\lmax(P)+\lmax(Q))||X_k||_{d,*}
\end{equation}
where $||X_k||_{d,*}:=\max_{s\in\{-\bar\tau,\ldots,0\}}||X(k+s)||_{*}$.\\

\noindent The discrete-time derivative of the functional \eqref{eq:LKF:DT}  is given by
\begin{equation}
  \begin{array}{rcl}
    V(X_{k+1})-V(X_{k})   &=&   \langle P,X(k+1)-X(k)\rangle+\sum_{i=1}^N\langle Q_i,X(k)\rangle-\langle Q_i,X(k-\tau_i)\rangle\\
                                            &=&   \left\langle P,\mathcal{D}(X(k))+\sum_{i=1}^N\mathcal{T}_i(X(k-\tau_i))-X(k)\right\rangle+\sum_{i=1}^N\langle Q_i,X(k)\rangle-\langle Q_i,X(k-\tau_i)\rangle\\
                                            &=&   \left\langle \mathcal{D}^*(P)-P+\sum_{i=1}^NQ_i,X(k)\right\rangle+\sum_{i=1}^N\langle\mathcal{T}_i^*(P)-Q_i,X(k-\tau))\rangle.

  \end{array}
\end{equation}
Under the conditions of the statement \eqref{st:DT:delay2}, there exists a small enough $\eps>0$ such that $\textstyle\mathcal{C}^*(P)-P+\sum_{i=1}^NQ_i\preceq -\eps I$ which implies
\begin{equation}
  \begin{array}{rcl}
     V(X_{k+1})-V(X_{k})   &\le&   -\eps ||X_k(0)||_*
  \end{array}
\end{equation}
Therefore, the functional \eqref{eq:LKF:DT} is a discrete-time Lyapunov-Krasovskii functional \citep{Fridman:14} for the system \eqref{eq:matdiffeqDT_LTV:delay} and the system is exponentially stable regardless the value of the delays as the conditions do not depend on the values of the delays. This proves the implication.\\

\noindent\textbf{Proof that the statement \eqref{st:DT:delay1b} implies the statement \eqref{st:DT:delay2}.} Assume that the system \eqref{eq:matdiffeqDT_LTV:delay} with zero delays is exponentially stable, then the system
\begin{equation}
  X(k+1)=\mathcal{D}(X(k))+\sum_{i=1}^N\mathcal{T}_i(X(k))
\end{equation}
is exponentially stable. This implies that there exists a matrix $P\in\pd^n$ and an $\eps>0$ such that
\begin{equation}
  \mathcal{D}^*(P)-P+\sum_{i=1}^N\mathcal{T}_i^*(P)\preceq-2\eps I.
\end{equation}
Define now
\begin{equation}
  Q_i=\mathcal{T}_i^*(P)+\dfrac{\eps}{N}I_n\succ0
\end{equation}
for some $\eps>0$.  Substituting that in \eqref{eq:kdospdosadadkDT} yields
\begin{equation}
  \begin{array}{rcl}
     \mathcal{D}^*(P)-P+\sum_{i=1}^NQ_i&=&  \mathcal{D}^*(P)-P+\sum_{i=1}^N\mathcal{T}_i^*(P)\preceq-\eps I,\\
     \mathcal{T}_i^*(P)-Q_i&=&-\dfrac{\eps}{N}I_n\preceq0,
  \end{array}
\end{equation}
which implies that the conditions in statement \eqref{st:DT:delay2} hold, thereby proving the desired result. The proof is completed.
\end{proof}}

\subsection{Applications}

\blue{As in the continuous-time case, the simplicity and the versatility of the approach as a general tool for the analysis of (stochastic) discrete-time systems is illustrated through several examples, namely on the analysis of linear time-varying stochastic systems subject to difference types of stochastic finite-variance parameters, linear time-varying stochastic systems with delays, and time-varying systems with Markov jump parameters.}

\subsubsection{Stochastic LTV systems}

\black{We consider here the following stochastic discrete-time varying system
\begin{equation}\label{eq:syst:DT_general_LTV}
  \begin{array}{rcl}
    x(k+1) &=&A_0(k)x(k)+\sum_{i=1}^N\nu_i(k)A_i(k)x(k)\\
    x(k_0)&=&x_0
  \end{array}
\end{equation}
where $x(k),x_0\in\mathbb{R}^n$ are the state of the system and the initial condition, $k_0$ is the initial time. The random signals  $\nu_1(k),\ldots,\nu_N(k)\in\mathbb{R}$ are assumed to be independent of each other and from the state $x(k)$, and they are also assumed to be stationary. The matrix-valued functions $A_i$, $i=0,\ldots,N$, are assumed to be bounded for all $k$'s.

We consider here the following notion of stability for the system \eqref{eq:syst:DT_general_LTV}:
\begin{definition}\label{def;MSS2}
  The system \eqref{eq:syst:DT_general_LTV} is uniformly exponentially mean-square stable if there exist some scalars $\rho\in[0,1)$ and $\beta>1$ such that
  \begin{equation}
    \E[||x(k)||_2^2]\le\beta \rho^{k-k_0}\E[||x_0||_2^2]
  \end{equation}
  holds for all $k\ge k_0$, all $k_0\ge0$, and all $x_0\in\mathbb{R}^n$.
\end{definition}

As in the continuous-time case, this notion of stability is directly connected to the dynamics of the \blue{seconder-order moment} matrix associated with the system \eqref{eq:syst:DT_general_LTV}. This remark yields the following result:
\begin{theorem}
  Assume that the random signals $\nu_1(k),\ldots,\nu_N(k)$ have zero mean and unit variance. Then, the following statements are equivalent:
  \begin{enumerate}[(a)]
    \item The system \eqref{eq:syst:DT_general_LTV} is  uniformly exponentially mean-square stable.
    \item  There exist some matrix-valued functions $P,Q:\mathbb{Z}_{\ge0}\mapsto\mathbb{S}^n_{\succ0}$ such that
    \begin{equation}
    \begin{array}{rcccl}
      \alpha_1I&\preceq& P(k)&\preceq &\alpha_2I,\\
      \beta_1I&\preceq &Q(k)&\preceq &\beta_2I,
    \end{array}
    \end{equation}
    and such that Lyapunov difference equation
    \begin{equation}
     \sum_{i=0}^NA_i(k)^{\T}P(k+1)A_i(k)-P(k)=-Q(k)
  \end{equation}
  hold for some positive constants $\alpha_1,\alpha_2,\beta_1,\beta_2$ and for all $k\ge0$.
  \end{enumerate}
\end{theorem}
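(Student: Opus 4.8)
The plan is to reduce the mean-square stability of the stochastic recursion \eqref{eq:syst:DT_general_LTV} to the deterministic matrix-valued stability problem already solved in Theorem \ref{th:stabMatDT_LTV}, exactly as in the preceding Markov-jump and diffusion examples. The bridge is the second-order moment matrix $X(k):=\E[x(k)x(k)^{\T}]$, which is positive semidefinite and satisfies $\E[||x(k)||_2^2]=\trace\E[x(k)x(k)^{\T}]=\trace X(k)=||X(k)||_*$. Consequently, Definition \ref{def;MSS2} holds if and only if $||X(k)||_*$ decays uniformly exponentially, i.e. if and only if the matrix-valued dynamics governing $X(k)$ is uniformly exponentially stable in the sense of Definition \ref{def:DTsystemMat}.

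First I would derive the recursion for $X(k)$. Writing $x(k+1)=\big(A_0(k)+\sum_{i=1}^N\nu_i(k)A_i(k)\big)x(k)$ and forming the outer product, I would take expectations and use that each $\nu_i(k)$ is independent of $x(k)$ and of the other $\nu_j(k)$, has zero mean, and has unit variance. The zero-mean property kills every term linear in a single $\nu_i(k)$, in particular all cross terms between the $A_0$ part and the $A_i$ parts; independence across $i$ together with $\E[\nu_i(k)\nu_j(k)]=\delta_{ij}$ kills the mixed $i\ne j$ terms and replaces each surviving $\nu_i(k)^2$ by $1$. This yields
\begin{equation}
  X(k+1)=A_0(k)X(k)A_0(k)^{\T}+\sum_{i=1}^N A_i(k)X(k)A_i(k)^{\T}=\sum_{i=0}^N A_i(k)X(k)A_i(k)^{\T},
\end{equation}
which is precisely the deterministic system \eqref{eq:matdiffeqDT_LTV} under the identification $J_i(k)=A_i(k)$, $i=0,\ldots,N$.

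It then remains to invoke Theorem \ref{th:stabMatDT_LTV}. The equivalence between statements \eqref{st:stabMatDT_LTV1} and \eqref{st:stabMatDT_LTV3} there, specialized to $J_i=A_i$, gives uniform exponential stability of $X(k)$ if and only if there exist bounded, uniformly positive definite $P,Q$ satisfying $\mathcal{D}_k^*(P(k+1))-P(k)+Q(k)=0$; since here $\mathcal{D}_k^*(\cdot)=\sum_{i=0}^N A_i(k)^{\T}(\cdot)A_i(k)$, this is exactly the Lyapunov difference equation in the statement. Combined with the moment reduction above, this establishes the claimed equivalence.

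I expect the only genuinely delicate step to be the expectation computation, and specifically the bookkeeping of the independence and moment hypotheses: one must verify that $\nu_i(k)$ being independent of $x(k)$, not merely uncorrelated with it, is what legitimizes factoring $\E[\nu_i(k)\,x(k)x(k)^{\T}]=\E[\nu_i(k)]\,X(k)$, and that independence across the index $i$ is what forces the off-diagonal covariance terms to vanish. Everything else is a direct transcription into the hypotheses of Theorem \ref{th:stabMatDT_LTV}.
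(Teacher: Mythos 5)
Your proposal is correct and follows exactly the paper's route: derive the second-order moment recursion $X(k+1)=\sum_{i=0}^N A_i(k)X(k)A_i(k)^{\T}$ from the zero-mean, unit-variance and independence hypotheses, then apply Theorem \ref{th:stabMatDT_LTV} with $J_i=A_i$. Your added bookkeeping of which hypothesis kills which cross term, and the observation that $\E[||x(k)||_2^2]=||X(k)||_*$ links Definition \ref{def;MSS2} to Definition \ref{def:DTsystemMat}, only makes explicit what the paper leaves implicit.
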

\begin{proof}
  Since the $\nu_i$'s have zero mean and unit variance, then the \blue{seconder-order moment} matrix associated with the system \eqref{eq:syst:DT_general_LTV} is described by the matrix difference equation
\begin{equation}
  X(k+1)  =  \sum_{i=0}^NA_i(k)X(k)A_i(k)^{\T}
\end{equation}
together with $X(k_0)=\E[x(k_0)x(k_0)^{\T}]$. Applying then Theorem \ref{th:stabMatDT_LTV} yields the result.
\end{proof}

\begin{theorem}
   Assume that the random signals $\nu_1(k),\ldots,\nu_N(k)$ follow a Bernoulli distribution with  $\P(\nu_i(k)=1)=p_i$ and $\P(\nu_i(k)=1)=1-p_i$, $i=1,\ldots,N$. Then, the following statements are equivalent:
  \begin{enumerate}[(a)]
    \item The system \eqref{eq:syst:DT_general_LTV} is  uniformly exponentially mean-square stable.
    \item  There exist some matrix-valued functions $P,Q:\mathbb{Z}_{\ge0}\mapsto\mathbb{S}^n_{\succ0}$ such that
    \begin{equation}
    \begin{array}{rcccl}
      \alpha_1I&\preceq& P(k)&\preceq &\alpha_2I,\\
      \beta_1I&\preceq &Q(k)&\preceq &\beta_2I,
    \end{array}
    \end{equation}
    and such that Lyapunov difference equation
    \begin{equation}
    \sum_{i=0}^N\bar{A}_i(k)^{\T}P(k+1)\bar{A}_i(k)-P(k)+Q(k)=0
  \end{equation}
  hold for some positive constants $\alpha_1,\alpha_2,\beta_1,\beta_2$ and for all $k\ge0$ where
  \begin{equation}
    \begin{array}{rcl}
        \bar{A}_0(k)&=&A_0(k)+\sum_{i=1}^Np_iA_i(k),\\
        \bar{A}_i(k)&=&(p_i(1-p_i))^{1/2}A_i(k),\ i=1,\ldots,N.
    \end{array}
  \end{equation}
  \end{enumerate}
\end{theorem}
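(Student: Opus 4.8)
The plan is to reduce the mean-square stability of the stochastic system \eqref{eq:syst:DT_general_LTV} to the exponential stability of an associated matrix-valued difference equation of the form \eqref{eq:matdiffeqDT_LTV}, and then to invoke Theorem \ref{th:stabMatDT_LTV}. Writing $X(k):=\E[x(k)x(k)^\T]$ for the second-order moment matrix, the identity $\E[\|x(k)\|_2^2]=\trace X(k)=\|X(k)\|_*$ together with statement \eqref{st:norm:4} of Proposition \ref{prop:PQ} shows that the system is uniformly exponentially mean-square stable in the sense of Definition \ref{def;MSS2} if and only if the matrix-valued recursion for $X(k)$ is uniformly exponentially stable in the sense of Definition \ref{def:DTsystemMat}. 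The heart of the argument is therefore to compute the exact difference equation satisfied by $X(k)$.

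First I would form $x(k+1)x(k+1)^\T$ by expanding $\left(A_0(k)x(k)+\sum_{i=1}^N\nu_i(k)A_i(k)x(k)\right)$ against its transpose, and then take the conditional expectation given $x(k)$, using that the $\nu_i(k)$ are mutually independent and independent of the state. The Bernoulli moments enter through $\E[\nu_i]=p_i$, $\E[\nu_i\nu_j]=p_ip_j$ for $i\neq j$, and crucially $\E[\nu_i^2]=p_i$, the last identity holding because $\nu_i\in\{0,1\}$ forces $\nu_i^2=\nu_i$. Suppressing the $k$-dependence on the right, this produces
\begin{equation*}
X(k+1)=A_0XA_0^\T+\sum_{i=1}^Np_i\left(A_0XA_i^\T+A_iXA_0^\T\right)+\sum_{\substack{i,j=1\\i\neq j}}^Np_ip_j\,A_iXA_j^\T+\sum_{i=1}^Np_i\,A_iXA_i^\T.
\end{equation*}

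Next I would complete the square: the matrix $\bar{A}_0=A_0+\sum_{i=1}^Np_iA_i$ satisfies $\bar{A}_0X\bar{A}_0^\T=A_0XA_0^\T+\sum_ip_i(A_0XA_i^\T+A_iXA_0^\T)+\sum_{i,j}p_ip_jA_iXA_j^\T$, which absorbs every cross term together with the full double sum, including the diagonal contribution $\sum_ip_i^2A_iXA_i^\T$. Subtracting this from $X(k+1)$ leaves exactly the variance correction $\sum_{i=1}^N(p_i-p_i^2)A_iXA_i^\T=\sum_{i=1}^Np_i(1-p_i)A_iXA_i^\T$, so that with $\bar{A}_i=(p_i(1-p_i))^{1/2}A_i$ one obtains the clean form $X(k+1)=\sum_{i=0}^N\bar{A}_i(k)X(k)\bar{A}_i(k)^\T$. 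This is precisely \eqref{eq:matdiffeqDT_LTV} with $J_i$ replaced by $\bar{A}_i$, whose adjoint operator is $\mathcal{D}_k^*(P)=\sum_{i=0}^N\bar{A}_i(k)^\T P\bar{A}_i(k)$; applying statement \eqref{st:stabMatDT_LTV3} of Theorem \ref{th:stabMatDT_LTV} then yields the asserted Lyapunov difference equation.

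The main obstacle is not conceptual but a matter of bookkeeping in the moment computation: one must track that $\E[\nu_i^2]=p_i$ rather than $p_i^2$, since it is exactly this gap that generates the $p_i(1-p_i)$ weights and distinguishes this result from the deterministic-parameter case. Mishandling the diagonal second moment would collapse the $\bar{A}_i$ terms for $i\ge1$ and produce an incorrect, overly optimistic stability condition.
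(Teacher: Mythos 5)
Your proposal is correct and follows essentially the same route as the paper: compute the Bernoulli moments $\E[\nu_i]=p_i$, $\E[\nu_i^2]=p_i$, $\E[\nu_i\nu_j]=p_ip_j$, show the second-order moment matrix $X(k)=\E[x(k)x(k)^{\T}]$ satisfies $X(k+1)=\sum_{i=0}^N\bar{A}_i(k)X(k)\bar{A}_i(k)^{\T}$, and apply Theorem \ref{th:stabMatDT_LTV}. The only difference is that you spell out the expansion and the completion-of-the-square step (absorbing the cross terms and the diagonal $p_i^2$ contribution into $\bar{A}_0$, leaving the variance correction $p_i(1-p_i)A_iXA_i^{\T}$), which the paper states without detail.
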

\begin{proof}
  First note that we have $\E[\nu_i]=p_i$, $\E[\nu_i^2]=p_i$, $\E[\nu_i\nu_j]=p_ip_j$, and $\textnormal{Var}(\nu_i) = p_i(1-p_i)$. The \blue{seconder-order moment} associated with the system \eqref{eq:syst:DT_general_LTV} is given by
\begin{equation}
  \begin{array}{rcl}
    X(k+1)=\bar{A}_0(k)X(k)\bar{A}_0(k)^{\T}+\sum_{i=1}^N\bar{A}_i(k)X(k)\bar{A}_i(k)^{\T}
  \end{array}
\end{equation}
where the matrices are defined in the result. Applying then Theorem \ref{th:stabMatDT_LTV} yields the result.
%
  %
\end{proof}}

\subsubsection{A class of stochastic linear system with delays}

\black{We consider here the following stochastic discrete-time varying system
\begin{equation}\label{eq:delaystoch:DT}
  \begin{array}{rcl}
    x(k+1) &=&A_0x(k)+\sum_{i=1}^N\nu_i(k)A_ix(k-\tau_i)\\
    x(s)&=&\phi(s),\ s\in\{-\bar\tau,\ldots,0\}
  \end{array}
\end{equation}
where $x(k)\in\mathbb{R}^n$ is the state of the system, $\phi:\{-\bar \tau,\ldots,0\}\mapsto\mathbb{R}^n$ is the initial condition and the delays $\tau_i$ are such that $\tau_i\ge0$, $i=1,\ldots,N$ with $\textstyle\bar\tau:=\max_i\{\tau_i\}$. The zero mean and unit variance random signals $\nu_1(k),\ldots,\nu_N(k)$ are assumed to be independent of each other and from the state $x(k)$.

Since the system now involves some time-delays, we need to adapt the notion of mean-square exponential stability to this setup as follows:
\begin{definition}
  The system \eqref{eq:delaystoch:DT} is mean-square exponentially stable if there exist some scalars $\rho\in(0,1)$ and $\beta>1$ such that
  \begin{equation}
    \E[||x(k)||_2^2]\le\beta \rho^{k}\E\left[\max_{s\in\{-\bar \tau,\ldots,0\}}||\phi(s)||_2^2\right]
  \end{equation}
  holds for all $k\ge0$ and all initial condition $\phi\in C(\{-\bar \tau,\ldots,,0\},\mathbb{R}^n)$.
\end{definition}

We have the following result:
\begin{theorem}
The following statements are equivalent:
\begin{enumerate}[(a)]
    \item The system \eqref{eq:delaystoch:DT} is mean-square  exponentially stable for all $\tau_i\ge0$, $i=1,\ldots,N$.
    \item The system  \eqref{eq:delaystoch:DT}  with zero-delays (i.e. with $\tau_i=0$ for all $i=1,\ldots,N$) is mean-square exponentially stable.
\end{enumerate}
\end{theorem}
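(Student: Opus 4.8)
The plan is to mirror the continuous-time diffusion-with-delays example: I would pass to the second-order moment matrix, show that its dynamics form a matrix-valued delay difference equation of the type \eqref{eq:matdiffeqDT_LTV:delay}, and then invoke Theorem \ref{th:DT:delay} to obtain the delay-independent equivalence for free.

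First I would set $X(k):=\E[x(k)x(k)^{\T}]$ and compute $X(k+1)=\E[x(k+1)x(k+1)^{\T}]$ by expanding the right-hand side of \eqref{eq:delaystoch:DT}. The crucial observation is that $x(k)$ and each delayed state $x(k-\tau_i)$ are functions of the noise strictly prior to time $k$, so $\nu_i(k)$ is independent of $x(k)$ and of every $x(k-\tau_i)$. Consequently, the expectation of every term carrying a single factor $\nu_i(k)$ vanishes because $\E[\nu_i(k)]=0$, while in the purely quadratic terms the identity $\E[\nu_i(k)\nu_j(k)]=\delta_{ij}$ collapses the double sum onto its diagonal. This yields
\begin{equation*}
X(k+1)=A_0X(k)A_0^{\T}+\sum_{i=1}^NA_iX(k-\tau_i)A_i^{\T},
\end{equation*}
which is exactly of the form \eqref{eq:matdiffeqDT_LTV:delay} with $J_0=A_0$, $J_i=0$ for $i\ge1$, and $H_i=A_i$.

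It then remains to transfer the stability notions. Since $X(k)\succeq0$ (it is a second-order moment matrix) and $\E[\|x(k)\|_2^2]=\trace X(k)=\|X(k)\|_*$, mean-square exponential stability of \eqref{eq:delaystoch:DT} for a fixed delay vector is equivalent to exponential stability of the matrix-valued system above for that same delay vector. I would therefore apply the equivalence between statements \eqref{st:DT:delay1} and \eqref{st:DT:delay1b} of Theorem \ref{th:DT:delay} to the matrix-valued delay equation and pull the conclusion back to the original stochastic system, which gives the desired equivalence between mean-square stability for all $\tau_i\ge0$ and mean-square stability at zero delays.

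The routine but delicate step -- the one I expect to be the main obstacle -- is the moment computation, and specifically the rigorous justification that the cross terms drop out. This requires the noise sequence to be white (independent across time) so that $\nu_i(k)$ is genuinely independent of the past states $x(k-\tau_i)$, together with the mutual independence and the normalization $\E[\nu_i(k)\nu_j(k)]=\delta_{ij}$ needed to reduce the double sum; once this factorization of expectations is secured, the remainder follows immediately from Theorem \ref{th:DT:delay}.
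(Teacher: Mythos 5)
Your proposal is correct and follows essentially the same route as the paper: pass to the second-order moment matrix $X(k)=\E[x(k)x(k)^{\T}]$, observe that it obeys a matrix-valued delay difference equation of the form \eqref{eq:matdiffeqDT_LTV:delay} with $J_0=A_0$, $J_i=0$ for $i\ge1$, $H_i=A_i$, and invoke the equivalence of statements \eqref{st:DT:delay1} and \eqref{st:DT:delay1b} of Theorem \ref{th:DT:delay}. The cross-term cancellation you flag (zero mean, unit variance, mutual independence, and temporal whiteness of the $\nu_i$) is exactly what the paper's one-line moment computation leaves implicit, so your extra care there is a refinement rather than a deviation.
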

\begin{proof}
     The \blue{seconder-order moment} matrix $X(k)$ associated with the system \eqref{eq:delaystoch:DT} is governed by
\begin{equation}
  X(k+1)=A_0(k)X(k)A_0(k)^{\T}+\sum_{i=1}^NA_i(k)X(k-\tau_i)A_i(k)^{\T}
\end{equation}
and $X(0)=\E[x(0)x(0)^{\T}]$. Applying now Theorem \ref{th:DT:delay} yields the result.
\end{proof}}

\subsubsection{Time-varying Markov jump linear systems}

\black{We consider here the following class of systems subject to Markovian jumps
\begin{equation}\label{eq:DT:Markov}
\begin{array}{rcl}
  x(k+1)&=&A_{\sigma(k)}(k)x(k)+\sum_{i=1}^NB_{\sigma(k),i}(t)x(k)\nu_i(k),\ k\ge k_0\vspace{-3mm}\\
  x(k_0)&=&x_0,\\
  \sigma(k_0)&=&\sigma_0
\end{array}
\end{equation}
where $x(k),x_0\in\mathbb{R}^n$ are the state of the system and the initial condition, $k_0$ is the initial time. The random signals  $\nu_1(k),\ldots,\nu_N(k)\in\mathbb{R}$ are assumed to be independent of each other and from the state $x(k)$ and the switching signal $\sigma(k)$, and are also assumed to be stationary. The switching signal $\sigma:\mathbb{Z}_{\ge t_0}\mapsto\{1,\ldots,M\}$, with initial value $\sigma_0$, evolves according to the finite Markov chain defined by
\begin{equation}\label{eq:DT:Markov2}
  \P(\sigma(k+1)=j|\sigma(k)=i)=\pi_{ij}
\end{equation}
where $\pi_{ij}\ge0$ for all $i=1,\ldots,M$, we have that $\textstyle\sum_{j=1}^M\pi_{ij}=1$. The matrix-valued functions $A_i$, $i=0,\ldots,M$, and $B_{i,j}$, $i=1,\ldots,M$, $j=1,\ldots,N$, are assumed to be bounded for all $k$'s.

\bblue{We consider here the concept of stability called \emph{uniformly exponential mean-square stability in conditioning} as defined in \citep{Dragan:02,Dragan:13}:
\begin{definition}\label{def:MSS2b}
  The system \eqref{eq:DT:Markov}\eqref{eq:DT:Markov2} is uniformly exponentially mean-square stable in conditioning if there exist some scalars $\rho\in[0,1)$ and $\beta\ge1$ such that
    \begin{equation}
    \E[||x(k)||_2^2|\sigma(k_0)]\le\beta \rho^{k-k_0}\E[||x_0||_2^2]
  \end{equation}
  holds for all $k\ge k_0$, all $k_0\ge0$, and all $x_0\in\mathbb{R}^n$ and all initial probability distribution $\pi(t_0)$ of the Markov process defined by \eqref{eq:DT:Markov2}.
\end{definition}

This leads to the following result\footnote{It was brought to the attention of the author after publication that this result was previously published in \citep{Dragan:06,Dragan:10}.} which can be seen as a generalization of the results in\citep{Costa:05}:}
\begin{theorem}\label{th:Markov:DT}
  \bblue{Assume that the matrix $\Pi=[\pi_{ij}]$ has no zero column.} Then, the system \eqref{eq:DT:Markov}-\eqref{eq:DT:Markov2} is \bblue{uniformly exponentially mean-square stable in conditioning} if and only if there exist matrix-valued functions $P_i,Q_i:\mathbb{Z}_{\ge0}\mapsto\pd^n$, $i=1,\ldots,M$, such that
  \begin{equation}
    \begin{array}{rcccl}
      \alpha_1I&\preceq&P_i(k)&\preceq&\alpha_2I,\\
      \beta_1I&\preceq&Q_i(k)&\preceq&\beta_2I,
    \end{array}
  \end{equation}
  and the difference equations
\begin{equation}\label{eq:dl;skdl;kasdk;ldk;sdk;ak:DT}
  A_i(k)^{\T}\left(\sum_{j=1}^M\pi_{ij}P_j(k+1)\right)A_i(k)+\sum_{\ell=1}^N B_{i,\ell}(k)^{\T}\left(\sum_{j=1}^M\pi_{ij}P_j(k+1)\right)B_{i,\ell}(k)-P_i(k)=-Q_i(k)
\end{equation}
hold for all $i=1,\ldots,M$.
\end{theorem}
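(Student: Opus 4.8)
The plan is to follow the blueprint of the continuous-time analogue, Theorem~\ref{th:Jump:CT}, and reduce the mean-square stability of the Markov jump system \eqref{eq:DT:Markov}-\eqref{eq:DT:Markov2} to the exponential stability of an associated block-diagonal matrix-valued difference equation of the form \eqref{eq:matdiffeqDT_LTV}, to which Theorem~\ref{th:stabMatDT_LTV} can be applied directly. The key objects are the partial second-order moments $X_i(k):=\E[x(k)x(k)^{\T}\mathds{1}_i(\sigma(k))]$, $i=1,\ldots,M$, whose traces sum to $\sum_{i=1}^M\trace X_i(k)=\E[\|x(k)\|_2^2]$, so that mean-square stability of the original system is equivalent to exponential stability (in the trace/nuclear norm) of the collection $\{X_i(k)\}$.

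First I would derive the recursion for the $X_i$. Conditioning on $\sigma(k)=i$, using that the jump of $\sigma$ is independent of the noise given $\sigma(k)$, and that the $\nu_\ell(k)$ are zero-mean, unit-variance and mutually independent, the cross terms vanish and one obtains
\begin{equation}
X_j(k+1)=\sum_{i=1}^M\pi_{ij}\left(A_i(k)X_i(k)A_i(k)^{\T}+\sum_{\ell=1}^NB_{i,\ell}(k)X_i(k)B_{i,\ell}(k)^{\T}\right),\quad j=1,\ldots,M.
\end{equation}
Here the weight $\pi_{ij}=\P(\sigma(k+1)=j\mid\sigma(k)=i)$ enters linearly, exactly as $\pi_{ji}$ enters the generator in the continuous-time case.

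Next I would stack the blocks into $X(k):=\diag_{i=1}^M(X_i(k))$ and rewrite the recursion in the form $X(k+1)=\sum_p\mathcal{J}_p(k)X(k)\mathcal{J}_p(k)^{\T}$ of \eqref{eq:matdiffeqDT_LTV}. Mirroring the Hadamard/Kronecker construction used for $\bar A_j$ in the proof of Theorem~\ref{th:Jump:CT}, I would decompose the transition pattern $i\mapsto j$ into the $M$ cyclic shifts, so that for each shift the associated operator is a $\sqrt{\pi_{ij}}$-weighted block permutation whose $(j,i)$ block equals $\sqrt{\pi_{ij}}\,A_i(k)$ (resp.\ $\sqrt{\pi_{ij}}\,B_{i,\ell}(k)$) only for the unique $j$ paired with $i$ under that shift. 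Because a weighted permutation has exactly one nonzero block per row and per column, each term $\mathcal{J}_p X\mathcal{J}_p^{\T}$ is again block-diagonal, so the lifted system genuinely evolves on $\psd^n\times\cdots\times\psd^n$ rather than on the full cone $\psd^{nM}$. This product-cone structure is what permits restricting the functions $P,Q$ supplied by Theorem~\ref{th:stabMatDT_LTV} to block-diagonal form $P(k)=\diag_i(P_i(k))$, $Q(k)=\diag_i(Q_i(k))$, exactly as in the continuous-time argument.

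Finally, I would evaluate the adjoint $\mathcal{D}_k^*$ of \eqref{eq:operatorLdtadjoint} on a block-diagonal $P$: summing the weighted-permutation operators over all shifts sends the $j$-blocks back to the $i$-block with weight $\pi_{ij}$, yielding $[\mathcal{D}_k^*(P(k+1))]_i=A_i(k)^{\T}\big(\sum_{j}\pi_{ij}P_j(k+1)\big)A_i(k)+\sum_\ell B_{i,\ell}(k)^{\T}\big(\sum_j\pi_{ij}P_j(k+1)\big)B_{i,\ell}(k)$. Substituting this into the Lyapunov difference equation $\mathcal{D}_k^*(P(k+1))-P(k)+Q(k)=0$ and reading off the $i$-th diagonal block reproduces the coupled conditions \eqref{eq:dl;skdl;kasdk;ldk;sdk;ak:DT}, while the uniform bounds on $P_i,Q_i$ transfer directly from Theorem~\ref{th:stabMatDT_LTV}. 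I expect the main obstacle to be the bookkeeping of the third step: verifying rigorously that the cyclic-shift/Hadamard decomposition simultaneously reproduces the correct $\pi_{ij}$ weights \emph{and} preserves block-diagonality, since a single naive routing operator with $(j,i)$ block $\sqrt{\pi_{ij}}A_i$ would generate spurious off-diagonal cross terms $\sqrt{\pi_{ij}\pi_{ij'}}\,A_iX_iA_i^{\T}$ for $j\neq j'$.
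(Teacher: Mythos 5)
Your proposal is correct and follows essentially the same route as the paper's proof: the partial second-order moments $X_i(k)=\E[x(k)x(k)^{\T}\mathds{1}_i(\sigma(k))]$, the block-diagonal lifting $X(k)=\diag_{i=1}^M(X_i(k))$, the cyclic-shift/Hadamard decomposition (the paper's $\bar{A}_i=A^\pi\odot E_i$, $\bar{B}_{i,\ell}=B^\pi_\ell\odot E_i$ with $E_i$ the cyclic column patterns) to cast the dynamics in the form \eqref{eq:matdiffeqDT_LTV}, the restriction to block-diagonal $P,Q$ justified by the product-cone state space, and the block-wise evaluation of $\mathcal{D}_k^*$ to recover \eqref{eq:dl;skdl;kasdk;ldk;sdk;ak:DT}. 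The pitfall you flag in your last sentence (spurious off-diagonal cross terms from a single routing operator) is precisely what the paper's $E_i$-masked decomposition avoids, so your bookkeeping concern is already resolved by the construction you describe.
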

\begin{proof}
  Let $\mathds{1}_i(\sigma(k))$ be the indicator function of the state $i$ defined as $\mathds{1}_i(\sigma(k))=1$  if $\sigma(k)=i$, and 0 otherwise. Then, defining $X_i(k)=\E[x(k)x(k)^{\T}\mathds{1}_i(\sigma(k))]$, we get that
\begin{equation}
  X_i(k+1)=\sum_{j=1}^M\pi_{ji}\left(A_j(k)X_j(k)A_j(k)^{\T}+\sum_{\ell=1}^NB_{j,\ell}(k)X_j(k)B_{j,\ell}(k)^{\T}\right)
\end{equation}
for all $i=1,\ldots,M$. Let $E_i$ be the matrix composed of the columns $\{e_1,\ldots,e_M\}$ in cyclic ascending order where $e_1$ is the $i$-th column and define the matrices $A^\pi(k)$ and $B^\pi_\ell(k)$ as
\begin{equation}
  \begin{array}{rcl}
  \ [A^\pi(k)]_{ij}&=&\sqrt{\pi_{ji}}A_{j}(k),\\
  \ [B_\ell^\pi(k)]_{ij}&=&\sqrt{\pi_{ji}}B_{j,\ell}(k).
  \end{array}
\end{equation}
Then, we have that
\begin{equation}
  \diag_{i=1}^M\left(\sum_{j=1}^M\pi_{ji}A_{j}(k)X_j(k)A_{j}(k)^{\T}\right)=\sum_{i=1}^M\bar{A}_{i}(k)X(k)\bar{A}_{i}(k)^{\T}
\end{equation}
and
\begin{equation}
  \diag_{i=1}^M\left(\sum_{j=1}^M\pi_{ji}B_{j,\ell}(k)X_j(k)B_{j,\ell}(k)^{\T}\right)=\sum_{i=1}^M\bar{B}_{i,\ell}(k)X(k)\bar{B}_{i,\ell}(k)^{\T}
\end{equation}
where $X(t):=\textstyle\diag_{i=1}^M\{X_i(t)\}$, $\bar{A}_{i}(k)=A^\pi(k)\odot E_i$ and $\bar{B}_{i,\ell}=B^\pi_\ell\odot E_i$. Then, one can rewrite the above $M$ coupled matrix-valued differential equations as
\begin{equation}
  X(k+1)=\sum_{i=1}^N\left(\bar{A}_i(k)X(k)\bar{A}_i(k)^{\T}+\sum_{\ell=1}^N\bar{B}_{i,\ell}(k)X(k)\bar{B}_{i,\ell}(k)^{\T}\right).
\end{equation}
Using Theorem \ref{th:stabMatDT_LTV}, we obtain the following stability condition for the above dynamical system
\begin{equation}\label{eq:kdsl;kd;sakl;dkas;dk;salkd;ladkl;sakd;sak;d}
  \sum_{i=1}^M\left(\bar{A}_i(k)^{\T}P(k+1)\bar{A}_i(k)+\sum_{\ell=1}^N\bar{B}_{i,\ell}(k)^{\T}P(k+1)\bar{B}_{i,\ell}(k)\right)-P(k)=-Q(k)
\end{equation}
for some matrix-valued functions $P,Q$ satisfying the conditions in Theorem \ref{th:stabMatDT_LTV}. As the state of this system has a block-diagonal structure, the state space is actually $\mathbb{S}^n_{\succeq0}\times\ldots\mathbb{S}^n_{\succeq0}$ ($M$ times), and not the full space $\mathbb{S}^{nM}_{\succeq0}$. Therefore, we can consider, without introducing any conservatism, block-diagonal matrix-valued functions of the form $\textstyle P(k)=\diag_{i=1}^M(P_i(k))$ and $\textstyle Q(k)=\diag_{i=1}^M(Q_i(k))$ where $P_i(k),Q_i(k)\in\mathbb{S}_{\succ0}^n$, $i=1,\ldots,M$, all satisfy the boundedness conditions of Theorem \ref{th:stabMatDT_LTV}. Using this structure, we obtain
\begin{equation}
  \sum_{i=1}^M\bar{A}_i(k)^{\T}P(k+1)\bar{A}_i(k)=\diag_{i=1}^M\left(A_i(k)^{\T}\left(\sum_{j=1}^M\pi_{ij}P_j(k+1)\right)A_i(k)\right)
\end{equation}
and
\begin{equation}
  \sum_{i=1}^M\bar{B}_{i,\ell}(k)^{\T}P(k+1)\bar{B}_{i,\ell}(k)=\diag_{i=1}^M\left(B_{i,\ell}(k)^{\T}\left(\sum_{j=1}^M\pi_{ij}P_j(k+1)\right)B_{i,\ell}(k)\right).
\end{equation}
Considering, finally, the above expressions and expanding the other terms in \eqref{eq:kdsl;kd;sakl;dkas;dk;salkd;ladkl;sakd;sak;d} yields the conditions \eqref{eq:dl;skdl;kasdk;ldk;sdk;ak:DT}. The proof is now completed.
\end{proof}}

\section{Linear Matrix-Valued Symmetric Impulsive Systems}\label{sec:Hybrid}

\blue{The objective of this section is to merge the results obtained in the case of continuous-time and discrete-time all together to obtain analogous stability conditions in the hybrid setting. Linear matrix-valued symmetric impulsive systems that evolves on the cone of positive semidefinite matrices are first introduced. A necessary and sufficient condition for their uniform exponential stability is then obtained in Theorem \ref{th:Main:Imp} in terms of a matrix-differential equation or a matrix-differential inequality. Relaxed stability conditions, analogous to those considered in \citep{Goebel:12,Briat:19:Linf} are then obtained in Theorem \ref{th:Main:Imp:persflow} and Theorem \ref{th:Main:Imp:persjump}. Convex stabilization conditions are obtained in Theorem \ref{th:Main:Imp:stabz} and take the form of differential-different linear matrix inequalities. The simplicity and the versatility of the approach as a general tool for the analysis of (stochastic) continuous-time systems is illustrated through several examples, namely on the analysis of stochastic LTV systems with deterministic impulses times \citep{Michel:08,Goebel:12,Briat:15i}, stochastic LTV switched systems with deterministic switchings \citep{Michel:08,Goebel:12,Briat:14f,Shaked:14,Briat:15i}, sampled-data stochastic LTV systems  \citep{Michel:08,Goebel:12,Briat:15i}, and stochastic LTV impulsive systems with stochastic impulses and switchings \citep{Souza:21}.}

\subsection{Preliminaries}

We consider here the following class of matrix-valued impulsive dynamical systems
\begin{equation}\label{eq:matdiffeqHybridLTV}
\begin{array}{rcl}
  \dot{X}(t)&=&A_0(t) X(t)+X(t)A_0(t)^\T+\sum_{i=1}^NA_i(t) X(t)A_i(t)^\T+\mu X(t),t\ne t_k,t\ge t^0\\
  X(t^+)&=&\sum_{i=0}^NJ_i(k) X(t)J_i(k)^\T,\ t=t_k, t\ge t^0\\
  X(t^0)&=&X_0\in\psd^n
\end{array}
\end{equation}
where $X(t),X_0\in\psd^n$ is the state of the system and the initial condition, and $t^0\ge0$ is the initial time. The matrix-valued functions $A_i:\mathbb{R}_{\ge0}\mapsto\mathbb{R}^{n\times n}$, $i=1,\ldots,N$, are assumed to be piecewise continuous and uniformly bounded whereas the matrix-valued functions $J:\mathbb{Z}_{\ge0}\mapsto\mathbb{R}^{n\times n}$ are assumed to be uniformly bounded. The sequence of impulse instants $\{t_k\}_{\ge1}$ is assumed to be strictly increasing and to grow unboundedly; i.e. $t_{k+1}>t_k$, $k\ge1$ and $t_k\to\infty$ as $k\to\infty$. Under those assumptions on the data of the system, the impulse sequence and the linearity of the system, we know that there exists a unique complete (i.e. defined for all times $t\ge t^0$) solution, which we denote by $X(t,t^0,X_0)$. This solution can expressed in terms of a state-transition operator corresponding to the generators $\mathcal{C}_t$ and $\mathcal{D}_k$ defined in the previous sections, that is we have that
\begin{equation}
  X(t)=S(t,s)X(s),\ t\ge s\ge t^0
\end{equation}
where
\begin{equation}
  \dfrac{\partial}{\partial t}S(t,s)X=\mathcal{C}_t(S(t,s)X),\ S(s,s)=I,\ t_{k+1}\ge t\ge s>t_k,\ k\ge1
\end{equation}
and
\begin{equation}
  S(t,t_k)X=S(t,t_k^+)\circ\mathcal{D}_k(X),\ t\in(t_k,t_{k+1}],\ k\ge1.
\end{equation}

We will consider in this section the following notion of stability for the system \eqref{eq:matdiffeqHybridLTV}:
\begin{definition} \label{def:HybridsystemMat}
The zero solution of the hybrid system \eqref{eq:matdiffeqHybridLTV} is \textbf{globally uniformly exponentially stable with hybrid rate $(\alpha,\rho)$} if there exist some constants $\beta\ge1$, $\alpha>0$ and $\rho\in(0,1)$ such that
  \begin{equation}
         ||X(t,t^0,X_0)||\le\beta\rho^{\kappa(t,t^0)}e^{-\alpha(t-t^0)}||X_0||
  \end{equation}
  for all $t\ge t^0$, all $t^0\in\Rnn$, and all $X_0\in\psd^n$ where $\kappa(t,t^0)$ denotes the number of jumps in the interval $[t^0,t]$.
\end{definition}

It is interesting to note that this stability notion may be relaxed to the cases $\alpha>0$, $\rho=1$, and $\alpha=0$, $\rho\in(0,1)$ which consists to the case of persistent flowing and persistent jumping, respectively (see e.g. \citep{Goebel:12}) under some additional conditions on the impulse time sequences. Indeed, persistent flowing requires that the time spent by the system flowing is infinite while persistent jumping requires that all the impulses arrive in finite time; i.e. $t_{k+1}-t_k$ is finite for all $k\ge0$, $t_0=t^0$. This will be further discussed in the next section.

\subsection{Main stability result}

\black{We have the following result:
\begin{theorem}\label{th:generalImpulsiveSystems:jdksjdlasdjakljdlsa}\label{th:Main:Imp}
    The following statements are equivalent:
  \begin{enumerate}[(a)]
    %
    \item The matrix-valued linear dynamical system \eqref{eq:matdiffeqHybridLTV} is globally uniformly exponentially stable with hybrid rate $(\alpha,\rho)$ for some $\alpha>0$ and $\rho\in(0,1)$.
    %
    \item There exist a piecewise differentiable matrix-valued function $P:\mathbb{R}_{\ge0}\mapsto\mathbb{S}^n_{\succ0}$, a piecewise continuous matrix-valued function $Q:\mathbb{R}_{\ge0}\mapsto\mathbb{S}_{\succ0}^n$, and a matrix-valued function $R:\mathbb{Z}_{\ge0}\mapsto\mathbb{S}_{\succ0}^n$ such that
    \begin{equation}
    \begin{array}{rcccl}
      \alpha_1I&\preceq& P(t)&\preceq &\alpha_2I,\\
      \beta_1I&\preceq &Q(t)&\preceq &\beta_2I,\\
      \gamma_1I&\preceq &R(k)&\preceq &\gamma_2I,
    \end{array}
    \end{equation}
        such that
    \begin{equation}\label{eq:SecondOrderMomentImpulsiveLyapunov1}
      \begin{array}{rclcl}
        \dot{P}(t)+\mathcal{C}_t(P(t))&=&-Q(t),&& t\ne t_k,\ t\ge t^0,\ k\ge1\\
        \mathcal{D}_k(P(t_k^+))-P(t_k)&=&-R(k),&& t_k>t^0,\ k\ge1
      \end{array}
    \end{equation}
    hold for some scalars $\alpha_1,\alpha_2,\beta_1,\beta_2,\gamma_1,\gamma_2>0$.
     \item There exists a piecewise differentiable matrix-valued function $P:\mathbb{R}_{\ge0}\mapsto\mathbb{S}^n_{\succ0}$ such that
     \begin{equation}
       \alpha_1 I\preceq P(t)\preceq \alpha_2I,
     \end{equation}
     and
    \begin{equation}\label{eq:SecondOrderMomentImpulsiveLyapunov2}
      \begin{array}{rclcl}
         \dot{P}(t)+\mathcal{C}_t(P(t))&\preceq&-\alpha_3I,&& t\ne t_k,\ t\ge t^0,\ k\ge1\\
        \mathcal{D}_k(P(t_k^+))-P(t_k)&\preceq&-\alpha_4I,&& t_k>t^0,\ k\ge1
      \end{array}
    \end{equation}
    hold for some scalars $\alpha_1,\alpha_2,\alpha_3,\alpha_4,\beta_1,\beta_2,\gamma_1,\gamma_2>0$.
  \end{enumerate}
\end{theorem}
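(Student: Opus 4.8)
The plan is to fuse the two Lyapunov arguments already carried out in Theorem \ref{th:stabMatCT_LTV} (for the flow) and Theorem \ref{th:stabMatDT_LTV} (for the jumps), since \eqref{eq:matdiffeqHybridLTV} is precisely their hybrid splicing. First, the equivalence of (b) and (c) is immediate in both directions: given (b), the bounds $Q(t)\succeq\beta_1 I$ and $R(k)\succeq\gamma_1 I$ give the inequalities of (c) with $\alpha_3=\beta_1$ and $\alpha_4=\gamma_1$; conversely, given (c), I would set $Q(t):=-(\dot P(t)+\mathcal{C}_t^*(P(t)))$ and $R(k):=P(t_k)-\mathcal{D}_k^*(P(t_k^+))$, the required lower bounds coming from (c) and the upper bounds from the uniform boundedness of $P$ and of the data defining $\mathcal{C}_t$ and $\mathcal{D}_k$.

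For (b)$\Rightarrow$(a) I would take $V(t,X)=\langle P(t),X\rangle$. By Proposition \ref{prop:PQ} and $\alpha_1 I\preceq P(t)\preceq\alpha_2 I$ this $V$ is $\psd^n$-copositive definite, radially unbounded and decrescent, with $\alpha_1||X||_*\le V\le\alpha_2||X||_*$. On flow intervals, exactly as in Theorem \ref{th:stabMatCT_LTV}, $\dot V=\langle\dot P(t)+\mathcal{C}_t^*(P(t)),X\rangle=-\langle Q(t),X\rangle\le-\tfrac{\beta_1}{\alpha_2}V$. Across a jump, as in Theorem \ref{th:stabMatDT_LTV}, $V(t_k^+,X(t_k^+))=\langle\mathcal{D}_k^*(P(t_k^+)),X(t_k)\rangle=V(t_k,X(t_k))-\langle R(k),X(t_k)\rangle\le(1-\tfrac{\gamma_1}{\alpha_2})V(t_k,X(t_k))$, and nonnegativity of $V$ forces the contraction factor into $[0,1)$. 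Multiplying the per-flow and per-jump estimates over $[t^0,t]$ gives $V(t,X(t))\le(1-\tfrac{\gamma_1}{\alpha_2})^{\kappa(t,t^0)}e^{-\frac{\beta_1}{\alpha_2}(t-t^0)}V(t^0,X_0)$, which after translating through the bounds on $V$ is the hybrid-rate estimate of Definition \ref{def:HybridsystemMat}.

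For the converse (a)$\Rightarrow$(b) I would fix bounded $Q:\mathbb{R}_{\ge0}\to\pd^n$ and $R:\mathbb{Z}_{\ge0}\to\pd^n$ and define the cost-to-go
\begin{equation}
\langle\bar P(t),X\rangle:=\int_t^\infty\langle Q(s),X(s)\rangle\ds+\sum_{t_k>t}\langle R(k),X(t_k)\rangle,
\end{equation}
where $X(\cdot)=S(\cdot,t)X$ is the solution issued from $X$ at time $t$. Using the hybrid-rate bound from (a), the integrand is dominated by $e^{-\alpha(s-t)}$ while the $j$-th jump after $t$ carries a factor $\rho^{j}$, so $\rho<1$ makes both the integral and the series converge and yields $\bar P(t)\preceq\alpha_2 I$. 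Differentiating the cost-to-go along a trajectory on a flow interval and telescoping it across a jump returns, for all $X\succeq0$, the identities $\langle\dot{\bar P}(s)+\mathcal{C}_s^*(\bar P(s))+Q(s),X\rangle=0$ and $\langle\mathcal{D}_k^*(\bar P(t_k^+))-P(t_k)+R(k),X\rangle=0$, which by Proposition \ref{prop:PQ} are exactly the two Lyapunov equations of statement (b).

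The hard part will be the uniform lower bound $\bar P(t)\succeq\alpha_1 I$, because a jump $X(t_k^+)=\mathcal{D}_k(X(t_k))$ can shrink the state arbitrarily, so the flow cost alone is not bounded below as in the continuous-time proof. The resolution is to play the jump cost $R$ against boundedness of the flow generator: since $\mathcal{C}_t$ is uniformly bounded, a trace (nuclear-norm) estimate gives $||X(s)||_*\ge e^{-L(s-t)}||X(t)||_*$ throughout any flow interval, for a uniform $L>0$. Fixing a horizon $T>0$ and splitting on whether $[t,t+T]$ contains a jump, the flow-only case yields $\langle\bar P(t),X\rangle\ge\beta_1\tfrac{1-e^{-LT}}{L}||X(t)||_*$ from the integral, whereas if the first jump occurs at $t_k\le t+T$ then $\langle\bar P(t),X\rangle\ge\langle R(k),X(t_k)\rangle\ge\gamma_1 e^{-LT}||X(t)||_*$, the state having only flowed on $[t,t_k]$. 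Taking $\alpha_1:=\min\{\beta_1(1-e^{-LT})/L,\ \gamma_1 e^{-LT}\}$ then delivers the lower bound with no dwell-time assumption, completing the construction.
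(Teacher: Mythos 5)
Your proposal is correct in substance and, for most of its length, follows the paper's own architecture: the sufficiency direction via $V(t,X)=\langle P(t),X\rangle$ with the flow estimate $\dot V\le-\tfrac{\beta_1}{\alpha_2}V$ and the jump estimate $V(t_k^+,\cdot)\le(1-\tfrac{\gamma_1}{\alpha_2})V(t_k,\cdot)$, and the necessity direction via the same cost-to-go $\bar P$, the same geometric-series upper bound, and the same differentiate-and-telescope verification of the two Lyapunov equations. Where you genuinely diverge is precisely the step you flag as hard, the uniform lower bound $\bar P(t)\succeq\alpha_1 I$. The paper obtains it by a comparison argument: it assumes $Q$ piecewise differentiable with bounded derivative, shows $\tfrac{\d}{\d s}\langle Q(s),X(s)\rangle\ge-c\langle Q(s),X(s)\rangle$ along the flow for $c$ large, telescopes over all flow intervals, and then absorbs the jump terms by enlarging $c$ so that $Q(t_i)-cR(i)-\mathcal{D}_i^*(Q(t_i^+))\prec0$ (possible since the $J_j$ are uniformly bounded), concluding $\bar P(t)\succeq Q(t)/c$. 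Your route --- the trace estimate $||X(s)||_*\ge e^{-L(s-t)}||X(t)||_*$ on flow intervals, which is valid because $\tfrac{\d}{\d s}\trace X(s)=\langle\mathcal{C}_s^*(I),X(s)\rangle\ge-L\trace X(s)$ with $L$ uniform by boundedness of the data, combined with a fixed-horizon case split on whether $[t,t+T]$ contains a jump --- is equally rigorous, needs no dwell-time assumption either, and has a concrete advantage: it requires no differentiability of $Q$ at all, an assumption the paper has to make and then argue away.

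Two small repairs are needed. First, in your direct argument for (c)$\Rightarrow$(b), the upper bound $Q(t)\preceq\beta_2I$ does \emph{not} follow from boundedness of $P$ and of the data: condition (c) bounds $\dot P(t)+\mathcal{C}_t^*(P(t))$ only from above, so $-\dot P(t)$, and hence $Q(t):=-(\dot P(t)+\mathcal{C}_t^*(P(t)))$, can be unbounded above (take $P$ bounded with a wildly oscillating derivative; the inequality in (c) is preserved). The bound on $R(k)$ is fine since $P$ and the $J_i$ are bounded. The fix is cheap: route the implication as (c)$\Rightarrow$(a)$\Rightarrow$(b), observing that your Lyapunov computation for (b)$\Rightarrow$(a) uses only the lower bounds $\alpha_3,\alpha_4$ and so applies verbatim under (c). Second, your jump-cost sum should run over $t_i\ge t$ (as in the paper), not $t_k>t$: with the paper's convention $S(t,t_k)X=S(t,t_k^+)\circ\mathcal{D}_k(X)$, a trajectory issued at a jump time undergoes that jump, and under your strict convention $\bar P(t_k)$ omits the term $\langle R(k),X(t_k)\rangle$, so the telescoped identity at $t=t_k$ returns $\mathcal{D}_k^*(\bar P(t_k^+))-\bar P(t_k)=0$ rather than $-R(k)$; inclusivity is also what makes your jump-case lower bound valid when $t$ is itself a jump time. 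With these two adjustments the proposal is a complete and correct proof.
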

\begin{proof}
The statements (b) and (c) are readily seen to be equivalent. Therefore, we need to focus on the connection with the statement (a).\\

  \noindent\textbf{Proof that the statement (b) implies the statement (a).} Assume that the condition of the statement (b) hold and define the function $V(t,X)=\langle P(t),X\rangle$. From the conditions in statement (b), this function is $\psd^n$-copositive definite, radially unbounded and decrescent. Computing now its derivative along the flow of the system \eqref{eq:matdiffeqHybridLTV} yields
  \begin{equation}
    \begin{array}{rcl}
      \dot{V}(t,X(t))   &=&     \langle \dot{P}(t),X(t)\rangle+\langle P(t),\mathcal{C}_t(X(t))\rangle\\
                                    &=&     \langle \dot{P}(t)+\mathcal{C}^*_t(P(t)), X(t)\rangle\\
                                    &=&     -\langle Q(t),X(t)\rangle\\
                                    &\le& -\dfrac{\beta_1}{\alpha_2}V(t,X(t)).
    \end{array}
  \end{equation}
  Similarly, evaluating the discrete-time derivatives at jump times yields
  \begin{equation}
        \begin{array}{rcl}
      V(t_k^+,X(t_k^+))-V(t_k,X(t_k))   &=&     \langle P(t_k^+),\mathcal{D}_k(X(t_k))\rangle-\langle P(t_k),X(t_k)\rangle\\
                                                                    &=&         \langle \mathcal{D}_k^*(P(t_k^+))-P(t_k),X(t_k)\rangle\\
                                                                    &=&         - \langle R(k),X(t_k)\rangle\\
                                                                    &\le&-\dfrac{\gamma_1}{\alpha_2}V(t_k,X(t_k)).
    \end{array}
  \end{equation}
  Letting $\alpha:=\dfrac{\beta_1}{\alpha_2}>0$ and $\rho=1-\dfrac{\gamma_1}{\alpha_2}\in(0,1)$, we get that
  \begin{equation}
    \begin{array}{rcl}
      \dot{V}(t,X(t))&\le&-\alpha V(t,X(t)),\  t\ne t_k,\ t\ge t^0,\ k\ge1\\
      V(t_k^+,X(t_k^+))&\le&\rho V(t_k,X(t_k)),\ t_k>t^0,\ k\ge1.
    \end{array}
  \end{equation}
  Integrating this expression yields
  \begin{equation}
    V(t,X(t))\le V(0,X_0)e^{-\alpha(t-t_0)}\rho^{\kappa(t,t_0)}
  \end{equation}
   which implies
   \begin{equation}
     ||X(t,t_0,X_0)||\le\dfrac{\alpha_2}{\alpha_1}e^{-\alpha(t-t_0)}\rho^{\kappa(t,t_0)}||X_0||
   \end{equation}
   which proves the uniform hybrid exponential convergence. This proves the desired implication.\\

  \noindent\textbf{Proof that the statement (a) implies the statement (c).} Assume that the conditions of statement (a) hold and define $\bar P(t)$ as
  \begin{equation}\label{eq:explicitP(t)}
     \langle \bar P(t),X(t)\rangle=\int_t^\infty \langle Q(s),X(s)\rangle\ds+\sum_{t\le t_i}\langle R(i),X(t_i)\rangle
  \end{equation}
  We need to show first that $||P(t)||$ is uniformly bounded and bounded away from zero. From the definition of uniform hybrid exponential stability, i.e. Definition \ref{def:HybridsystemMat}, we have that
  \begin{equation}
  \begin{array}{rcl}
    \langle \bar P(t),X(t)\rangle&=&\lim_{\tau\to\infty}\left[\int_t^\tau \langle Q(s),S(s,t)X(t)\rangle\ds+\sum_{\tau>t_i\ge t}\langle R(i),S(t_i,t)X(t)\rangle \right]\\
    &\le& M||X(t)||_*\lim_{\tau\to\infty} \left(\beta_{X,2}\int_t^\tau e^{-\alpha(s-t)}\rho^{\kappa(s,t)}\ds+\beta_{Y,2}\sum_{\tau>t_i\ge t}\rho^{\kappa(t_i,t)}e^{-\alpha(t_i-t)}\right)\\
    &\le&M||X(t)||_*\lim_{\tau\to\infty}\left(\beta_{X,2}\int_t^\tau e^{-\alpha(s-t)}\ds+\beta_{Y,2}\sum_{\tau>t_i\ge t}\rho^{\kappa(t_i,t)}\right)\\
    &=&M||X(t)||_*\left(\dfrac{\beta_{X,2}}{\alpha}+\dfrac{\beta_{Y,2}}{1-\rho}\right).
  \end{array}
  \end{equation}
  Hence, there exists an $\alpha_2>0$ such that $\bar P(t)\preceq\alpha_2I$.\\

\noindent Let us now prove that there exists an $\alpha_1>0$ such that $\bar P(t)\succeq\alpha_1I$. To this aim, we consider
\begin{equation}
\begin{array}{rcl}
  \dfrac{\d}{\d s}\langle Q(s),X(s)\rangle&=&\langle \dot{Q}(s),X(s)\rangle +\langle Q(s),\mathcal{C}_s(X(s))\rangle\\
  &=& \langle \dot{Q}(s),X(s)\rangle +\langle \mathcal{C}_s^*(Q(s)),X(s)\rangle\\
  &=& \langle \dot{Q}(s)+\mathcal{C}_s^*(Q(s)),X(s)\rangle
\end{array}
\end{equation}
where we have assumed that $Q(s)$ is piecewise differentiable with bounded derivative. This is not a restrictive assumption as it can be chosen as a constant without loss of generality. Now, using the fact that $Q(s)$ is bounded with bounded derivative, then there exists a large enough $c>0$ such that
\begin{equation}
\langle \dot{Q}(s)+\mathcal{C}_s^*(Q(s)),X(s)\rangle \ge-c\langle  Q(s),X(s)\rangle.
\end{equation}
This implies that for some sufficiently large $c>0$, we have that
\begin{equation}
\dfrac{\d}{\d s}\langle Q(s),X(s)\rangle \ge-c\langle  Q(s),X(s)\rangle
\end{equation}
for all $X(s)\in\psd^n$ and for all $t_k< s\le t\le t_{k+1}$, $k\in\mathbb{Z}_{\ge0}$.\\

\noindent Now assume that $t_{k-1}<t\le t_{k}$ and observe that
\begin{equation}
\begin{array}{rcccl}
   \int_{t}^{t_k}\dfrac{\d}{\d s}\langle Q(s),X(s)\rangle\ds&=&\langle Q(t_k),X(t_k)\rangle - \langle Q(t),X(t)\rangle&\ge & -c\int_{t}^{t_k}\langle Q(s),X(s)\rangle \ds\\
   \int_{t_i}^{t_{i+1}}\dfrac{\d}{\d s}\langle Q(s),X(s)\rangle\ds&=&\langle Q(t_{i+1}),X(t_{i+1})\rangle-\langle Q(t_{i}^+),X(t_{i}^+)\rangle&\ge & -c\int_{t_i}^{t_{i+1}}\langle Q(s),X(s)\rangle \ds.
\end{array}
\end{equation}
Summing all the above terms gives
\begin{equation}\label{eq:kds;kdksl;kad;ls;ddsdsddsdsk}
  -\langle Q(t),X(t)\rangle+\sum_{i=k}^\infty\left(\langle Q(t_i),X(t_{i})\rangle-\langle Q(t_i^+),X(t_{i}^+)\rangle\right)  \ge -c\int_{t}^{\infty}\langle Q(s),X(s)\rangle \ds.
\end{equation}
Adding $-c\sum_{t_i\ge t}\langle R(i),X(t_i)\rangle=-c\sum_{i=k}^\infty\langle R(i),X(t_i)\rangle$ on both sides yields
\begin{equation}
   -\langle Q(t),X(t)\rangle+\sum_{i=k}^\infty\left(\langle Q(t_i),X(t_{i})\rangle-\langle Q(t_i^+),X(t_{i}^+)\rangle\right)-c\sum_{i=k}^\infty\langle R(i),X(t_i)\rangle  \ge -c\langle \bar P(t),X(t)\rangle.
\end{equation}
where we have used the fact that $t_{k-1}<t\le t_{k}$. Noting that $X(t_{i}^+)=\mathcal{D}_i(X(t_{i}))$, then we get that
\begin{equation}
\begin{array}{rcl}
  \sum_{i=k}^\infty\left(\langle Q(t_i),X(t_{i})\rangle-\langle Q(t_i^+),X(t_{i}^+)\rangle-c\langle R(i),X(t_i)\rangle\right)&=& \sum_{i=k}^\infty\left(\langle Q(t_i)-cR(i),X(t_{i})\rangle-\langle Q(t_i^+),\mathcal{D}_i(X(t_{i}))\rangle\right)\\
  &=& \sum_{i=k}^\infty  \langle Q(t_i)-cR(i)-\mathcal{D}_i^*(Q(t_{i}^+)),X(t_{i})\rangle
\end{array}
\end{equation}
Therefore, and since the matrix-valued functions $J_j$ are uniformly bounded, then there exists a large enough $c>0$ (which stays compatible with out previous choice for $c$) such that $Q(t_i)-cR(i)-\mathcal{D}_i^*(Q(t_{i}^+))\prec0$. Therefore, with such a $c>0$, we get that
\begin{equation}
  -\langle Q(t),X(t)\ge -c\langle \bar P(t),X(t)\rangle
\end{equation}
and, as a result, $\bar P(t)\succeq Q(t)/c\succeq \beta_1I/c=:\alpha_1I$. This proves the second inequality for $\bar{P}(t)$.\\

\noindent We now prove that $\bar P(t)$ in \eqref{eq:explicitP(t)} verifies the equalities in \eqref{eq:SecondOrderMomentImpulsiveLyapunov1}. Computing the derivative of $\bar P(t)$ with respect to time yields
    \begin{equation}
    \begin{array}{rcl}
      \dfrac{\d}{\dt}\langle \bar P(t),X(t)\rangle   &=& \langle \dot{\bar{P}}(t),X(t)\rangle+\langle \bar P(t),\dot X(t)\rangle  \\
                                                                                &=& \langle \dot{\bar{P}}(t),X(t)\rangle+\langle \bar P(t),\mathcal{C}_t(X(t))\rangle  \\
                                                                                &=& \langle \dot{\bar{P}}(t),X(t)\rangle+\langle \mathcal{C}_t^*(\bar P(t)),X(t)\rangle  \\
                                                                                &=& -\langle Q(t),X(t)\rangle.
    \end{array}
  \end{equation}
  Therefore, $\langle \dot{\bar{P}}(t)+\mathcal{C}_t^*(\bar P(t))+Q(t),X(t)\rangle=0$ and \eqref{eq:explicitP(t)} verifies the first inequality in \eqref{eq:SecondOrderMomentImpulsiveLyapunov1} with $P=\bar P$. Similarly, evaluating $\bar P(t)$ at $t_k$ and $t_k^+$ yields
  \begin{equation}
    \begin{array}{rcl}
    \langle \bar P(t_k^+),X(t_k^+)\rangle&=&\sum_{i=k+1}^\infty\langle R(i),X(t_i)\rangle+\int_{t_k}^{\infty}\langle Q(s), X(s)\rangle\ds\\
    \langle \bar P(t_k),X(t_k)\rangle     &=&    \langle \bar P(t_k),X(t_k)\rangle\\
                                    &=& \sum_{i=k}^\infty \langle R(i),X(t_i)\rangle + \int_{t_k}^{\infty}\langle Q(s), X(s)\rangle\ds\\
                                    &=&   \langle R(k),X(t_k)\rangle + \sum_{i=k+1}^\infty \langle R(i),X(t_i)\rangle + \int_{t_k}^{\infty}\langle Q(s), X(s)\rangle\ds\\
                                    &=& \langle R(k),X(t_k)\rangle +  V(t_k^+,X(t_k^+)).
    \end{array}
  \end{equation}
  This implies that $\langle \bar P(t_k^+),X(t_k^+)\rangle-\langle \bar P(t_k),X(t_k)\rangle+ \langle R(k),X(t_k)\rangle=0$. However, we also have that
\begin{equation}
  \begin{array}{rcl}
    \langle \bar P(t_k^+),X(t_k^+)\rangle-\langle \bar P(t_k),X(t_k)\rangle+ \langle R(k),X(t_k)\rangle &=&\langle \bar P(t_k^+),\mathcal{D}_k(X(t_k))\rangle-\langle \bar P(t_k),X(t_k)\rangle+ \langle R(k),X(t_k)\rangle\\
                                                                                                                                                                    &=&\langle \mathcal{D}_k^*(\bar P(t_k^+)),X(t_k)\rangle+\langle -\bar P(t_k)+R(k),X(t_k)\rangle\\
                                                                                                                                                                    &=&\langle \mathcal{D}_k^*(\bar P(t_k^+))-\bar P(t_k)+R(k),X(t_k)\rangle\\
                                                                                                                                                                    &=&0,
  \end{array}
\end{equation}
which shows that $\bar P$ in \eqref{eq:explicitP(t)} also verifies the second inequality in \eqref{eq:SecondOrderMomentImpulsiveLyapunov1} with $P=\bar P$. This proves the desired result.
\end{proof}}

\subsection{Relaxed stability results}

\black{The above results can be relaxed in two possible ways. The first one, called persistent flowing relaxation, relaxes the strict decrease of the Lyapunov function at jumps:
\begin{theorem}[Persistent flowing]\label{th:generalImpulsiveSystems:jdksjdlasdjakljdlsa:persflow}\label{th:Main:Imp:persflow}
    Assume that $\mathcal{D}_k(X)\preceq X$ for all $k\ge0$ and all $X\in\psd^n$. Then, the following statements are equivalent:
  \begin{enumerate}[(a)]
    %
    \item The matrix-valued linear dynamical system \eqref{eq:matdiffeqHybridLTV} is globally uniformly exponentially stable hybrid rate $(\alpha,1)$ for some $\alpha>0$.
    %
    \item There exist a piecewise differentiable matrix-valued function $P:\mathbb{R}_{\ge0}\mapsto\mathbb{S}^n_{\succ0}$ and a continuous matrix-valued function $Q:\mathbb{R}_{\ge0}\mapsto\mathbb{S}_{\succ0}^n$ such that
    \begin{equation}
    \begin{array}{rcccl}
      \alpha_1I&\preceq& P(t)&\preceq &\alpha_2I,\\
      \beta_1I&\preceq &Q(t)&\preceq &\beta_2I,
    \end{array}
    \end{equation}
such that
    \begin{equation}
      \begin{array}{rclcl}
         \dot{P}(t)+\mathcal{C}_t(P(t))&=&-Q(t),&& t\ne t_k,\ t\ge t^0,\ k\ge1\\
        \mathcal{D}_k(P(t_k^+))-P(t_k)&=&0,&& t_k>t^0,\ k\ge1
      \end{array}
    \end{equation}
     hold for some scalars $\alpha_1,\alpha_2,\beta_1,\beta_2>0$.
     \item There exists a piecewise differentiable matrix-valued function $P:\mathbb{R}_{\ge0}\mapsto\mathbb{S}^n_{\succ0}$ such that
         \begin{equation}
      \alpha_1I\preceq P(t)\preceq \alpha_2I
    \end{equation}
    and
    \begin{equation}
      \begin{array}{rclcl}
         \dot{P}(t)+\mathcal{C}_t(P(t))&\preceq&-\alpha_3I,&& t\ne t_k,\ t\ge t^0,\ k\ge1\\
        \mathcal{D}_k(P(t_k^+))-P(t_k)&\preceq&0,&& t_k>t^0,\ k\ge1
      \end{array}
    \end{equation}
    hold for some scalars $\alpha_1,\alpha_2,\alpha_3>0$.
  \end{enumerate}
\end{theorem}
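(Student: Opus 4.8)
The plan is to establish the cycle of implications following closely the template of Theorem \ref{th:Main:Imp}, while exploiting the two features that distinguish the persistent-flowing setting: the jump condition is an \emph{equality with zero} (there is no strict-decrease slack matrix $R$), and the hybrid rate is $(\alpha,1)$ so that jumps contribute no contraction factor. The equivalence of statements (b) and (c) is routine and is handled exactly as in Theorem \ref{th:Main:Imp}: given (c), set $Q(t):=-\dot P(t)-\mathcal{C}_t^*(P(t))\succeq\alpha_3 I$ and observe the jump term is already $\preceq 0$; conversely (b) trivially gives (c). So the real content is (b)$\Rightarrow$(a) and (a)$\Rightarrow$(b).

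For (b)$\Rightarrow$(a) I would take the copositive candidate $V(t,X)=\langle P(t),X\rangle$, which by Proposition \ref{prop:PQ} and the bounds $\alpha_1 I\preceq P(t)\preceq\alpha_2 I$ is $\psd^n$-copositive definite, radially unbounded and decrescent. On flows the computation is identical to Theorem \ref{th:Main:Imp}, giving $\dot V(t,X(t))=-\langle Q(t),X(t)\rangle\le-\tfrac{\beta_1}{\alpha_2}V(t,X(t))$, so $\alpha:=\beta_1/\alpha_2>0$. At a jump the equality $\mathcal{D}_k^*(P(t_k^+))-P(t_k)=0$ yields $V(t_k^+,X(t_k^+))-V(t_k,X(t_k))=\langle\mathcal{D}_k^*(P(t_k^+))-P(t_k),X(t_k)\rangle=0$, i.e. $V$ is merely \emph{non-increasing} (here, constant) across jumps. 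Integrating then gives $V(t,X(t))\le V(t^0,X_0)e^{-\alpha(t-t^0)}$ with no $\rho^{\kappa}$ factor, which translates into global uniform exponential stability with hybrid rate $(\alpha,1)$. Note this direction does not even require the standing assumption $\mathcal{D}_k(X)\preceq X$.

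For (a)$\Rightarrow$(b) I would reuse the integral construction of Theorem \ref{th:Main:Imp} but \emph{without} the jump sum, defining $\bar P(t)$ through $\langle\bar P(t),X\rangle=\int_t^\infty\langle Q_0,S(s,t)X\rangle\,\d s$ for a fixed constant $Q_0$ with $\beta_1 I\preceq Q_0\preceq\beta_2 I$. Differentiating along the flow reproduces $\dot{\bar P}(t)+\mathcal{C}_t^*(\bar P(t))+Q_0=0$, the first equation of (b). The elegant simplification is that the jump equality now holds \emph{automatically}: using the cocycle property $S(s,t_k)=S(s,t_k^+)\circ\mathcal{D}_k$ for $s>t_k$, one gets $\langle\bar P(t_k),X\rangle=\int_{t_k}^\infty\langle Q_0,S(s,t_k^+)\mathcal{D}_k(X)\rangle\,\d s=\langle\mathcal{D}_k^*(\bar P(t_k^+)),X\rangle$ for all $X\succeq 0$, hence $\mathcal{D}_k^*(\bar P(t_k^+))-\bar P(t_k)=0$. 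The upper bound is immediate from (a): since $\rho=1$ the transition operator satisfies $\|S(s,t)X\|_*\le Me^{-\alpha(s-t)}\|X\|_*$, so $\langle\bar P(t),X\rangle\le\beta_2 M\|X\|_*\int_t^\infty e^{-\alpha(s-t)}\,\d s=\tfrac{\beta_2 M}{\alpha}\|X\|_*$, giving $\bar P(t)\preceq\alpha_2 I$.

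The hard part will be the uniform lower bound $\bar P(t)\succeq\alpha_1 I$, and this is precisely where the hypothesis $\mathcal{D}_k(X)\preceq X$ must be used. Dualizing it via Proposition \ref{prop:PQ} gives $\mathcal{D}_k^*(Q_0)\preceq Q_0$, so that $\langle Q_0,X(t_i)-\mathcal{D}_i(X(t_i))\rangle\ge 0$ at every jump; thus the piecewise-constant map $s\mapsto\langle Q_0,X(s)\rangle$ can only decrease across jumps and, on flows, obeys $\tfrac{\d}{\d s}\langle Q_0,X(s)\rangle\ge -c\langle Q_0,X(s)\rangle$ for some $c>0$ by boundedness of the generator coefficients. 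The subtlety, absent in Theorem \ref{th:Main:Imp} where the slack $R$ let the jump terms be driven negative by a large $c$, is that here those terms have the opposite (favourable-looking but unusable) sign, so a naive telescoping bound does not close. The plan is therefore to extract the lower bound from a single flow portion, $\langle\bar P(t),X(t)\rangle\ge\int_t^{t_k}\langle Q_0,X(s)\rangle\,\d s\ge\langle Q_0,X(t)\rangle\tfrac{1-e^{-c(t_k-t)}}{c}$, which is uniform precisely when the flow intervals are bounded below; this is where the persistent-flowing admissibility condition on the impulse sequence anticipated in the Preliminaries enters. Once $\bar P(t)\succeq\alpha_1 I$ is secured, $P:=\bar P$ and $Q:=Q_0$ verify all requirements of statement (b), completing the cycle.
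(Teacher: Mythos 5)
Your overall architecture matches the paper's: (b)$\Rightarrow$(a) via the linear copositive function $V(t,X)=\langle P(t),X\rangle$ with $V$ constant across jumps, and (a)$\Rightarrow$(b) via the flow-only integral construction, with the jump equality coming for free from the cocycle property $S(s,t_k)=S(s,t_k^+)\circ\mathcal{D}_k$ (a step the paper leaves implicit). Your diagnosis of where the parent argument breaks is also exactly right: without the slack $R(k)$, the telescoped jump terms $\textstyle\sum_i\langle Q(t_i),X(t_i)-X(t_i^+)\rangle$ are \emph{nonnegative} under $\mathcal{D}_i(X)\preceq X$ and therefore cannot be discarded from the left-hand side of $-\langle Q(t),X(t)\rangle+\textstyle\sum_i(\cdot)\ge-c\langle\bar P(t),X(t)\rangle$; for what it is worth, the paper's own proof of Theorem \ref{th:Main:Imp:persflow} drops them anyway, so your reading of the difficulty is sharper than the published argument. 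The genuine gap is in your repair. The single-flow-portion bound gives $\langle\bar P(t),X(t)\rangle\ge\langle Q_0,X(t)\rangle\,(1-e^{-c(t_k-t)})/c$, and since $t$ ranges over all of $(t_{k-1},t_k]$, the factor $1-e^{-c(t_k-t)}$ tends to $0$ as $t\uparrow t_k$. A minimum dwell-time assumption does not help: the degeneracy occurs because $t$ approaches the \emph{next} jump, not because inter-jump intervals are short. Nor can you continue the integral past $t_k$: the hypothesis $\mathcal{D}_k(X)\preceq X$ permits $\mathcal{D}_k=0$, which annihilates the state, in which case $\bar P(t)\to0$ as $t\uparrow t_k$ and no uniform $\alpha_1$ exists for the flow-only construction at all. (In that same example the jump equality $\mathcal{D}_k^*(P(t_k^+))=P(t_k)$ would force $P(t_k)=0$, so the obstruction is intrinsic to the equality form of statement (b), not merely to your strategy.) Finally, the "persistent-flowing admissibility condition" you invoke is not an assumption of the theorem -- the Preliminaries only require infinite total flowing time -- so even where your bound closes, it proves a different statement.

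A secondary flaw: your claimed (c)$\Rightarrow$(b) is not routine, because (b) demands the exact jump equality $\mathcal{D}_k(P(t_k^+))-P(t_k)=0$ while (c) only provides $\preceq0$; defining the slack $Q(t):=-\dot P(t)-\mathcal{C}_t^*(P(t))$ repairs the flow equation but nothing converts the jump inequality into an equality. The cycle should instead be closed as (c)$\Rightarrow$(a) -- your Lyapunov computation goes through verbatim with $V$ merely nonincreasing at jumps -- followed by (a)$\Rightarrow$(b) via the integral construction, whose uniform lower bound remains, as explained above, the real unresolved point in both your proposal and the paper's own proof.
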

\begin{proof}
  The proof follows from the same lines as the proof of Theorem \ref{th:generalImpulsiveSystems:jdksjdlasdjakljdlsa}. The only difference is in the proof of the implication that the statement (a) implies the statement (c). In the current proof, we consider a continuous matrix-valued function $Q$ and define the matrix-valued function $\bar{P}(t)$ as
  \begin{equation}
    \langle\bar{P}(t),X(t)\rangle:=\int_t^\infty\langle Q(s),X(s)\rangle\ds.
  \end{equation}
  As a result, the expression \eqref{eq:kds;kdksl;kad;ls;ddsdsddsdsk} becomes
  \begin{equation}
  -\langle Q(t),X(t)\rangle+\sum_{i=k}^\infty\langle Q(t_i),X(t_{i})-X(t_{i}^+)\rangle  \ge -c \langle\bar{P}(t),X(t)\rangle.
\end{equation}
Under the assumption that $\mathcal{D}_i(X)\preceq X$, we then get that
\begin{equation}
  -\langle Q(t),X(t)\rangle\ge -c \langle\bar{P}(t),X(t)\rangle
\end{equation}
which shows, in turn, that $\bar{P}(t)\succeq \beta_1/cI$. The rest of the proof follows from the same lines.
\end{proof}}

 The second one, called persistent jumping relaxation, relaxes the strict decrease of the Lyapunov function along the flow of the system:
\black{\begin{theorem}[Persistent jumping]\label{th:generalImpulsiveSystems:jdksjdlasdjakljdlsa:persjump}\label{th:Main:Imp:persjump}
    Assume that $\mathcal{C}_t(X)\preceq0$ for all $t\ge 0$ and all $X\in\psd^n$. Then, the following statements are equivalent:
  \begin{enumerate}[(a)]
    %
    \item The matrix-valued linear dynamical system \eqref{eq:matdiffeqHybridLTV} is globally uniformly exponentially stable with hybrid rate $(0,\rho)$ for some $\rho\in(0,1)$.
    %
    \item There exist a piecewise differentiable matrix-valued function $P:\mathbb{R}_{\ge0}\mapsto\mathbb{S}^n_{\succ0}$ and a matrix-valued function $R:\mathbb{Z}_{\ge0}\mapsto\mathbb{S}_{\succ0}^n$ such that
    \begin{equation}
    \begin{array}{rcccl}
      \alpha_1I&\preceq& P(t)&\preceq &\alpha_2I,\\
      \gamma_1I&\preceq &R(k)&\preceq &\gamma_2I,
    \end{array}
    \end{equation}
    and
    \begin{equation}
      \begin{array}{rclcl}
         \dot{P}(t)+\mathcal{C}_t(P(t))&=&0, && t\ne t_k,\ t\ge t^0,\ k\ge1\\
        \mathcal{D}_k(P(t_k^+))-P(t_k)&=&-R(k),&& t_k>t^0,\ k\ge1
      \end{array}
    \end{equation}
    hold for some $\alpha_1,\alpha_2,\gamma_1,\gamma_2>0$.
     \item There exists a piecewise differentiable matrix-valued function $P:\mathbb{R}_{\ge0}\mapsto\mathbb{S}^n_{\succ0}$ such that
     \begin{equation}
      \alpha_1I \preceq P(t) \preceq  \alpha_2I
    \end{equation}
    and
    \begin{equation}
      \begin{array}{rclcl}
         \dot{P}(t)+\mathcal{C}_t(P(t))&\preceq&0,&& t\ne t_k,\ t\ge t^0,\ k\ge1\\
        \mathcal{D}_k(P(t_k^+))-P(t_k)&\preceq&-\alpha_3I,&& t=t_k, t\ge t^0
      \end{array}
    \end{equation}
    hold for some $\alpha_1,\alpha_2,\alpha_3>0$.
  \end{enumerate}
\end{theorem}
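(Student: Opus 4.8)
The plan is to follow the template of Theorem~\ref{th:Main:Imp} and its persistent-flowing companion Theorem~\ref{th:Main:Imp:persflow}, simply interchanging the roles played by the flow and the jumps. The equivalence of statements (b) and (c) is immediate. For the direction (b)$\Rightarrow$(a) I would take $V(t,X)=\langle P(t),X\rangle$, which is $\psd^n$-copositive definite, radially unbounded and decrescent by Proposition~\ref{prop:PQ} together with $\alpha_1 I\preceq P(t)\preceq\alpha_2 I$. Along the flow the equality $\dot P(t)+\mathcal{C}_t^*(P(t))=0$ gives $\dot V(t,X(t))=\langle\dot P(t)+\mathcal{C}_t^*(P(t)),X(t)\rangle=0$, so that $V$ is \emph{constant} between jumps, while at a jump $V(t_k^+,X(t_k^+))-V(t_k,X(t_k))=\langle\mathcal{D}_k^*(P(t_k^+))-P(t_k),X(t_k)\rangle=-\langle R(k),X(t_k)\rangle\le-\frac{\gamma_1}{\alpha_2}V(t_k,X(t_k))$. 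Setting $\rho:=1-\gamma_1/\alpha_2\in(0,1)$ and iterating over the $\kappa(t,t^0)$ jumps in $[t^0,t]$ yields $V(t,X(t))\le\rho^{\kappa(t,t^0)}V(t^0,X_0)$, i.e. global uniform exponential stability with hybrid rate $(0,\rho)$ as in Definition~\ref{def:HybridsystemMat}. Observe that the structural assumption $\mathcal{C}_t(X)\preceq0$ is not needed for this direction, since the equality already renders the flow $V$-neutral.

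For the converse (a)$\Rightarrow$(c), mirroring Theorem~\ref{th:Main:Imp} but dropping the integral term (the dual of dropping the jump sum in Theorem~\ref{th:Main:Imp:persflow}), I would define $\bar P(t)$ implicitly through
\begin{equation}
\langle\bar P(t),X(t)\rangle=\sum_{t_i\ge t}\langle R(i),X(t_i)\rangle,\quad R:\mathbb{Z}_{\ge0}\mapsto\pd^n,\ \gamma_1 I\preceq R(i)\preceq\gamma_2 I,
\end{equation}
which is well defined and linear in $X(t)$ because $X(t_i)=S(t_i,t)X(t)$. The upper bound $\bar P(t)\preceq\alpha_2 I$ follows as before: using $||S(t_i,t)X||_*\le M\rho^{\kappa(t_i,t)}||X||_*$ (here $\alpha=0$) one gets $\langle\bar P(t),X(t)\rangle\le\gamma_2 M||X(t)||_*\sum_{t_i\ge t}\rho^{\kappa(t_i,t)}$, and the geometric series converges precisely because $\rho\in(0,1)$ — this is where jump contraction rather than flow contraction is exploited. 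The two equalities of (b) then come for free: differentiating $\langle\bar P(t),X(t)\rangle$ between jumps gives $\langle\dot{\bar P}(t)+\mathcal{C}_t^*(\bar P(t)),X(t)\rangle=0$, while splitting off the first term at $t_k$ and using $X(t_k^+)=\mathcal{D}_k(X(t_k))$ gives $\langle\mathcal{D}_k^*(\bar P(t_k^+))-\bar P(t_k)+R(k),X(t_k)\rangle=0$; since both hold for all $X(t_k)\succeq0$, Proposition~\ref{prop:PQ} forces $\dot{\bar P}+\mathcal{C}_t^*(\bar P)=0$ and $\mathcal{D}_k^*(\bar P(t_k^+))-\bar P(t_k)=-R(k)$.

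The hard part will be the lower bound $\bar P(t)\succeq\alpha_1 I$, the exact dual of the lower-bound step of Theorem~\ref{th:Main:Imp:persflow}. I would extract, from the boundedness of the generators, a constant $c>0$ with $\langle\mathcal{C}_s^*(R),X(s)\rangle\ge-c\langle R,X(s)\rangle$ along the flow, so that $\langle R,X(s)\rangle$ cannot decay faster than $e^{-c(s-t)}$; keeping only the first jump term then gives, for $t\in(t_{k-1},t_k]$, $\langle\bar P(t),X(t)\rangle\ge\langle R(k),X(t_k)\rangle\ge\gamma_1 e^{-c(t_k-t)}||X(t)||_*$. The assumption $\mathcal{C}_t(X)\preceq0$ plays here the role that $\mathcal{D}_k(X)\preceq X$ plays in the persistent-flowing proof, ensuring that the discarded flow contribution carries the correct sign so that nothing is lost in passing to the first term. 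The genuine obstacle is \emph{uniformity}: obtaining a single $\alpha_1>0$ requires $t_k-t$ to be controlled, so the argument must invoke the persistent-jumping hypothesis that the inter-jump times $t_{k+1}-t_k$ are finite (and, for true uniformity, uniformly bounded) — the counterpart of demanding infinite total flow time in the persistent-flowing setting, and the one place I expect real care to be needed. Once this lower bound is secured, $\bar P$ meets all requirements of (c), with $\alpha_3>0$ coming from $R(k)\succeq\gamma_1 I$, which closes the cycle.
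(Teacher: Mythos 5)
Your proposal is correct and follows the paper's template on every step except the one genuinely delicate point, where it takes a different — and in fact sounder — route. The sufficiency direction (same $V(t,X)=\langle P(t),X\rangle$, constant along flows, contracting by $\rho=1-\gamma_1/\alpha_2$ at jumps), the jump-sum definition of $\bar P$, the geometric-series upper bound exploiting $\rho\in(0,1)$, and the verification of the two Lyapunov equations all coincide with the paper's proof, which explicitly reuses the machinery of Theorem~\ref{th:Main:Imp}. The divergence is in the lower bound $\bar P(t)\succeq\alpha_1 I$: the paper keeps the single term $\langle R(s_t),X(t_{s_t})\rangle$ with $s_t:=\min\{i:t\le t_i\}$ and invokes $\mathcal{C}_t(X)\preceq0$ to get $X(t_{s_t})\preceq X(t)$, from which it claims $\langle\bar P(t),X(t)\rangle\ge\langle R(s_t),X(t)\rangle$ — but as printed this points the wrong way, since $X(t_{s_t})\preceq X(t)$ only yields $\langle R(s_t),X(t_{s_t})\rangle\le\langle R(s_t),X(t)\rangle$ (the monotonicity argument would close if the flow were non-contracting, in which case it removes any dwell-time dependence, which is evidently its intended role). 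Your Gr\"onwall-type substitute — $\mathcal{C}_s^*(R(k))\succeq-cR(k)$ from boundedness of the coefficients and $\gamma_1I\preceq R(k)\preceq\gamma_2 I$, hence $\langle R(k),X(t_k)\rangle\ge\gamma_1e^{-c(t_k-t)}\|X(t)\|_*$ for $t\in(t_{k-1},t_k]$ — avoids the sign problem entirely, at the price you correctly identify: a uniform bound $\sup_k(t_{k+1}-t_k)<\infty$ on the inter-jump gaps, a hypothesis the paper only gestures at informally when introducing persistent jumping before Theorem~\ref{th:Main:Imp}, and without which your $\alpha_1$ degenerates. One correction to your own accounting, though: the truncation of the sum to its first term needs no structural hypothesis at all, since every summand $\langle R(i),X(t_i)\rangle$ is nonnegative ($R(i)\succ0$, $X(t_i)\succeq0$ by Proposition~\ref{prop:PQ}); so, contrary to your remark that $\mathcal{C}_t(X)\preceq0$ ensures ``the discarded flow contribution carries the correct sign,'' that assumption is in fact used nowhere in your argument — your dwell-time bound is what replaces it, which is precisely the trade between the two proofs.
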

\begin{proof}
    The proof follows from the same lines as the proof of Theorem \ref{th:generalImpulsiveSystems:jdksjdlasdjakljdlsa}. The only difference is in the proof of the implication that the statement (a) implies the statement (c). In the current proof, we define the matrix-valued function $\bar{P}(t)$ as
  \begin{equation}
     \langle \bar P(t),X(t)\rangle:=\sum_{t\le t_i}\langle R(i),X(t_i)\rangle
  \end{equation}
   Clearly, we have that $\langle \bar P(t),X(t)\rangle\ge\langle R(s_t),X(s_t)\rangle$ where $s_t:=\min\{i: t\le t_i\}$. Using now the assumption that $\mathcal{C}_t(X)\preceq0$ for all $t\ge 0$, we get that $X(s_t)\preceq X(t)$, which implies that
   \begin{equation}
      \langle \bar P(t),X(t)\rangle\ge\langle R(s_t),X(t)\rangle
   \end{equation}
   and that $\bar P(t)\succeq \gamma_1I$. This proves the lower bound for $\bar P$(t). The rest of the proof follows from the same lines as the proof of Theorem \ref{th:generalImpulsiveSystems:jdksjdlasdjakljdlsa}.
\end{proof}}

\subsection{Stabilization}


\blue{Let us consider here the perturbation of the system \eqref{eq:matdiffeqHybridLTV} given by
\begin{equation}\label{eq:matdiffeqHybridLTV:stabz}
\begin{array}{rcl}
  \dot{X}(t)&=&(A_0(t)+B_0(t)K_0(t)) X(t)+X(t)(A_0(t)+B_0(t)K_0(t))^\T\\
  && +\sum_{i=1}^N(A_i(t)+B_i(t)K_i(t)) X(t)(A_i(t)+B_i(t)K_i(t))^\T+\mu X(t),t\ne t_k,t\ge t^0\\
  X(t^+)&=&\sum_{i=0}^N(J_i(k)+E_i(k)G_i(k)) X(t)(J_i(k)+E_i(k)G_i(k))^\T,\ t=t_k, t\ge t^0\\
  X(t^0)&=&X_0\in\psd^n
\end{array}
\end{equation}
where the matrix-valued functions $A_i:\mathbb{R}_{\ge0}\mapsto\mathbb{R}^{n\times n}$, $B_i:\mathbb{R}_{\ge0}\mapsto\mathbb{R}^{n\times m_i}$, $K_i:\mathbb{R}_{\ge0}\mapsto\mathbb{R}^{m_i\times n}$, $i=0,\ldots,N$, are assumed to be piecewise continuous and bounded, and $J_i:\mathbb{Z}_{\ge0}\mapsto\mathbb{R}^{n\times n}$, $E_i:\mathbb{Z}_{\ge0}\mapsto\mathbb{R}^{n\times m_i}$, $G_i:\mathbb{R}_{\ge0}\mapsto\mathbb{R}^{m_i\times n}$, $i=0,\ldots,N$, are bounded. We then have the following result:
\begin{theorem}\label{th:generalImpulsiveSystems:jdksjdlasdjakljdlsa:stabz}\label{th:Main:Imp:stabz}
    The following statements are equivalent:
  \begin{enumerate}[(a)]
    %
    \item The matrix-valued linear dynamical system \eqref{eq:matdiffeqHybridLTV:stabz} is globally uniformly exponentially stabilizable with hybrid rate $(\alpha,\rho)$ for some $\alpha>0$ and $\rho\in(0,1)$.
    %
    \item  There exist a piecewise differentiable matrix-valued function $Q:\mathbb{R}_{\ge0}\mapsto\mathbb{S}^n_{\succ0}$, $\alpha_1I\preceq P(t)\preceq \alpha_2I$, such that
    \begin{equation}\label{eq:SecondOrderMomentImpulsiveLyapunov1:stabz}
      \begin{array}{rcl}
     \mathscr{N}(t)^{\T}\begin{bmatrix}
     -\dot{Q}(t)+\He[A_0(t)Q(t)]+\mu(t) Q(t) & \row_{i=1}^N[Q(t)A_i(t)^{\T}]\\
         \star & -I_N\otimes Q(t)
      \end{bmatrix}\mathscr{N}(t)&\preceq&-\alpha_3I,\ t\ne t_k,\ t\ge t^0,\ k\ge1\\
        \begin{bmatrix}
          I & 0\\
          0 & \mathscr{M}(k)^{\T}
        \end{bmatrix}\begin{bmatrix}
          -Q(t_k) & \row_{i=0}^N[Q(t_k)J_i(k)^{\T}]\\
          \star & -I_N\otimes Q(t_k^+)
        \end{bmatrix}  \begin{bmatrix}
          I & 0\\
          0 & \mathscr{M}(k)
        \end{bmatrix}&\preceq&-\alpha_4I,\ t_k>t^0,\ k\ge1
      \end{array}
    \end{equation}
    hold for some scalars $\alpha_1,\alpha_2,\alpha_3,\alpha_4>0$ where $\mathscr{N}(t)^{\T}$ is a basis of the left null-space of $\textstyle\diag_{i=0}^N(B_i(t))$ and $\mathscr{M}(k)^{\T}$ is a basis of the left null-space of $\textstyle\diag_{i=0}^N(E_i(k))$.
     \item There exist a piecewise differentiable matrix-valued function $Q:\mathbb{R}_{\ge0}\mapsto\mathbb{S}^n_{\succ0}$, $\alpha_1I\preceq P(t)\preceq \alpha_2I$, matrix valued functions $U_i:\mathbb{R}_{\ge0}\mapsto\mathbb{R}^{m_i\times n}$ and $V_i:\mathbb{Z}_{\ge0}\mapsto\mathbb{R}^{m_i\times n}$. $i=0,\ldots,N$,  such that
    \begin{equation}\label{eq:SecondOrderMomentImpulsiveLyapunov2:stabz}
      \begin{array}{rcl}
      \begin{bmatrix}
     -\dot{Q}(t)+\He[\Psi_0(t)]+\mu(t) Q(t) & \row_{i=1}^N[\Psi_i(t)^{\T}]\\
         \star & -I_N\otimes Q(t)
      \end{bmatrix}&\preceq&-\alpha_3I,\ t\ne t_k,\ t\ge t^0,\ k\ge1\\
        \begin{bmatrix}
          -Q(t_k) & \row_{i=0}^N[(\Phi_i(k))^{\T}]\\
          \star & -I_N\otimes Q(t_k^+)
        \end{bmatrix}&\preceq&-\alpha_4I,\ t_k>t^0,\ k\ge1
      \end{array}
    \end{equation}
    hold for some scalars $\alpha_1,\alpha_2,\alpha_3,\alpha_4>0$ where $\Psi_i(t):=A_i(t)Q(t)+ B_i(t)U_i(t)$ and $\Phi_i(k):=J_i(k)Q(t_k)+E_i(k)V_i(k)$.
  \end{enumerate}
\end{theorem}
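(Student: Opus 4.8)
The plan is to exploit the fact that the flow conditions and the jump conditions in statements (b) and (c) are completely decoupled, so that the proof reduces to gluing together the continuous-time stabilization argument of Theorem~\ref{th:CT:stabz} (applied along the flow) and the discrete-time stabilization argument of Theorem~\ref{th:DT:stabz} (applied at the jumps), the two being tied together by the hybrid stability characterization of Theorem~\ref{th:Main:Imp}. Accordingly, I would first prove the equivalence of (a) and (c), and then the equivalence of (c) and (b).

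For (a) $\Leftrightarrow$ (c), observe that the closed-loop system \eqref{eq:matdiffeqHybridLTV:stabz} is an impulsive system of the form \eqref{eq:matdiffeqHybridLTV} whose flow generator is the perturbed operator obtained by substituting $A_i\leftarrow A_i+B_iK_i$ as in \eqref{eq:operatorLtB1}, and whose jump generator is obtained by substituting $J_i\leftarrow J_i+E_iG_i$ as in \eqref{eq:operatorLdtu1}. By Theorem~\ref{th:Main:Imp}, the existence of gains rendering the closed loop globally uniformly exponentially stable with some hybrid rate $(\alpha,\rho)$ is equivalent to the existence of a bounded, bounded-away-from-zero $P(t)$ satisfying the closed-loop adjoint Lyapunov inequalities $\dot{P}(t)+\He[(A_0+B_0K_0)^{\T}P]+\sum_{i\ge1}(A_i+B_iK_i)^{\T}P(A_i+B_iK_i)+\mu P\preceq-\alpha_3 I$ along the flow and $\sum_{i=0}^N(J_i+E_iG_i)^{\T}P(t_k^+)(J_i+E_iG_i)-P(t_k)\preceq-\alpha_4 I$ at the jumps. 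I would then treat each inequality separately. On the flow side, performing the congruence transformation by $Q(t):=P(t)^{-1}$ (using $Q\dot{P}Q=-\dot{Q}$ and $QPQ=Q$), setting $U_i:=K_iQ$ so that $(A_i+B_iK_i)Q=A_iQ+B_iU_i=\Psi_i$, and eliminating the quadratic terms $\sum_{i\ge1}\Psi_i^{\T}Q^{-1}\Psi_i$ by a Schur complement reproduces the first inequality of \eqref{eq:SecondOrderMomentImpulsiveLyapunov2:stabz} exactly as in the proof of Theorem~\ref{th:CT:stabz}. On the jump side, the congruence by $Q(t_k):=P(t_k)^{-1}$ together with $V_i:=G_iQ(t_k)$, giving $(J_i+E_iG_i)Q(t_k)=J_iQ(t_k)+E_iV_i=\Phi_i$, and a Schur complement against $-I_N\otimes Q(t_k^+)$ reproduces the second inequality of \eqref{eq:SecondOrderMomentImpulsiveLyapunov2:stabz}, exactly as in the proof of Theorem~\ref{th:DT:stabz}. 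Reading the changes of variables backwards recovers the stated gains $K_i=U_iQ^{-1}$ and $G_i=V_iQ(t_k)^{-1}$.

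For (c) $\Leftrightarrow$ (b), each of the two inequalities in \eqref{eq:SecondOrderMomentImpulsiveLyapunov2:stabz} is affine in the slack variables, so I would write them as a base matrix plus a term $\He(\mathcal{B}\,\mathcal{U}\,\mathcal{I})$, where $\mathcal{U}$ stacks the $U_i$ (resp. the $V_i$) and $\mathcal{B}$ is built from $\diag_{i=0}^N B_i(t)$ (resp. $\diag_{i=0}^N E_i(k)$), and then apply the Projection Lemma \cite{Gahinet:94a} to eliminate $\mathcal{U}$. The nontrivial projector annihilates $\mathcal{B}$ on the left and yields precisely the pre/post multiplication by $\mathscr{N}(t)^{\T},\mathscr{N}(t)$ on the flow side and by $\diag(I,\mathscr{M}(k)^{\T}),\diag(I,\mathscr{M}(k))$ on the jump side that appear in \eqref{eq:SecondOrderMomentImpulsiveLyapunov1:stabz}, while the complementary projector trivially extracts the negative-definite lower-right block and is automatically satisfied.

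The only genuinely delicate bookkeeping, which I would flag as the main technical point rather than a conceptual obstacle, is the handling of the strictness constants under congruence: a right-hand side $-\alpha_3 I$ becomes $-\alpha_3 Q^2$ (resp. $-\alpha_4 Q(t_k)^2$) after congruence, and one uses the uniform bounds $\alpha_1 I\preceq Q\preceq\alpha_2 I$ to re-absorb this into a fresh positive constant in both directions. The same bounds guarantee that boundedness and bounded-away-from-zero transfer between $P$ and $Q=P^{-1}$, and that piecewise differentiability of $P$ is inherited by $Q$ through $\dot{Q}=-Q\dot{P}Q$. Everything else is the routine algebra already performed in Theorems~\ref{th:CT:stabz} and~\ref{th:DT:stabz}.
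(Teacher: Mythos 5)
Your proposal is correct and follows exactly the route the paper intends: the paper omits its own proof, stating only that it is ``a mixture of the proofs of Theorem~\ref{th:CT:stabz} and Theorem~\ref{th:DT:stabz},'' and your argument fills in precisely that mixture --- Theorem~\ref{th:Main:Imp} applied to the closed loop, congruence by $Q=P^{-1}$ with the changes of variables $U_i=K_iQ$, $V_i=G_iQ(t_k)$ and Schur complements for (a)$\Leftrightarrow$(c), then the Projection Lemma with the annihilators $\mathscr{N}(t)$ and $\diag(I,\mathscr{M}(k))$ for (c)$\Leftrightarrow$(b). Your handling of the strictness constants and of the transfer of the uniform bounds between $P$ and $Q$ is also the right bookkeeping, so nothing is missing.
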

\begin{proof}
The proof consists of a mixture of the proofs of Theorem \ref{th:CT:stabz} and Theorem \ref{th:DT:stabz}. It is thus omitted for brevity.
\end{proof}}

\subsection{Applications}

We illustrate in this section that several results from the literature can be recovered and/or extended using the proposed approach. In particular, we cover the cases of stochastic impulsive systems with deterministic impulse times \citep{Michel:08,Goebel:12,Briat:15i}, stochastic switched system with deterministic switching times \citep{Michel:08,Goebel:12,Briat:14f,Shaked:14,Briat:15i}, stochastic sampled-data systems \citep{Michel:08,Goebel:12,Briat:15i}, and impulsive Markov jump linear systems \citep{Souza:21}. This covers a broad range of systems of the literature.

\subsubsection{A class of stochastic LTV systems with deterministic impulses}

\black{We consider here the following class of stochastic LTV systems with deterministic jumps as
\begin{equation}\label{eq:syst:Impulsive_general_LTV}
  \begin{array}{rcl}
    \d x(t)&=&A_0(t)x(t)\dt+\sum_{i=1}^{N}A_i(t)x(t)\d W_i(t),\ t\ne t_k,\ k\ge1\\
    x(t_k^+)&=&J_0(k)x(t_k)+\sum_{i=1}^{N}J_i(k)x(t_k)\nu_i(k),\ k\ge1,\ t_k\ge t^0\\
    x(t^0)&=&x_0
  \end{array}
\end{equation}
where $x,x_0\in\mathbb{R}^n$ is the state of the system and the initial condition, $t^0$ is the initial time. The vector $W(t):=(W_1(t),\ldots,W_{N}(t))$ is a vector of $N$ independent Wiener processes which are also independent of the state $x(t)$ whereas the vector $\nu(k):=(\nu_1(k),\ldots,\nu_{N}(k))$ contains
$N$ independent random processes which are independent of the state $x(t_k)$ and which have zero mean and unit variance. The matrix-valued functions of the system are assumed to satisfy the same assumptions as for the system \eqref{eq:matdiffeqHybridLTV}. The sequence of impulse time ${t_k}_{\ge1}$ is assumed to be increasing and to grow unboundedly. To this system, we can associate a filtered probability space $(\Omega,\mathcal{F},\mathcal{F}_{t,k},\P)$ with hybrid filtration \citep{Teel:14} which satisfies the usual conditions.

The following notion of stability for the system \eqref{eq:syst:Impulsive_general_LTV} is considered here:
\begin{definition}\label{def:expstab}
  The system \eqref{eq:syst:Impulsive_general_LTV} is said to be uniformly exponentially mean-square stable with hybrid rate $(\alpha,\rho)$ if there exist an $M>0$, a $\rho\in(0,1)$ and an $\alpha>0$ such that we have
  \begin{equation}
    \E[||x(t)||_2^2]\le M\rho^{\kappa(t,t_0)}e^{-\alpha(t-t^0)}\E[||x_0||_2^2]
  \end{equation}
  for all $t\ge t^0$ and all $t^0\ge0$ where $\kappa(t,s)$ denotes the number of jumps in the interval $[s, t]$.
\end{definition}

We then have the following result which is an extension of the results in \citep{Briat:15i}:
\begin{theorem}\label{th:generalImpulsiveSystems}
  The following statements are equivalent:
  \begin{enumerate}[(a)]
    \item The system \eqref{eq:syst:Impulsive_general_LTV} is uniformly exponentially mean-square stable with hybrid rate $(\alpha,\rho)$ for some $\alpha>0$ and $\rho\in(0,1)$.
    %
    \item There exists a piecewise differentiable matrix-valued function $P:\mathbb{R}_{\ge0}\mapsto\mathbb{S}^n_{\succ0}$, $ \alpha_1I\preceq P(t)\preceq \alpha_2I$, such that
    \begin{equation}
      \begin{array}{rclcl}
        \dot{P}(t)+P(t)A_0(t)+A_0(t)^{\T}P(t)+\sum_{i=1}^NA_i(t)^{\T}P(t)A_i(t)&\preceq&-\alpha_3I,&& t\ne t_k,\ t\ge t^0,\ k\ge1\\
        \sum_{i=1}^NJ_i(k)^{\T}P(t_k^+)J_i(k)-P(t_k)&=&-\alpha_3I,&& t_k>t^0,\ k\ge1
      \end{array}
    \end{equation}
    hold for some scalars $\alpha_1,\alpha_2,\alpha_3>0$.
  \end{enumerate}
\end{theorem}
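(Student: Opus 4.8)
The plan is to reduce the stochastic question to the deterministic matrix-valued impulsive framework of Theorem \ref{th:Main:Imp} by passing to the second-order moment matrix $X(t):=\E[x(t)x(t)^{\T}]$. The key observation is that $\E[||x(t)||_2^2]=\trace\E[x(t)x(t)^{\T}]=\trace X(t)=||X(t)||_*$ since $X(t)\succeq0$, so that uniform exponential mean-square stability of \eqref{eq:syst:Impulsive_general_LTV} with hybrid rate $(\alpha,\rho)$ in the sense of Definition \ref{def:expstab} is exactly uniform exponential stability with hybrid rate $(\alpha,\rho)$ of the matrix-valued process $X(t)$ in the sense of Definition \ref{def:HybridsystemMat}, the equivalence of matrix norms making the choice of norm irrelevant.

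First I would derive the dynamics of $X(t)$. Along the flow, a standard second-order moment computation (applying the It\^o rule to $x(t)x(t)^{\T}$ and using that the Wiener processes are independent with $\d W_i\,\d W_j=\delta_{ij}\,\dt$) makes the martingale contributions vanish in expectation and leaves
\begin{equation}
  \dot{X}(t)=A_0(t)X(t)+X(t)A_0(t)^{\T}+\sum_{i=1}^NA_i(t)X(t)A_i(t)^{\T}.
\end{equation}
At a jump I would expand $x(t_k^+)x(t_k^+)^{\T}$ from the update law and take expectations; since the $\nu_i(k)$ are mutually independent, independent of $x(t_k)$, and satisfy $\E[\nu_i(k)]=0$ and $\E[\nu_i(k)\nu_j(k)]=\delta_{ij}$, every cross term drops and one obtains
\begin{equation}
  X(t_k^+)=\sum_{i=0}^NJ_i(k)X(t_k)J_i(k)^{\T}.
\end{equation}
These are precisely the flow and jump maps of the matrix-valued impulsive system \eqref{eq:matdiffeqHybridLTV} with $\mu\equiv0$, whose generators have adjoints $\mathcal{C}_t^*(P)=A_0(t)^{\T}P+PA_0(t)+\sum_{i=1}^NA_i(t)^{\T}PA_i(t)$ and $\mathcal{D}_k^*(P)=\sum_{i=0}^NJ_i(k)^{\T}PJ_i(k)$.

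Finally, I would invoke Theorem \ref{th:Main:Imp} applied to this second-order moment system: statement (a) there (exponential stability of $X$) is equivalent to its inequality form, statement (c), which furnishes a piecewise differentiable $P$ with $\alpha_1I\preceq P(t)\preceq\alpha_2I$ satisfying $\dot{P}(t)+\mathcal{C}_t^*(P(t))\preceq-\alpha_3I$ off the jumps and $\mathcal{D}_k^*(P(t_k^+))-P(t_k)\preceq-\alpha_4I$ at the jumps. Substituting the explicit expressions for $\mathcal{C}_t^*$ and $\mathcal{D}_k^*$ obtained above recovers exactly the inequalities of statement (b), and conversely any such $P$ certifies stability of $X$ and hence of the original system. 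I do not expect a genuine obstacle: the only delicate point is the bookkeeping in the moment computation, and in particular verifying that the zero-mean, unit-variance, mutual-independence hypotheses on the $\nu_i(k)$ are exactly what force the jump map to be the cone-preserving map $\sum_{i=0}^NJ_i(k)(\cdot)J_i(k)^{\T}$ rather than an expression contaminated by the means of the $\nu_i(k)$.
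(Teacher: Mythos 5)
Your proposal is correct and follows essentially the same route as the paper: reduce to the second-order moment matrix $X(t)=\E[x(t)x(t)^{\T}]$, identify its flow and jump dynamics with the matrix-valued impulsive system \eqref{eq:matdiffeqHybridLTV} (with $\mu\equiv0$), and apply Theorem \ref{th:Main:Imp}. In fact you spell out the It\^o and jump-expectation computations, as well as the norm identity $\E[||x(t)||_2^2]=||X(t)||_*$, that the paper's one-line proof leaves implicit, and your derivation correctly yields the jump sum over $i=0,\ldots,N$ with the inequality form of the jump condition, matching Theorem \ref{th:Main:Imp}(c).
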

\begin{proof}
  Observing that the \blue{seconder-order moment} $X(t)=\E[x(t)x(t)^{\T}]$ of the state of the system \eqref{eq:syst:Impulsive_general_LTV} is governed by the system
    \begin{equation}
      \begin{array}{rcl}
        \dot{X}(t)&=&A_0(t)X(t)+X(t)A_0(t)^{\T}+\sum_{i=1}^{N}A_{i}(t)X(t)A_{i}^{\T},\ t\ne t_k,\ k\ge1,\ t\ge t^0,\\
        X(t_k^+)&=&\sum_{i=0}^{N}J_i(k)X(t_k)J_i(k)^{\T},\ k\ge1, t_k\ge t^0,\\
        X(t_0^+)&=&X(t_0)=X(0),
      \end{array}
    \end{equation}
    then an immediate application of Theorem \ref{th:generalImpulsiveSystems:jdksjdlasdjakljdlsa} yields the result.
\end{proof}}

\blue{Using the above very general result, we can retrieve the results obtained in \cite{Briat:15i}:
\begin{theorem}[Constant dwell-time]
    The following statements are equivalent:
  \begin{enumerate}[(a)]
    \item The LTI version of the system \eqref{eq:syst:Impulsive_general_LTV} is uniformly exponentially mean-square stable with hybrid rate $(\alpha,\rho)$ for some $\alpha>0$ and $\rho\in(0,1)$ under constant dwell-time $\bar T$.
    \item There exists a piecewise differentiable matrix-valued function $P:[0,\bar T]\mapsto\mathbb{S}^n_{\succ0}$ such that
    \begin{equation}
      \begin{array}{rcl}
        \dot{P}(\tau)+P(\tau)A_0+A_0^{\T}P(\tau)+\sum_{i=1}^NA_i^{\T}P(\tau)A_i&\preceq&-\alpha I,\  \tau\in[0,\bar T]\\
        \sum_{i=1}^NJ_i^{\T}P(0)J_i-P(\bar T)&\preceq&-\alpha I
      \end{array}
    \end{equation}
    hold for some scalars $\alpha>0$.
    \end{enumerate}
\end{theorem}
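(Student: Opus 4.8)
The plan is to derive this corollary directly from Theorem \ref{th:generalImpulsiveSystems} (equivalently, the inequality characterization of Theorem \ref{th:Main:Imp}) by exploiting the fact that, in the LTI case with constant inter-jump spacing $\bar{T}$, the impulsive second-order moment dynamics is a \emph{periodic} hybrid system. The bridge between the two statements is the timer variable $\tau:=t-t_k\in(0,\bar{T}]$ measuring the elapsed time since the last jump: a one-period Lyapunov matrix $P:[0,\bar{T}]\mapsto\pd^n$ and a global one $\tilde{P}:\R_{\ge0}\mapsto\pd^n$ are related through $\tilde{P}(t)=P(t-t_k)$ on each flow interval $(t_k,t_{k+1}]$, so that $\tilde{P}(t_k^+)=P(0)$ and $\tilde{P}(t_{k+1})=P(\bar{T})$.

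First I would prove that statement (b) implies statement (a). Given $P$ on $[0,\bar{T}]$ satisfying the two inequalities, I would \emph{tile} it into a global Lyapunov matrix by setting $\tilde{P}(t):=P(t-t_k)$ for $t\in(t_k,t_{k+1}]$ and $\tilde{P}(t^0):=P(0)$. Since $P$ is continuous and positive definite on the compact set $[0,\bar{T}]$, the uniform bounds $\alpha_1 I\preceq\tilde{P}(t)\preceq\alpha_2 I$ hold with $\alpha_1:=\min_{\tau}\lambda_{\min}(P(\tau))>0$ and $\alpha_2:=\max_{\tau}\lambda_{\max}(P(\tau))$; the flow inequality for $\tilde{P}$ holds at every $t\ne t_k$ because it coincides with the flow inequality for $P(\tau)$; and the jump inequality holds because, at $t=t_{k+1}$, the substitutions $\tilde{P}(t_{k+1}^+)=P(0)$ and $\tilde{P}(t_{k+1})=P(\bar{T})$ reduce it precisely to $\sum_i J_i^{\T}P(0)J_i-P(\bar{T})\preceq-\alpha I$. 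Thus $\tilde{P}$ satisfies the hypotheses of Theorem \ref{th:generalImpulsiveSystems}, which yields (a).

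For the converse, that (a) implies (b), I would invoke Theorem \ref{th:generalImpulsiveSystems} to obtain a global $\tilde{P}$ and then upgrade it to a \emph{periodic} one. The cleanest route is to reuse the explicit construction from the proof of Theorem \ref{th:Main:Imp}, namely $\langle\tilde{P}(t),X\rangle=\int_t^\infty\langle Q,S(s,t)X\rangle\ds+\sum_{t\le t_i}\langle R,S(t_i,t)X\rangle$, with the free data chosen \emph{time-invariant}, $Q(s)\equiv Q\in\pd^n$ and $R(i)\equiv R\in\pd^n$. Because the system is LTI and the jump grid is arithmetic with common difference $\bar{T}$, the state-transition operator enjoys the shift-invariance $S(s+\bar{T},t+\bar{T})=S(s,t)$; substituting $s\mapsto s+\bar{T}$ in the integral and reindexing the sum then gives $\tilde{P}(t+\bar{T})=\tilde{P}(t)$ for all $t$, i.e.\ $\tilde{P}$ is $\bar{T}$-periodic in the timer. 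Setting $P(\tau):=\tilde{P}(t_k^++\tau)$ for $\tau\in[0,\bar{T}]$ (well-defined independently of $k$ by periodicity) then inherits the bounds and the two inequalities, which are exactly the conditions of statement (b) once the single rate $\alpha:=\min\{\alpha_3,\alpha_4\}$ is used.

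The main obstacle is the converse direction, specifically establishing that a \emph{periodic} certificate can always be extracted. A naive restriction of an arbitrary global $\tilde{P}$ to a single interval would fail, because the jump inequality in (b) couples the post-jump value $P(0)$ and the pre-jump value $P(\bar{T})$ across the periodic boundary, and an arbitrary $\tilde{P}$ need not match there. Periodicity is what reconciles these two endpoints, and it hinges entirely on the shift-invariance $S(s+\bar{T},t+\bar{T})=S(s,t)$ together with the time-invariant choice of $Q$ and $R$; verifying this shift-invariance for the impulsive moment flow (constant generator $\mathcal{C}$, constant jump operator $\mathcal{D}$, equally spaced jumps) is the one place where the constant-dwell-time hypothesis is genuinely used.
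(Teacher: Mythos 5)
Your proposal is correct and follows essentially the same route as the paper, whose one-line proof simply asserts that under periodic impulses it is necessary and sufficient to consider a $\bar{T}$-periodic $P$ with $P(t_k+\tau)=P(\tau)$ and $P(t_k^+)=P(0)$, then appeals to Theorem \ref{th:generalImpulsiveSystems}. Your tiling argument for sufficiency and your shift-invariance argument (constant $Q$, $R$ in the explicit construction from the proof of Theorem \ref{th:Main:Imp}) for necessity are precisely the details the paper leaves implicit, and both check out.
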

\begin{proof}
  Since the impulses arrive periodically, it is necessary and sufficient to consider a $\bar T$-periodic matrix-valued function $P(t_k+\tau)=P(\tau)$ for all $k\ge0$. Note that we have $P(t_k^+)=P(0)$. The result then follows.
\end{proof}

We can also retrieve this result which can be found in \cite{Briat:15i}:
\begin{theorem}[Minimum dwell-time]
    The following statements are equivalent:
  \begin{enumerate}[(a)]
    \item The LTI version of the system \eqref{eq:syst:Impulsive_general_LTV} is uniformly exponentially mean-square stable with hybrid rate $(\alpha,\rho)$ for some $\alpha>0$ and $\rho\in(0,1)$ under minimum dwell-time $\bar T$.
    \item There exists a piecewise differentiable matrix-valued function $P:[0,\bar T]\mapsto\mathbb{S}^n_{\succ0}$ such that
    \begin{equation}
      \begin{array}{rcl}
        \dot{P}(\tau)+P(\tau)A_0+A_0^{\T}P(\tau)\sum_{i=1}^NA_i^{\T}P(\tau)A_i&\preceq&-\alpha I,\  \tau\in[0,\bar T]\\
        P(\bar T)A_0+A_0^{\T} P(\bar T)+\sum_{i=1}^NA_i^{\T}P(\bar T)A_i&\preceq&-\alpha I,\  \tau\in[0,\bar T]\\
        \sum_{i=1}^NJ_i^{\T}P(0)J_i-P(\bar T)&\preceq&-\alpha I
      \end{array}
    \end{equation}
    hold for some scalars $\alpha>0$.
    \end{enumerate}
\end{theorem}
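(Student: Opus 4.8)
The plan is to derive both implications from Theorem~\ref{th:generalImpulsiveSystems} by promoting the single map $P:[0,\bar T]\mapsto\mathbb{S}^n_{\succ0}$ into a \emph{timer-dependent} Lyapunov matrix for the impulsive second-order moment system. For any admissible impulse sequence $\{t_k\}$ obeying the minimum dwell-time constraint $t_{k+1}-t_k\ge\bar T$, introduce the timer $\tau(t):=t-t_k$ on $t\in(t_k,t_{k+1}]$ and set
\[
  \tilde P(t):=P\big(\min\{\tau(t),\bar T\}\big),
\]
so that $\tilde P(t)=P(t-t_k)$ as long as $t-t_k\le\bar T$ and $\tilde P(t)\equiv P(\bar T)$ afterwards. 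This $\tilde P$ is piecewise differentiable, and since $P$ is continuous and positive definite on the compact interval $[0,\bar T]$ it is uniformly bounded above and below away from zero, which supplies the scalars $\alpha_1,\alpha_2$ required by Theorem~\ref{th:generalImpulsiveSystems}.

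For sufficiency, statement (b) $\Rightarrow$ statement (a), I would check that $\tilde P$ satisfies the flow and jump inequalities of Theorem~\ref{th:generalImpulsiveSystems}. Along the flow there are two regimes: while $\tau(t)<\bar T$ we have $\dot{\tilde P}(t)=\dot P(\tau)$, so the flow inequality reads $\dot P(\tau)+\mathcal{C}^*(P(\tau))\preceq-\alpha I$, which is the first condition; once $\tau(t)\ge\bar T$ we have $\tilde P(t)\equiv P(\bar T)$ and $\dot{\tilde P}(t)=0$, so the flow inequality collapses to $\mathcal{C}^*(P(\bar T))\preceq-\alpha I$, which is the second condition. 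The decisive observation is at the jumps: minimum dwell-time forces $t_k-t_{k-1}\ge\bar T$, hence the timer just before $t_k$ has reached $\bar T$ and $\tilde P(t_k)=P(\bar T)$, while the reset of the timer gives $\tilde P(t_k^+)=P(0)$; the jump inequality $\mathcal{D}^*(\tilde P(t_k^+))-\tilde P(t_k)\preceq-\alpha I$ then reduces exactly to $\mathcal{D}^*(P(0))-P(\bar T)\preceq-\alpha I$, i.e. the third condition. As these hold for every admissible sequence, Theorem~\ref{th:generalImpulsiveSystems} delivers uniform mean-square exponential stability with some hybrid rate.

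For necessity, statement (a) $\Rightarrow$ statement (b), I would invoke the converse part of Theorem~\ref{th:generalImpulsiveSystems} to obtain a time-varying Lyapunov matrix and then use the time-invariance of the LTI data together with the dwell-time structure to recast it in the clock-based form above. Concretely, feeding the \emph{cost-to-go} construction from the proof of Theorem~\ref{th:Main:Imp} with time-invariant weights, the resulting matrix depends on absolute time only through the timer $\tau=t-t_k$; its restriction to $\tau\in[0,\bar T]$ defines $P$, and the limiting value it attains for $\tau\ge\bar T$ defines $P(\bar T)$. The steady-flow inequality $\mathcal{C}^*(P(\bar T))\preceq-\alpha I$ is then forced by considering sequences whose inter-jump interval is allowed to grow without bound, whereas the flow inequality on $[0,\bar T]$ and the jump inequality are extracted from sequences spaced exactly by $\bar T$.

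The hard part is precisely this converse. One must exhibit a single timer-parametrised $P$ that works uniformly over the entire admissible family $\{t_{k+1}-t_k\ge\bar T\}$, and in particular show that the held value $P(\bar T)$ simultaneously satisfies the steady-flow condition and enters the jump condition. Reconciling the two extremal regimes --- arbitrarily long flow intervals, which pin down $\mathcal{C}^*(P(\bar T))\preceq-\alpha I$, against the tight spacing $t_{k+1}-t_k=\bar T$, which pins down the transient flow and jump inequalities --- within one continuous function $P$ is where the genuine work lies; making this uniform reconstruction rigorous, rather than merely heuristic, is the main obstacle.
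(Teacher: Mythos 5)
Your sufficiency direction is essentially the paper's own argument: the paper defines $P(t_k+\tau)=P(\tau)$ for $\tau\in[0,\bar T]$ and $P(t_k+\tau)=P(\bar T)$ for $\tau\in[\bar T,T_k]$ and feeds this clock-based function into Theorem~\ref{th:generalImpulsiveSystems}, which is exactly your $\tilde P(t)=P(\min\{t-t_k,\bar T\})$; your two-regime check of the flow inequality and the observation that minimum dwell-time forces the timer to have reached $\bar T$ at every jump (so the jump condition reduces to $\mathcal{D}^*(P(0))-P(\bar T)\preceq-\alpha I$) is the same computation. One technical point the paper adds and you omit: the concatenation is generally not differentiable at $\tau=\bar T$, and the paper invokes \cite{Holicki:19} to argue one may take $\dot P(\bar T)=0$ without loss of generality, so the held extension is genuinely admissible; your appeal to ``piecewise differentiable'' covers this only under a generous reading of the flow inequality at the kink.

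On necessity, you have correctly located the difficulty, but your sketch contains a step that fails as stated: the cost-to-go matrix $\bar P(t)$ constructed in the proof of Theorem~\ref{th:Main:Imp} is defined by an integral and sum over the \emph{entire future} trajectory, and therefore depends on the whole future impulse pattern $\{t_i:t_i\ge t\}$, not only on the elapsed timer $\tau=t-t_k$, even when the system data are time-invariant. Under a minimum dwell-time constraint the admissible futures compatible with a given timer value are genuinely different (the next jump may arrive at exactly $\bar T$, much later, or never), so time-invariance alone does not collapse $\bar P$ to a function of $\tau$; extracting the three inequalities from two extremal sequences, as you propose, would additionally require a gluing or worst-case (value-function) argument over the admissible family, which you do not supply. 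In fairness, the paper does not supply it either: its proof consists of the sufficiency construction plus the \cite{Holicki:19} remark and simply asserts that the result follows, leaving the converse implicit. So your proposal, as written, establishes (b)$\Rightarrow$(a) in the same way as the paper and only gestures at (a)$\Rightarrow$(b); your flagging of the uniform reconstruction as the main obstacle is accurate, but the specific claim that the converse Lyapunov matrix depends on absolute time only through the timer is false for general minimum dwell-time sequences and cannot be the basis of a rigorous proof.
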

\begin{proof}
  In this case, we just need to consider
  \begin{equation}
    P(t_k+\tau)=\left\{\begin{array}{rcl}
      P(\tau)&& \textnormal{if }\tau\in[0,\bar T]\\
      P(\bar T)&& \textnormal{if }\tau\in[\bar T,T_k]
    \end{array}\right.
  \end{equation}
  and the result follows. Note that this matrix may fail to be differentiable at $\tau=\bar T$. However, this is not a problem as it has been shown in \cite{Holicki:19} that if there exists a matrix-valued function $P$ which is differentiable on $(0,\bar T)$, then there exists a $P$ such that $\dot{P}(\bar T)=0$ that sastifies the conditions of the result. This proves the result.
\end{proof}}

\subsubsection{A class of stochastic LTV switched systems with deterministic switchings}

\black{We illustrate in this section that the results obtained in this paper allow to retrieve and extend those developed in \citep{Shaked:14,Briat:14f}. To this aim, let us consider here the following class of LTV stochastic switched systems
\begin{equation}\label{eq:switched:det}
  \begin{array}{rcl}
    \d x(t)&=&A_{\sigma(t),0}(t)x(t)\dt+\sum_{i=1}^{N}A_{\sigma(t),i}(t)x(t)\d W_i(t)\\
    x(t_0)&=&x_0\\
    \sigma(t_0)&=&\sigma_0\\
  \end{array}
\end{equation}
where $x,x_0\in\mathbb{R}^n$ are the state of the system and the initial condition, and $t_0=0$ is the initial time. The Wiener processes $W_1(t),\ldots,W_{N}(t)$ are assumed to be independent of each other and of the state $x(t)$ and the switching signal $\sigma(t)$ of the system. The switching signal $\sigma:\mathbb{R}_{\ge0}\mapsto\{1,\ldots,M\}$, with initial value $\sigma_0$, is assumed to be deterministic and piecewise constant. The switching signal is assumed to change values at time $t_k\ge0$, $k\ge1$, and this sequence is assumed to be increasing and to grow unboundedly. The dwell-time $T_k$ is defined as $T_k:=t_{k+1}-t_k$, $k\ge0$. The notion of mean-square stability in Definition \ref{def:expstab} can be straightforwardly adapted to the system above. This leads us to the following result:
\begin{theorem}[Fixed dwell-time]\label{cor:switched:fixedDT}
  Assume that there exist differentiable matrix-valued functions $P_i:\mathbb{R}_{\ge0}\mapsto\pd^n$, $\alpha_1I\preceq P_i(\cdot)\preceq\alpha_2I$, $i=1,\ldots,M$, such that
    \begin{equation}
      \begin{array}{rcl}
         \dot{P}_i(t)+\He[P_i (\tau)A_{0,i}(t)]+\sum_{j=1}^NA_{i,j}(t)^{\T}P_i (t)A_{i,j}(t)&\preceq&-\alpha_3I,\ i =1,\ldots,M,\ t\ne t_k,\ t\ge t^0,\ k\ge1\\
         P_{i}(t_k^+)-P_{j}(t_k)&\preceq&0,\ i,j=1,\ldots,M,\ i\ne j,\ k\ge1
      \end{array}
    \end{equation}
  hold for some $\alpha_1,\alpha_2,\alpha_3>0$.

  Then, the system \eqref{eq:switched:det} is mean-square exponentially stable under the fixed dwell-time sequence $\{T_k\}_{k\ge0}$.
\end{theorem}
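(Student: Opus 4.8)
The plan is to reduce the stochastic switched system \eqref{eq:switched:det} to a deterministic matrix-valued impulsive system governing its second-order moment, and then to recognise the multiple-Lyapunov-function hypothesis as an instance of the persistent-flowing relaxation already established in Theorem \ref{th:Main:Imp:persflow}. First I would introduce the second-order moment matrix $X(t):=\E[x(t)x(t)^{\T}]$. As in the preceding examples, between two consecutive switching instants the mode $\sigma(t)\equiv i$ is constant, so the second-order moment obeys the linear matrix-valued flow $\dot X(t)=A_{i,0}(t)X(t)+X(t)A_{i,0}(t)^{\T}+\sum_{j=1}^{N}A_{i,j}(t)X(t)A_{i,j}(t)^{\T}=\mathcal{C}_t(X(t))$, where $\mathcal{C}_t$ is the mode-$i$ generator of the form \eqref{eq:operatorLt} with $\mu\equiv0$. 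The crucial structural observation is that, unlike a genuine impulsive system, a switched system does not reset its state at a switch: $x(t_k^+)=x(t_k)$, hence $X(t_k^+)=X(t_k)$, so the jump map induced on the second-order moment is the identity operator $\mathcal{D}_k=\mathrm{id}$. Since $\E[\|x(t)\|_2^2]=\trace X(t)=\|X(t)\|_*$, mean-square exponential stability of \eqref{eq:switched:det} is equivalent to uniform exponential stability of this matrix-valued system.

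Next I would assemble the mode-dependent certificates $P_i$ into a single piecewise-defined Lyapunov matrix $P(t):=P_{\sigma(t)}(t)$, taking $\sigma$ left-continuous so that $P(t_k)=P_{j}(t_k)$ with $j=\sigma(t_k^-)$ and $P(t_k^+)=P_{i}(t_k^+)$ with $i=\sigma(t_k^+)$. With this identification the first hypothesis is exactly the flow inequality $\dot P(t)+\mathcal{C}_t^{*}(P(t))\preceq-\alpha_3 I$ of statement (c) of Theorem \ref{th:Main:Imp:persflow}, because $\He[P_i A_{i,0}]+\sum_j A_{i,j}^{\T}P_i A_{i,j}=\mathcal{C}_t^{*}(P_i)$ by \eqref{eq:operatorLtadjoint}. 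As $\mathcal{D}_k=\mathrm{id}$ is self-adjoint, the jump inequality $\mathcal{D}_k^{*}(P(t_k^+))-P(t_k)\preceq0$ reads $P_i(t_k^+)-P_j(t_k)\preceq0$, which is precisely the second hypothesis, and the standing assumption $\mathcal{D}_k(X)=X\preceq X$ of the persistent-flowing theorem holds trivially. Invoking Theorem \ref{th:Main:Imp:persflow} then yields global uniform exponential stability of $X(t)$ with hybrid rate $(\alpha,1)$, and translating back through $\|X(t)\|_*=\E[\|x(t)\|_2^2]$ gives the claimed mean-square exponential stability.

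For a self-contained alternative I would argue directly with $V(t,X)=\langle P_{\sigma(t)}(t),X\rangle$. Proposition \ref{prop:PQ} gives the sandwich $\alpha_1\|X\|_*\le V(t,X)\le\alpha_2\|X\|_*$; differentiating along the flow of mode $i$ produces $\dot V=\langle\dot P_i+\mathcal{C}_t^{*}(P_i),X\rangle\le-\alpha_3\|X\|_*\le-(\alpha_3/\alpha_2)V$; and at a switch from $j$ to $i$ the continuity $X(t_k^+)=X(t_k)$ gives $V(t_k^+)-V(t_k)=\langle P_i(t_k^+)-P_j(t_k),X(t_k)\rangle\le0$ by statement \eqref{st:norm:3} of Proposition \ref{prop:PQ}. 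Integrating the strict flow decay across the non-increasing switches yields $V(t)\le e^{-(\alpha_3/\alpha_2)(t-t_0)}V(t_0)$, and hence $\|X(t)\|_*\le(\alpha_2/\alpha_1)e^{-(\alpha_3/\alpha_2)(t-t_0)}\|X_0\|_*$.

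The step requiring the most care is the treatment of the switching instants. The only genuine subtlety is that the switched system's state is continuous across $t_k$, so the entire ``jump'' of the Lyapunov function originates from the change of certificate $P_{\sigma(t)}$ rather than from any reset of $X$; it is the inequality $P_i(t_k^+)-P_j(t_k)\preceq0$ that guarantees $V$ does not increase there, and its equivalence to $\langle P_i(t_k^+)-P_j(t_k),X\rangle\le0$ for all $X\succeq0$ rests squarely on Proposition \ref{prop:PQ}. I would also note that, since the flow inequality is strict while the jump inequality is merely non-increasing, the conclusion actually holds for every admissible switching sequence respecting the prescribed dwell-time $\{T_k\}_{k\ge0}$, the dwell-time playing no quantitative role in the resulting decay rate.
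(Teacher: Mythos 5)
Your proposal is correct and follows essentially the same route as the paper: reduce \eqref{eq:switched:det} to the matrix-valued second-order moment system, observe that state continuity at switches makes the jump map the identity so that $\mathcal{D}_k(X)-X\preceq0$ holds trivially, and invoke the persistent-flowing result (Theorem \ref{th:Main:Imp:persflow}) with the piecewise certificate $P_{\sigma(t)}(t)$. Your write-up is in fact more detailed than the paper's two-line proof --- in particular the explicit self-contained Lyapunov argument and the careful handling of the certificate change at $t_k$ via Proposition \ref{prop:PQ} --- but it is the same argument in substance.
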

\begin{proof}
The dynamics of the \blue{seconder-order moment}  matrix $X(t)=\E[x(t)x(t)^{\T}]$ is given by
\begin{equation}
\begin{array}{rcl}
  \dot{X}(t)&=&A_{\sigma(t),0}(t)X(t)+A_{\sigma(t),0}(t)^{\T}X(t)+\sum_{j=1}^{N}A_{\sigma(t),\ell}^{\T}X(t)A_{\sigma(t),\ell}\\
  X(t_k^+)&=&X(t_k)
\end{array}
\end{equation}
Since, $\mathcal{D}_k(X)-X\preceq0$, then we can apply Theorem \ref{th:generalImpulsiveSystems:jdksjdlasdjakljdlsa:persflow} to yield the result.
\end{proof}

When the sequence of dwell-times satisfies a certain condition, we can specialize the above result to obtain more tractable stability conditions. The next result applies to the case when the constant dwell-time case, that is, when $T_k=\bar T$, $k\ge0$, for some constant dwell-time value $\bar T>0$ and can be seen as an extension of the results in \cite{Allerhand:11,Shaked:14,Briat:14f}:
\begin{corollary}[Constant dwell-time]\label{cor:switched:cstDT}
  Assume that there exist differentiable matrix-valued functions $P_i:\mathbb{R}_{\ge0}\mapsto\pd^n$, $i=1,\ldots,M$, such that
    \begin{equation}
      \begin{array}{rclcl}
         \dot{P}_{i}(\tau)+A_{i,0}^{\T}P_{i}(\tau)+P_{i}(\tau)\bar{A}_{i,0}+\sum_{j=1}^N A_{i,j}^{\T}P_{i}(\tau)A_{i,j}&\preceq&-\alpha I,&& \tau\in[0,\bar T],\ i=1,\ldots,M,\\
        P_{i}(0)-P_{j}(\bar T)&\preceq&0,&& i,j=1,\ldots,M,
      \end{array}
    \end{equation}
  hold for some  $\alpha>0$.

  Then, the LTI version of the system \eqref{eq:switched:det} is mean-square exponentially stable under eventual constant dwell-time $\bar T$.
\end{corollary}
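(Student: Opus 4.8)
The plan is to obtain this corollary as a direct specialization of the fixed dwell-time result (Theorem \ref{cor:switched:fixedDT}) to the constant dwell-time LTI setting. The essential simplification afforded by a constant dwell-time is that every inter-switch interval $[t_k,t_{k+1})$ has the same length $\bar T$, so one may work entirely with the local clock $\tau:=t-t_k\in[0,\bar T]$ and employ mode-dependent Lyapunov functions that are $\bar T$-periodic in this clock. Concretely, I would take the functions $P_i:[0,\bar T]\mapsto\pd^n$ furnished by the hypothesis and extend each to all of $\mathbb{R}_{\ge0}$ by setting $P_i(t):=P_i(t-t_k)$ for $t\in[t_k,t_{k+1})$, with $t_k=k\bar T$ (taking $t_0=0$). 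Since each $P_i$ is continuous and positive definite on the compact interval $[0,\bar T]$, its smallest and largest eigenvalues attain a positive minimum and a finite maximum there; hence there exist $\alpha_1,\alpha_2>0$ with $\alpha_1I\preceq P_i(\tau)\preceq\alpha_2I$ on $[0,\bar T]$, and these uniform bounds persist under the periodic extension, supplying the boundedness hypotheses required by Theorem \ref{cor:switched:fixedDT}.

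Next I would verify that the two conditions of the fixed dwell-time theorem hold for this extension. For the flow condition, on each interval $[t_k,t_{k+1})$ one has $\dot{P}_i(t)=\dot{P}_i(\tau)$ (since $t_k$ is constant on the interval), and because the system is LTI the matrices $A_{i,0},A_{i,j}$ do not depend on $t$; thus the first inequality of the corollary is exactly the flow inequality of Theorem \ref{cor:switched:fixedDT}. For the jump condition, note that at a switch time $t_k$ from mode $j=\sigma(t_k^-)$ to mode $i=\sigma(t_k^+)$ the continuous state is unchanged, so $\mathcal{D}_k=\mathrm{id}$; the Lyapunov value just before the switch is governed by $P_j$ with the clock at $\tau=\bar T$, i.e. by $P_j(\bar T)$, whereas just after the switch it is governed by $P_i$ with the clock reset to $\tau=0$, i.e. by $P_i(0)$. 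The required inequality $P_i(t_k^+)-P_j(t_k)\preceq0$ therefore reads $P_i(0)-P_j(\bar T)\preceq0$, which is precisely the second condition of the corollary (imposed there for all $i,j$, hence in particular for the switching pairs $i\ne j$ that actually occur).

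Having matched both conditions, the conclusion follows: since the continuous state does not jump at switches one has $\mathcal{D}_k(X)=X\preceq X$, so the persistent-flowing hypothesis underlying Theorem \ref{cor:switched:fixedDT} (namely Theorem \ref{th:Main:Imp:persflow}) is satisfied, and mean-square exponential stability is inherited. The word \emph{eventual} in the statement only means that the dwell-time equals $\bar T$ past some finite initial transient; since a finite number of initial segments contributes only a bounded multiplicative factor, it does not affect the exponential estimate, and the same construction applies to all segments of length $\bar T$.

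The argument is essentially bookkeeping, and the only point requiring genuine care is the clock reparametrization: one must check that differentiating the periodic extension with respect to $t$ reproduces the $\tau$-derivative appearing in the hypothesis, and that the boundary values $\tau=0$ and $\tau=\bar T$ are correctly identified with the post- and pre-switch instants respectively. I do not anticipate a substantive obstacle beyond this matching, since the heavy lifting---constructing the Lyapunov functional and establishing the exponential bound---has already been carried out in Theorem \ref{th:Main:Imp:persflow} and inherited by Theorem \ref{cor:switched:fixedDT}.
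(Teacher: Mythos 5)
Your proposal is correct and follows essentially the same route as the paper, whose one-line proof simply picks the $\bar T$-periodic extension $P(t_k+\tau)=P(\tau)$ with $P(0)=P(t_k^+)$ and invokes Theorem \ref{cor:switched:fixedDT}. Your version merely spells out the bookkeeping the paper leaves implicit (uniform eigenvalue bounds via compactness of $[0,\bar T]$, the clock reparametrization, the identification $P_i(t_k^+)=P_i(0)$, $P_j(t_k)=P_j(\bar T)$, and the harmless finite initial transient behind the word ``eventual''), all of which is sound.
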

\begin{proof}
  The proof simply follows from picking $P(t_k+\tau)=P(\tau)$ with $P(0)=P(t_k^+)$.
\end{proof}

The following result addresses the case where the dwell-time sequence satisfies $T_k\ge \bar T$, $k\ge1$, for some minimum dwell-time value $\bar T$ and can also be seen as a generalization of the results in \cite{Allerhand:11,Shaked:14,Briat:14f}:
\begin{corollary}[Minimum dwell-time]\label{cor:switched:minDT}
  Assume that there exist differentiable matrix-valued functions $P_i:\mathbb{R}_{\ge0}\mapsto\pd^n$, $i=1,\ldots,M$, such that
    \begin{equation}
      \begin{array}{rclcl}
          A_{i,0}^{\T}P_{i}(\bar T)+P_{i}(\bar T)\bar{A}_{i,0}+\sum_{j=1}^N A_{i,j}^{\T}P_{i}(\bar T)A_{i,j}&\preceq&-\alpha I,&& i=1,\ldots,M,\\
         \dot{P}_{i}(\tau)+A_{i,0}^{\T}P_{i}(\tau)+P_{i}(\tau)\bar{A}_{i,0}+\sum_{j=1}^N A_{i,j}^{\T}P_{i}(\tau)A_{i,j}&\preceq&-\alpha I,&& \tau\in[0,\bar T],\ i=1,\ldots,M,\\
        P_{i}(0)-P_{j}(\bar T)&\preceq&0,&& i,j=1,\ldots,M,
      \end{array}
    \end{equation}
  hold for some  $\alpha>0$.

  Then, the LTI version of the system \eqref{eq:switched:det} is mean-square exponentially stable under minimum dwell-time $\bar T$.
\end{corollary}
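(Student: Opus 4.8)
The plan is to reduce the claim, exactly as in Theorem~\ref{cor:switched:fixedDT} and Corollary~\ref{cor:switched:cstDT}, to an application of the persistent-flowing relaxation Theorem~\ref{th:Main:Imp:persflow}. First I would observe that the second-order moment $X(t)=\E[x(t)x(t)^{\T}]$ of the LTI version of \eqref{eq:switched:det} obeys a matrix-valued impulsive system whose flow is generated by the mode-dependent operator $\mathcal{C}_t$ (built from $A_{\sigma,0}$ and the $A_{\sigma,j}$'s) and whose jump map at the switching instants is the identity, since the state $x$ is continuous across switches. Hence $\mathcal{D}_k(X)=X\preceq X$ and the standing hypothesis $\mathcal{D}_k(X)\preceq X$ of Theorem~\ref{th:Main:Imp:persflow} is met.

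Next I would build a piecewise-differentiable, mode- and clock-dependent Lyapunov matrix by the elapsed-time reparametrization
\[
  P(t_k+\tau)=\begin{cases} P_{\sigma(t_k^+)}(\tau), & \tau\in[0,\bar T],\\ P_{\sigma(t_k^+)}(\bar T), & \tau\ge\bar T, \end{cases}
\]
where $\tau$ denotes the time elapsed since the last switch. The minimum dwell-time constraint $T_k\ge\bar T$ guarantees that this definition is consistent on every flow interval and, crucially, that immediately before any switch the clock has reached at least $\bar T$, so that the pre-switch value equals the frozen matrix $P_{\sigma(t_k^-)}(\bar T)$.

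Then I would verify the two inequalities of statement (c) of Theorem~\ref{th:Main:Imp:persflow}. For the flow condition $\dot P(t)+\mathcal{C}_t^{*}(P(t))\preceq-\alpha_3 I$: on $[0,\bar T]$ it is precisely the second assumed inequality, while on $[\bar T,\infty)$ one has $\dot P\equiv0$, so the left-hand side collapses to $A_{i,0}^{\T}P_i(\bar T)+P_i(\bar T)\bar A_{i,0}+\sum_j A_{i,j}^{\T}P_i(\bar T)A_{i,j}$, which is bounded above by $-\alpha I$ by the first assumed inequality. For the jump condition $\mathcal{D}_k^{*}(P(t_k^+))-P(t_k)\preceq0$, since $\mathcal{D}_k^{*}=\mathrm{id}$ this reads $P_{\sigma(t_k^+)}(0)-P_{\sigma(t_k^-)}(\bar T)\preceq0$, which is exactly the third assumed inequality — here the minimum dwell-time property is what lets us identify $P(t_k^-)$ with $P_{\sigma(t_k^-)}(\bar T)$. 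Invoking Theorem~\ref{th:Main:Imp:persflow} then yields global uniform exponential stability of the moment system with hybrid rate $(\alpha,1)$, i.e.\ a bound of the form $\|X(t)\|\le\beta e^{-\alpha(t-t^0)}\|X_0\|$; since $\E[\|x(t)\|_2^2]=\trace X(t)$, this is precisely mean-square exponential stability.

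The main obstacle I anticipate is purely technical: the matrix $P(t)$ so constructed is in general only continuous, and not differentiable, at the junction $\tau=\bar T$. This is harmless for the Lyapunov argument because Theorem~\ref{th:Main:Imp:persflow} only requires piecewise differentiability with the flow inequality holding on each smooth piece; alternatively, as already noted after Corollary~\ref{cor:switched:cstDT}, the smoothing result of \cite{Holicki:19} permits replacing $P$ by a genuinely differentiable function with $\dot P(\bar T)=0$ that still satisfies all the conditions, without altering the conclusion.
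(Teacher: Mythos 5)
Your proposal is correct and takes essentially the same approach as the paper: the paper's proof likewise freezes the clock-dependent matrices via $P_i(\tau)=P_i(\bar T)$ for $\tau\ge\bar T$ (so that, under minimum dwell-time, the pre-switch value is $P_{\sigma(t_k^-)}(\bar T)$ and the jump condition of the persistent-flowing result, Theorem \ref{th:Main:Imp:persflow}, reduces to the third assumed inequality, the jumps being the identity), and it handles the non-differentiability at $\tau=\bar T$ by the same smoothing argument of \cite{Holicki:19}. Your write-up simply makes explicit the verification steps the paper leaves implicit.
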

\begin{proof}
The proof follows from considering all the matrix valued functions $P_i(\tau)$ such that $P_i(\tau)=P_i(\bar T)$ for all $\tau\ge\bar T$. Even though those functions are not differentiable at $\tau=\bar T$, then it is possible to shown that if the conditions in the result hold, then there exist differentiable functions that satisfy the conditions of the result; see e.g. \citep{Holicki:19}.
\end{proof}

The following result addresses the case where the dwell-time sequence satisfies $T_k\in[\Tmin^{\sigma(t_k^+)},\Tmax^{\sigma(t_k^+)}]$, $k\ge1$, for some range dwell-time value $[\Tmin^1,\Tmax^1]\times\ldots\times[\Tmin^M,\Tmax^M]$, and can be seen as an extension of the results in \cite{Briat:14f}:
\begin{corollary}[Mode-dependent range dwell-time]\label{cor:switched:rangeDT}
  Assume that there exist differentiable matrix-valued functions $P_i:\mathbb{R}_{\ge0}\mapsto\pd^n$, $i=1,\ldots,M$, such that
    \begin{equation}
      \begin{array}{rclcl}
         \dot{P}_i (\tau)+A_{0,i }^{\T}P_i (\tau)+P_i (\tau)A_{0,i }+\sum_{j=1}^NA_{i,j}^{\T}P_i (\tau)A_{i,j}&\preceq&-\alpha I,&& i =1,\ldots,M,\ \tau\in[0,\Tmax^i],\\
         P_{i}(0)-P_{j}(\theta)&\preceq&0,&& i,j=1,\ldots,M,\ i\ne j,\ \theta\in[\Tmin^j,\Tmax^j],
      \end{array}
    \end{equation}
  hold for some $\alpha>0$.

  Then, the LTI version of the system \eqref{eq:switched:det} is mean-square exponentially stable under the mode-dependent range dwell-time $[\Tmin^1,\Tmax^1]\times\ldots\times[\Tmin^M,\Tmax^M]$.
\end{corollary}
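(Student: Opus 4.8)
The plan is to reduce the claim to the persistent-flowing stability result, Theorem \ref{th:Main:Imp:persflow}, applied to the second-order moment matrix $X(t)=\E[x(t)x(t)^{\T}]$. As in the proof of Theorem \ref{cor:switched:fixedDT}, the LTI second-order moment dynamics reads $\dot{X}(t)=A_{\sigma(t),0}X(t)+X(t)A_{\sigma(t),0}^{\T}+\sum_{j=1}^{N}A_{\sigma(t),j}^{\T}X(t)A_{\sigma(t),j}$ with the trivial jump $X(t_k^+)=X(t_k)$, so the jump operator is $\mathcal{D}_k=\mathrm{id}$ and the hypothesis $\mathcal{D}_k(X)\preceq X$ of Theorem \ref{th:Main:Imp:persflow} holds with equality. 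The switching times $t_k$ play the role of the impulse instants, and the mode-dependent range dwell-time constraint means precisely that $T_{k-1}:=t_k-t_{k-1}\in[\Tmin^{\sigma_{k-1}},\Tmax^{\sigma_{k-1}}]$, where $\sigma_{k-1}:=\sigma(t_{k-1}^+)$ denotes the mode active on $(t_{k-1},t_k]$.

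First I would fix an arbitrary admissible switching signal and build the time-varying certificate $P(t):=P_{\sigma_k}(t-t_k)$ for $t\in(t_k,t_{k+1}]$, i.e. a clock-parametrised, mode-dependent gluing of the given functions $P_i$ in which the clock is reset at each switch. Because there are only finitely many modes and each $P_i$ is continuous on the compact interval $[0,\Tmax^i]$ over which it is needed, the uniform bounds $\alpha_1 I\preceq P(t)\preceq\alpha_2 I$ follow at once. This $P$ is piecewise differentiable, hence an admissible candidate for statement (c) of Theorem \ref{th:Main:Imp:persflow}.

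Next I would verify the flow and jump conditions of that theorem. On the flow interval $(t_k,t_{k+1}]$ the mode is $i=\sigma_k$ and the clock value $\tau=t-t_k$ ranges in $(0,T_k]\subseteq(0,\Tmax^i]$; since the first hypothesis holds for all $\tau\in[0,\Tmax^i]$, it yields exactly $\dot{P}(t)+\mathcal{C}_t^*(P(t))\preceq-\alpha I$, the flow inequality of the persistent-flowing result, where $\mathcal{C}_t^*$ is the adjoint generator of mode $i$. At a switching instant $t_k$ the system leaves mode $j:=\sigma_{k-1}$, whose elapsed dwell-time is $\theta:=T_{k-1}\in[\Tmin^j,\Tmax^j]$, and enters mode $i:=\sigma_k$; since $X$ does not jump, the jump inequality reduces to $P(t_k^+)-P(t_k)=P_i(0)-P_j(\theta)\preceq 0$, which is exactly the second hypothesis evaluated at the indices $(i,j)$ and the admissible timer value $\theta$ (using the standard convention $\sigma_k\ne\sigma_{k-1}$). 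Invoking Theorem \ref{th:Main:Imp:persflow} then gives global uniform exponential stability of $X(t)$ with hybrid rate $(\alpha',1)$ for some $\alpha'>0$, which is mean-square exponential stability of \eqref{eq:switched:det}.

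The main obstacle is the bookkeeping that aligns the resetting clock with the mode-dependent ranges: one must check that the flow certificate is required only on $[0,\Tmax^i]$ for the mode $i$ actually in use and, crucially, that at every switch the elapsed time $\theta$ of the mode being abandoned falls in that mode's range $[\Tmin^j,\Tmax^j]$, so that the second inequality applies with the correct pair of indices. A minor side-point is to ensure the impulse times grow unboundedly (persistent flowing), which holds provided $\min_i\Tmin^i>0$: the total flow time is then infinite, and the exponential-in-$t$ decay of the persistent-flowing estimate transfers directly to exponential mean-square stability.
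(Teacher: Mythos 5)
Your proposal is correct and follows essentially the same route as the paper: the paper's one-line proof implicitly relies on the reduction (via Theorem \ref{cor:switched:fixedDT}) of the second-order moment dynamics to the persistent-flowing result of Theorem \ref{th:Main:Imp:persflow} with identity jump operator, together with the clock-parametrised gluing $P(t)=P_{\sigma_k}(t-t_k)$, and you merely spell out the bookkeeping (the elapsed dwell-time $\theta=T_{k-1}\in[\Tmin^j,\Tmax^j]$ of the abandoned mode, uniform bounds by compactness, and $\min_i\Tmin^i>0$ for unbounded impulse times) that the paper leaves implicit. No gaps.
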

\begin{proof}
The proof simply consists of considering $T_k^i$ to lie within the interval $[\Tmin^i,\Tmax^i]$, $i=1,\ldots,M$, in the conditions.
\end{proof}}


\subsubsection{Sampled-data stochastic LTV systems}


\blue{We illustrate in this section that the results obtained in this paper allow to retrieve and extend those developed in \citep{Briat:15i}. To this aim, let us consider again the stochastic LTV sampled-data system \eqref{eq:syst:Impulsive_SD1}-\eqref{eq:syst:Impulsive_SD2} and assume now that the sampling times are deterministic. This leads to the following result that can be seen as an extension of the results in \citep{Briat:15i}:
\begin{theorem}\label{cor:LTV:SD}
  The following statements are equivalent:
  \begin{enumerate}[(a)]
    \item The LTV stochastic sampled-data system \eqref{eq:syst:Impulsive_SD1}-\eqref{eq:syst:Impulsive_SD2} is uniformly mean-square exponentially stable.
    \item There exist a differentiable matrix-valued function $P:\mathbb{R}_{\ge0}\mapsto\pd^n$, $\alpha_1I\preceq P(\cdot)\preceq \alpha_2I$ and two matrix-valued functions $K_1:\mathbb{Z}_{\ge0}\mapsto\mathbb{R}^{m\times n}$ and $K_2:\mathbb{Z}_{\ge0}\mapsto\mathbb{R}^{m\times m}$ such that
    \begin{equation}
  \dot{P}(t)+\bar{A}_0(t)^{\T}  P(t)+  P(t)\bar{A}_0(t)+\sum_{i=1}^N\bar{A}_i(t)^{\T}  P(t)\bar{A}_i(t)\preceq- \alpha_3I,\ t\ne t_k,\ t\ge t^0
\end{equation}
and
\begin{equation}
  \bar{J}(k)^{\T}  P(t_k^+)\bar{J}(k)-  P(t_k)\preceq- \alpha_3I,\ k\ge1, t_k> t^0
\end{equation}
hold for some $\alpha_1,\alpha_2,\alpha_3>0$  where
\begin{equation}
  \bar{A}_i(t):=\begin{bmatrix}
    A_i(t) & B_i(t)\\
    0 & 0
  \end{bmatrix},\ \textnormal{and }\bar J(k):=\begin{bmatrix}
    I & 0\\
    K_1(k) & K_2(k).
  \end{bmatrix}
\end{equation}
\item There exist a differentiable matrix-valued function $Q:\mathbb{R}_{\ge0}\mapsto\pd^n$, $\alpha_1I\preceq Q(\cdot)\preceq \alpha_2I$ and a matrix-valued function $U:\mathbb{Z}_{\ge0}\mapsto\mathbb{R}^{(m+n)\times n}$ such that
    \begin{equation}
    \begin{bmatrix}
         -\dot{Q}(t)+Q(t)\bar{A}_0(t)^{\T} +  \bar{A}_0(t)Q(t) & \row_{i=1}^N[Q(t)\bar{A}_i(t)^{\T}]\\
         \star & -I_N\otimes Q(t)
    \end{bmatrix} \preceq- \alpha_3I,\ t\ne t_k,\ t\ge t^0
\end{equation}
and
\begin{equation}
\begin{bmatrix}
    -Q(t_k) & (\bar{J}_0Q(t_k)+BU(k))^{\T}\\
  \star & -Q(t_k^+)
\end{bmatrix}\preceq- \alpha_3I,\ k\ge1, t_k> t^0
  \end{equation}
  hold for some $\alpha_1,\alpha_2,\alpha_3>0$ and where
  \begin{equation}
  \bar{J}_0:=\begin{bmatrix}
    I & 0\\
    0 & 0
  \end{bmatrix},\ \textnormal{and }\bar B:=\begin{bmatrix}
    0\\
    I
  \end{bmatrix}.
\end{equation}
  Moreover, suitable gains are given by $\begin{bmatrix} K_1(k) & K_2(k)\end{bmatrix}=U(k)Q(t_k)^{-1}$.
  \end{enumerate}
\end{theorem}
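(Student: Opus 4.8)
The plan is to reduce the statement to the general impulsive results of Theorem~\ref{th:Main:Imp} and Theorem~\ref{th:Main:Imp:stabz} by first recasting the sampled-data system \eqref{eq:syst:Impulsive_SD1}-\eqref{eq:syst:Impulsive_SD2} as a matrix-valued impulsive system of the form \eqref{eq:matdiffeqHybridLTV:stabz}. Following the reformulation already used in the Poissonian-sampling example, I would augment the state as $z(t):=\begin{bmatrix} x(t)^\T & u(t)^\T\end{bmatrix}^\T$, so that the held input becomes a flow-invariant component ($\dot u=0$ between samples) that is updated only at the sampling instants. This yields the impulsive stochastic system $\d z=\bar A_0(t)z\,\dt+\sum_{i=1}^N\bar A_i(t)z\,\d W_i$ on $(t_k,t_{k+1}]$ together with the jump rule $z(t_k^+)=\bar J(k)z(t_k)$, where $\bar A_i$ and $\bar J$ are exactly the matrices displayed in the statement and the feedback gains $K_1,K_2$ enter only through $\bar J$. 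Since the sampling times are now deterministic, the impulse sequence $\{t_k\}$ is deterministic and the second-order moment $X(t):=\E[z(t)z(t)^\T]$ obeys the deterministic matrix-valued impulsive system $\dot X=\bar A_0X+X\bar A_0^\T+\sum_i\bar A_iX\bar A_i^\T$ along flows and $X(t_k^+)=\bar J(k)X(t_k)\bar J(k)^\T$ at jumps, which is precisely an instance of \eqref{eq:matdiffeqHybridLTV} with $\mu\equiv0$.

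With this reformulation in hand, the equivalence of statement (a) and statement (b) would follow by applying Theorem~\ref{th:Main:Imp} to the moment system. I would identify the flow-generator adjoint $\mathcal C_t^*(P)=\bar A_0^\T P+P\bar A_0+\sum_i\bar A_i^\T P\bar A_i$ and the jump adjoint $\mathcal D_k^*(P)=\bar J(k)^\T P\bar J(k)$, so that the two inequalities in statement (b) are exactly the flow and jump conditions \eqref{eq:SecondOrderMomentImpulsiveLyapunov2} of Theorem~\ref{th:Main:Imp}. The one point requiring a word of justification is that uniform mean-square exponential stability of the original $x$-process is equivalent to exponential stability of the augmented moment matrix $X$: one direction is immediate since $x$ is a subvector of $z$, and the converse uses that the gains $K_1,K_2$ are uniformly bounded, so that decay of $\E[||x||^2]$ at sampling instants controls $\E[||u||^2]$ on the following interval, giving exponential decay of $\E[||z||^2]=\E[||x||^2]+\E[||u||^2]$.

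For the equivalence of statement (b) and statement (c) I would follow the stabilization argument of Theorem~\ref{th:Main:Imp:stabz}. Performing the congruence transformation with $Q(t):=P(t)^{-1}$ and using $Q\dot P Q=-\dot Q$, the flow inequality becomes $-\dot Q+\bar A_0Q+Q\bar A_0^\T+\sum_i(\bar A_iQ)^\T Q^{-1}(\bar A_iQ)\preceq0$, and a Schur complement on the quadratic term produces the block flow LMI of statement (c); no feedback appears here, the flow being fixed. For the jump condition I would write $\bar J(k)=\bar J_0+\bar B\begin{bmatrix}K_1(k)&K_2(k)\end{bmatrix}$ with $\bar J_0,\bar B$ as in the statement, set $U(k):=\begin{bmatrix}K_1(k)&K_2(k)\end{bmatrix}Q(t_k)$, and observe that $\bar J(k)Q(t_k)=\bar J_0Q(t_k)+\bar BU(k)$; congruence with $Q(t_k)$ followed by a Schur complement with respect to $Q(t_k^+)$ then yields the block jump LMI, and inverting the change of variables gives the gain formula $\begin{bmatrix}K_1(k)&K_2(k)\end{bmatrix}=U(k)Q(t_k)^{-1}$.

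The main obstacle is not in the algebra, which is routine once the reformulation is fixed, but in cleanly establishing the stability equivalence between the $x$-process and the augmented moment system, and in correctly bookkeeping the block structure and dimensions of $\bar A_i$, $\bar J$, $\bar B$ and $U$ through the congruence and Schur-complement steps. In particular, one must keep track of the fact that the feedback enters only at the jumps, through $\bar B$ acting on the $u$-block, so that the flow LMI is gain-free while only the jump LMI is a genuine design inequality.
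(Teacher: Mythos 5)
Your proposal follows essentially the same route as the paper: augmenting the state to $z=(x,u)$ so the held input flows trivially and jumps via $\bar J(k)$, passing to the second-order moment system $X(t)=\E[z(t)z(t)^{\T}]$, and applying Theorem~\ref{th:Main:Imp} for the equivalence of (a) and (b), with the congruence $Q=P^{-1}$, the change of variables $U(k)=\begin{bmatrix}K_1(k)&K_2(k)\end{bmatrix}Q(t_k)$, and Schur complements handling (b)$\Leftrightarrow$(c) exactly as in Theorem~\ref{th:Main:Imp:stabz}. Your algebra is correct, and your extra care about relating decay of the $x$-process to the augmented moment system is a detail the paper leaves implicit rather than a divergence in method.
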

\begin{proof}
  The  system \eqref{eq:syst:Impulsive_SD1}-\eqref{eq:syst:Impulsive_SD2} can be reformulated as the following impulsive system
\begin{equation}
  \d z(t)=\bar{A}_0(t)z(t)\dt+\sum_{i=1}^N\bar{A}_i(t)z(t)\d W_i(t)
\end{equation}
and
\begin{equation}
  z(t_k^+)=\bar Jz(t_k)
\end{equation}
where $A_i$, $i=1,\ldots,M$, and $\bar J(k)$ are defined in the result. The \blue{seconder-order moment} system associated with the above impulsive system is given by
\begin{equation}
  \dot{X}(t)=\bar{A}_0(t)X(t)+X(t)\bar{A}_0(t)^{\T}+\sum_{i=1}^N\bar{A}_i(t)X(t)\bar{A}_i(t)^{\T}
\end{equation}
and
\begin{equation}
  X(t_k^+)=\bar{J}X(t_k)\bar{J}^{\T}.
\end{equation}
Applying then Theorem \ref{th:generalImpulsiveSystems:jdksjdlasdjakljdlsa} yields the result.
\end{proof}

The following results can be seen as generalizations of those in \citep{Briat:13d,Briat:15i}:
\begin{corollary}[Periodic sampling]\label{cor:LTV:SD:cstDT}
    The following statements are equivalent:
    \begin{enumerate}[(a)]
    \item The LTI version of the sampled-data system \eqref{eq:syst:Impulsive_SD1}-\eqref{eq:syst:Impulsive_SD2} is mean-square exponentially stable under periodic sampling $\bar T$.
    %
\item There exist a differentiable matrix-valued function $Q:[0,\bar T]\mapsto\pd^n$ and a matrix $U\in\mathbb{R}^{(m+n)\times n}$ such that
    \begin{equation}
    \begin{bmatrix}
         -\dot{Q}(\tau)+Q(\tau)\bar{A}_0^{\T} +  \bar{A}_0Q(\tau) & \row_{i=1}^N[Q(\tau)\bar{A}_i^{\T}]\\
         \star & -I_N\otimes Q(\tau)
    \end{bmatrix} \preceq- \alpha I,\ \tau\in[0,\bar T]
\end{equation}
and
\begin{equation}
\begin{bmatrix}
    -Q(\bar T) & (\bar{J}_0Q(\bar T)+BU)^{\T}\\
  \star & -Q(0)
\end{bmatrix}\preceq- \alpha I,\ k\ge1, t_k> t^0
  \end{equation}
  hold  for some $\alpha>0$.where
  \begin{equation}
  \bar{J}_0:=\begin{bmatrix}
    I & 0\\
    0 & 0
  \end{bmatrix},\ \bar B:=\begin{bmatrix}
    0\\
    I
  \end{bmatrix}.
\end{equation}
  Moreover, suitable gains are given by $\begin{bmatrix} K_1 & K_2\end{bmatrix}=UQ(\bar T)^{-1}$.
  \end{enumerate}

\end{corollary}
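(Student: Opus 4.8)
The plan is to obtain both implications directly from Theorem~\ref{cor:LTV:SD} by specializing its statement~(c) to the time-invariant data $\bar A_i(t)\equiv\bar A_i$ and to the periodic impulse sequence $t_{k+1}-t_k=\bar T$. The key observation, shared with the constant dwell-time results proved earlier, is that the closed-loop second-order moment dynamics are $\bar T$-periodic, so that it is necessary and sufficient to look for a $\bar T$-periodic certificate, namely a matrix-valued function satisfying $Q(t_k+\tau)=Q(\tau)$ for all $\tau\in[0,\bar T]$ and all $k\ge0$, together with a constant $U(k)\equiv U$. Under this parametrization the value of the certificate just after a jump is $Q(t_k^+)=Q(0)$, whereas the value reached at the end of a flowing interval, just before the next jump, is $Q(t_{k+1})=Q(\bar T)$.

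First I would treat the sufficiency direction, i.e. statement~(b) implies statement~(a). Given $Q:[0,\bar T]\mapsto\pd^n$ and $U$ satisfying the two inequalities of the corollary, I would extend them periodically as above. Because the data are constant, the flowing inequality of the corollary coincides on each interval $(t_k,t_{k+1}]$ with the flowing inequality of Theorem~\ref{cor:LTV:SD}(c), while the jump inequality of the corollary, read with $Q(t_k)=Q(\bar T)$ and $Q(t_k^+)=Q(0)$, coincides with the jump inequality of Theorem~\ref{cor:LTV:SD}(c) for $U(k)=U$. The uniform bounds $\alpha_1 I\preceq Q(\cdot)\preceq\alpha_2 I$ follow from continuity of $Q$ on the compact interval $[0,\bar T]$. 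Theorem~\ref{cor:LTV:SD} then yields the claimed mean-square exponential stability, and the stabilizing gains are recovered through $[K_1\ K_2]=UQ(\bar T)^{-1}$, since the jump gain in Theorem~\ref{cor:LTV:SD}(c) is evaluated at $t_k$, where $Q(t_k)=Q(\bar T)$.

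For the necessity direction, statement~(a) implies statement~(b), I would invoke Theorem~\ref{cor:LTV:SD}(c) to obtain a certificate $Q$ and then exploit the periodicity of the closed-loop moment system to replace it by a $\bar T$-periodic one. The cleanest route is to restrict the explicit construction of $\bar P$ used in the proof of Theorem~\ref{th:Main:Imp} to a single period: since the impulse sequence and the data repeat with period $\bar T$, the resulting certificate depends only on the phase $\tau=t-t_k$ and is therefore automatically $\bar T$-periodic. Reading off its restriction to $[0,\bar T]$ and taking $U$ constant then produces a feasible pair $(Q,U)$ for statement~(b).

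The main obstacle is the necessity direction: one must argue rigorously that periodicity of the data permits choosing a periodic certificate without loss of generality, which amounts to checking that the integral/summation construction of Theorem~\ref{th:Main:Imp} inherits the $\bar T$-periodicity of the underlying moment dynamics. The sufficiency direction is essentially bookkeeping, consisting of matching the jump endpoints $Q(0)$ and $Q(\bar T)$ with $Q(t_k^+)$ and $Q(t_k)$, and the uniform positivity and boundedness follow from compactness of $[0,\bar T]$.
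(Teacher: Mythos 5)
Your proposal is correct and follows essentially the same route as the paper, which likewise proves the corollary by choosing a $\bar T$-periodic certificate $P(t_k+\tau)=P(\tau)$ (so that $Q(t_k)=Q(\bar T)$ and $Q(t_k^+)=Q(0)$) in the conditions of Theorem \ref{cor:LTV:SD}; your added justification that the explicit construction in Theorem \ref{th:Main:Imp} inherits the periodicity of the data is exactly the (unstated) content of the paper's one-line necessity argument.
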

\begin{proof}
  Due to periodicity of the impulse times period, we can choose $P(t_k+\tau)=P(\tau)$, $\tau\in(0,\bar T]$, in the conditions of Theorem \ref{cor:LTV:SD}, which proves the result.
\end{proof}

\begin{corollary}[Aperiodic sampling]\label{cor:LTV:SD:cstDT}
    The following statements are equivalent:
    \begin{enumerate}[(a)]
    \item The LTI version of the sampled-data system \eqref{eq:syst:Impulsive_SD1}-\eqref{eq:syst:Impulsive_SD2} is mean-square exponentially stable under aperiodic sampling $[\Tmin,\Tmax]$.
    %
\item There exist a differentiable matrix-valued function $Q:[0,\bar T]\mapsto\pd^n$ and a matrix $U:[0,\Tmax]\mapsto\mathbb{R}^{(m+n)\times n}$ such that
    \begin{equation}
    \begin{bmatrix}
         -\dot{Q}(\tau)+Q(\tau)\bar{A}_0^{\T} +  \bar{A}_0Q(\tau) & \row_{i=1}^N[Q(\tau)\bar{A}_i^{\T}]\\
         \star & -I_N\otimes Q(\tau)
    \end{bmatrix} \preceq- \alpha I,\ \tau\in[0,\Tmax]
\end{equation}
and
\begin{equation}
\begin{bmatrix}
    -Q(\theta) & (\bar{J}_0Q(\bar T)+BU(\theta))^{\T}\\
  \star & -Q(0)
\end{bmatrix}\preceq- \alpha I,\ \theta\in[\Tmin,\Tmax]
  \end{equation}
  hold  for some $\alpha>0$.where
  \begin{equation}
  \bar{J}_0:=\begin{bmatrix}
    I & 0\\
    0 & 0
  \end{bmatrix},\ \bar B:=\begin{bmatrix}
    0\\
    I
  \end{bmatrix}.
\end{equation}
  Moreover, suitable gains are given by $\begin{bmatrix} K_1(T_k) & K_2(T_k)\end{bmatrix}=UQ(T_k)^{-1}$.
  \end{enumerate}
\end{corollary}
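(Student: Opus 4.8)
The plan is to derive this corollary as the LTI, range-dwell-time specialization of Theorem~\ref{cor:LTV:SD}, using a clock-dependent (timer-based) Lyapunov matrix. Once the sampled-data system \eqref{eq:syst:Impulsive_SD1}--\eqref{eq:syst:Impulsive_SD2} is recast as the impulsive system appearing in the proof of Theorem~\ref{cor:LTV:SD}, the flow generator and the jump operator are time-invariant; the only free quantity is the inter-sample length $T_k=t_{k+1}-t_k$, which now ranges over $[\Tmin,\Tmax]$. The natural certificate is therefore a matrix-valued function depending on time only through the local clock $\tau:=t-t_k\in(0,T_k]$, i.e. one sets $Q(t_k+\tau):=Q(\tau)$ and resets the clock at each sampling instant. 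This is precisely the device that collapsed the periodic case into a two-point boundary condition, the only change being that $T_k$ is no longer fixed.

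For the implication (b)$\Rightarrow$(a), I would feed this clock-dependent $Q$ into the conditions of Theorem~\ref{cor:LTV:SD}, statement~(c), evaluated along an arbitrary admissible sampling sequence. The flow inequality of the corollary is imposed for all $\tau\in[0,\Tmax]$, which dominates every realizable inter-sample duration since $T_k\le\Tmax$; hence the required continuous-time decrease holds on each interval. The jump inequality is imposed for all $\theta\in[\Tmin,\Tmax]$, and evaluating it at the realized dwell-time $\theta=T_k$ relates the value of $Q$ just before the sample to its value $Q(0)$ just after, which yields the strict decrease at the jump. An appeal to Theorem~\ref{cor:LTV:SD} (and through it Theorem~\ref{th:Main:Imp}) then gives uniform mean-square exponential stability, while the change of variables $U=KQ$ together with a Schur complement recovers the stated gain formula.

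The converse (a)$\Rightarrow$(b) is where the real work lies. I would start from the converse Lyapunov construction already used in Theorem~\ref{th:Main:Imp}, where a certifying matrix $\bar P(t)$ is produced from the integral-plus-sum expression~\eqref{eq:explicitP(t)}. The crucial claim to establish is that, in the LTI range-dwell-time setting, this $\bar P$ may be taken to depend on $t$ only through the local clock $\tau$, so that $Q(\tau)=\bar P(t_k+\tau)^{-1}$ defines a single function valid on every interval; morally, taking the worst case over all admissible future sampling sequences is what turns a sequence-dependent certificate into a clock-dependent one. The hard part will be the non-conservatism argument, namely that enforcing the flow inequality only up to $\Tmax$ and the jump inequality over the whole range $[\Tmin,\Tmax]$ loses nothing relative to genuine stability. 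Finally, I would dispose of the mild regularity issue --- the clock-dependent $Q$ may fail to be differentiable at the endpoints --- exactly as in Corollary~\ref{cor:switched:minDT}, by invoking the smoothing result of \citep{Holicki:19}, which ensures that the existence of a piecewise-differentiable certificate implies the existence of a genuinely differentiable one.
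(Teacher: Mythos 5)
Your (b)$\Rightarrow$(a) direction is exactly the paper's argument: the entire published proof is one line, ``choose $P(t_k+\tau)=P(\tau)$, $\tau\in(0,\Tmax]$, in the conditions of Theorem~\ref{cor:LTV:SD}'' --- i.e.\ the clock-dependent certificate, with the flow inequality imposed up to $\Tmax$ (dominating every realizable $T_k$) and the jump inequality evaluated at the realized dwell-time $\theta=T_k\in[\Tmin,\Tmax]$, then the congruence/Schur-complement step to recover the gains. (Incidentally, the paper's proof even recycles the phrase ``due to periodicity'' from the periodic corollary; the correct justification in the aperiodic case is the one you give, namely that quantifying $\theta$ over $[\Tmin,\Tmax]$ covers all admissible inter-sample times.)

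Where you diverge is the converse, and here you are more honest than the paper but also not done. The paper simply asserts the equivalence and offers no construction of a clock-dependent certificate from stability; your plan --- take the sequence-dependent $\bar P(t)$ from \eqref{eq:explicitP(t)} and pass to a worst case over admissible future sampling sequences --- is the right intuition, but as stated it has a genuine gap. The supremum over future sequences of the functionals $\langle\bar P(t),X\rangle$ is at best locally Lipschitz in the clock variable, it need not satisfy the differential \emph{equality} or even the stated inequality pointwise (one only gets a dissipation inequality for upper Dini derivatives, and verifying it for the envelope requires an argument you have not supplied), and the smoothing result of \citep{Holicki:19} that you invoke is tailored to removing a single kink at $\tau=\bar T$ in minimum/constant dwell-time certificates --- it does not smooth a general sup-envelope into a differentiable $Q$ satisfying strict inequalities on all of $[0,\Tmax]$. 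So your necessity sketch identifies the hard step correctly (``non-conservatism of the clock-dependent class'') but does not close it; since the paper itself silently skips this direction, your proposal matches the paper on everything the paper actually proves, and the remaining gap is a gap in both.
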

\begin{proof}
  Due to periodicity of the impulse times period, we can choose $P(t_k+\tau)=P(\tau)$, $\tau\in(0,\Tmax]$, in the conditions of Theorem \ref{cor:LTV:SD}, which proves the result.
\end{proof}}

\subsubsection{A class of stochastic LTV impulsive systems with stochastic impulses and switching}

We consider here the following class of systems that extends the systems analyzed in \citep{Souza:21} to the stochastic and time-varying case:
\begin{equation}\label{eq:souza:1}
  \begin{array}{rcl}
    \dx(t)&=&A_{0,\sigma(t)}x(t)\dt+\sum_{i=1}^{N}A_{i,\sigma(t)}(t)x(t)\d W_i(t),\ \sigma(t)\in\{1,\ldots,M_c\},\\
    x(t_k^+)&=&J_{0,\sigma(t_k)}(k)x(t_k)+\sum_{i=1}^{N}J_{i,\sigma(t_k)}(k)\nu_i(k)x(t_k),\ \sigma(t_k)\in\{M_c+1,\ldots,M_c+M_d\}\\
    x(t^0)&=&x_0\\
    \sigma(t^0)&=&\sigma_0
  \end{array}
\end{equation}
where $M_c$ is the number of continuous modes and $M_d$ is the number of discrete modes. As for the other systems $x,x_0\in\mathbb{R}^n$ are the state of the system and the initial condition, and $t^0\ge0$ is the initial time. The Wiener processes $W_1(t),\ldots,W_{N}(t)$ are assumed to be independent of each other,  of the state $x(t)$, and the switching signal $\sigma(t)$ of the system. Similarly, the random sequences $\nu_1(k),\ldots,\nu_N(k)$ are independent of each other, of $x(t_k)$, and the switching signal $\sigma(t_k)$. The matrix-valued functions describing the system are also assumed to be uniformly bounded.  The switching signal $\sigma:\mathbb{R}_{\ge0}\mapsto\{1,\ldots,M\}$, with initial value $\sigma_0$, is assumed to be stochastically varying and piecewise constant. We also have that for $i=1,\ldots,M_c$:
\begin{equation}\label{eq:souza:2}
  p_{ij}(h)=\mathbb{P}(\sigma(t+h)=j|\sigma(t)=i)=\left\{\begin{array}{rcl}
    \lambda_{ij}h+o(h),&&i\ne j\\
    1+\lambda_{ii}h+o(h),&&i=j
  \end{array}\right.
\end{equation}
where $\Lambda=[\lambda_{ij}]\in\mathbb{R}^{M_c\times(M_c+M_d)}$, $\lambda_{ij}\ge0$ for all $i\ne j$ and $\textstyle\sum_{j=1}^{M_c+M_d}\lambda_{ij}=0$ for all $i$. For $i=M_c+1,\ldots,M_c+M_d$, we have that
\begin{equation}\label{eq:souza:3}
  p_{ij}=\mathbb{P}(\sigma(t^+)=j|\sigma(t)=i)=\pi_{ij}
\end{equation}
where $\Pi=[\pi_{ij}]\in\mathbb{R}^{M_d\times(M_c+M_d)}$, $\pi_{ij}\ge0$ for all $i,j$, $\textstyle\sum_{j=1}^{M_c+M_d}\pi_{ij}=1$ for all $i$, and $\pi_{ij}=0$ for all $i,j\in\{M_c+1,\ldots,M_c+M_d\}$ (i.e. no transition between the discrete modes). We then have the following result that can be seen as an extension or a generalization of  \citep[Theorem 3.2]{Souza:21} to the stochastic and time-varying case:
\begin{theorem}\label{th:souza}
The following statements are equivalent:
\begin{enumerate}[(a)]
  \item The system \eqref{eq:souza:1}, \eqref{eq:souza:2}, \eqref{eq:souza:3} is uniformly exponentially mean-square stable.
  \item There exist differentiable matrix-valued functions $P_i:\mathbb{R}_{\ge0}\mapsto\pd^n$, $\alpha_1I\preceq P_i(\cdot)\preceq\alpha_2I$, $i=1,\ldots,M_c+M_d$, such that
    \begin{equation}
         \dot{P}_i(t)+A_{0,i }(t)^{\T}P_i (\tau)+P_i (t)A_{0,i }(t)+\sum_{j=1}^NA_{i,j}(t)^{\T}P_i(t)A_{i,j}(t)+\sum_{j=1}^{M_c+M_d}\lambda_{ij}P_j(t)\preceq-\alpha_3I
    \end{equation}
    holds for all $i =1,\ldots,M_c$, $t\ne t_k$, and
        \begin{equation}
        J_\ell (k)^{\T}\left(\sum_{j=1}^{M_c}\pi_{\ell j}P_j(t_k^+)\right)J_\ell (k)-P_\ell (t_k)\preceq-\alpha_4I,
    \end{equation}
  holds for all $\ell =M_c+1,\ldots,M_c+M_d$, $t_k\ge t^0$ and for some $\alpha_1,\alpha_2,\alpha_3,\alpha_4>0$.
\end{enumerate}
\end{theorem}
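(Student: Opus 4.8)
The plan is to follow the same route used for the Markov jump examples (Theorem \ref{th:Jump:CT} and Theorem \ref{th:Markov:DT}) and reduce the mean-square stability of \eqref{eq:souza:1}--\eqref{eq:souza:3} to the uniform exponential stability of a block-diagonal matrix-valued impulsive system of the form \eqref{eq:matdiffeqHybridLTV}, to which Theorem \ref{th:Main:Imp} can then be applied. To this end, I would introduce the partial second-order moment matrices $X_i(t):=\E[x(t)x(t)^{\T}\mathds{1}_i(\sigma(t))]$, $i=1,\ldots,M_c+M_d$, where $\mathds{1}_i(\sigma(t))$ is the indicator of the event $\sigma(t)=i$. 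Since $\E[||x(t)||_2^2]=\trace\big(\sum_i X_i(t)\big)$ and each $X_i(t)\succeq0$, the system is uniformly exponentially mean-square stable if and only if the collection $\{X_i(t)\}$ converges exponentially to zero; this already supplies the measurement-theoretic part of both implications.

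Next I would derive the deterministic dynamics of the $X_i$. On flowing intervals, combining It\^o's formula for the diffusion part with the forward Kolmogorov equation associated with \eqref{eq:souza:2} yields, for each continuous mode $i\le M_c$, a matrix differential equation of the form
\[
  \dot{X}_i(t)=A_{0,i}(t)X_i(t)+X_i(t)A_{0,i}(t)^{\T}+\sum_{j=1}^N A_{i,j}(t)X_i(t)A_{i,j}(t)^{\T}+\sum_{j=1}^{M_c}\lambda_{ji}X_j(t),
\]
the last sum encoding the continuous-mode switching exactly as in \eqref{eq:dksalkd;lsakd;lksa;dksa;ldka;skd;}. The passage of the chain through a discrete mode $\ell>M_c$, governed by \eqref{eq:souza:3}, acts as a state jump through the maps $X\mapsto J_{\cdot,\ell}(k)XJ_{\cdot,\ell}(k)^{\T}$ followed by an instantaneous redistribution to the continuous modes via the row $\pi_{\ell\cdot}$ of $\Pi$ (recalling $\pi_{\ell j}=0$ for $j>M_c$); collecting these contributions produces the jump relations for the discrete-mode blocks. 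Stacking all blocks into $X(t)=\diag_{i}(X_i(t))$ then exhibits the moment process as a matrix-valued impulsive system \eqref{eq:matdiffeqHybridLTV}, whose flow generator $\mathcal{C}_t$ carries the continuous-mode dynamics together with the $\Lambda$-coupling and whose jump generator $\mathcal{D}_k$ carries the $J$-maps together with the $\Pi$-coupling.

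With this reformulation in hand, I would invoke Theorem \ref{th:Main:Imp}, which is itself an equivalence, so both directions (a)$\Leftrightarrow$(b) are obtained at once. The block-diagonal structure of the state means the process in fact evolves on $\psd^n\times\cdots\times\psd^n$ rather than on the full cone $\mathbb{S}^{n(M_c+M_d)}_{\succeq0}$, so, exactly as in the proof of Theorem \ref{th:Jump:CT}, the search for a certificate may be restricted without conservatism to block-diagonal functions $P(t)=\diag_{i}(P_i(t))$. Passing to the adjoint generators $\mathcal{C}_t^*$ and $\mathcal{D}_k^*$ and expanding the block-diagonal products then decouples the flow inequality of Theorem \ref{th:Main:Imp} into the $M_c$ continuous-mode conditions and the jump inequality into the $M_d$ discrete-mode conditions stated in (b), with the uniform bounds $\alpha_1 I\preceq P_i(\cdot)\preceq\alpha_2I$ inherited from those of $P$.

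The main obstacle I anticipate is the faithful bookkeeping in the derivation of the moment dynamics, specifically the treatment of the discrete modes: one must correctly account for the flux entering a discrete mode from a continuous mode, the action of the jump maps $J_{\cdot,\ell}$, and the immediate $\Pi$-redistribution back to the continuous modes, and then verify that these combine to give exactly the flow generator $\mathcal{C}_t$ and jump generator $\mathcal{D}_k$ underlying conditions (b). The remaining steps — the mean-square/moment equivalence, the application of Theorem \ref{th:Main:Imp}, and the block-diagonal reduction — are routine once the correct generators have been identified.
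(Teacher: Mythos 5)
Your proposal takes essentially the same approach as the paper: the paper in fact omits the detailed proof of this theorem, stating only that it is a combination of the proofs of Theorem \ref{th:generalImpulsiveSystems}, Theorem \ref{th:Jump:CT}, and Theorem \ref{th:Markov:DT}, which is precisely your plan of indicator-based partial second-order moments $X_i(t)=\E[x(t)x(t)^{\T}\mathds{1}_i(\sigma(t))]$, stacking into a block-diagonal matrix-valued impulsive system of the form \eqref{eq:matdiffeqHybridLTV}, invoking the equivalence of Theorem \ref{th:Main:Imp}, and restricting without conservatism to block-diagonal certificates $P(t)=\diag_i(P_i(t))$. Your identification of the jump bookkeeping (the $J$-maps composed with the instantaneous $\Pi$-redistribution, and the $\Lambda$-coupling in the flow) as the only delicate step is consistent with the paper's sketch, so there is no discrepancy to report.
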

\begin{proof}
The complete proof is omitted but can be seen as a combination of the proofs of Theorem \ref{th:generalImpulsiveSystems}, Theorem \ref{th:Jump:CT}, and Theorem \ref{th:Markov:DT}.
\end{proof}

\section{Concluding statements and future works}

\blue{A unified formulation for the analysis of a broad class of linear systems in terms of the analysis of matrix-valued differential equations has been introduced. The approach is shown to generalize and unify most of the results on the literature on linear systems including continuous-time, discrete-time, impulsive, switched and sampled-data systems, and certain systems with delays. The natural next step is the consideration of systems with inputs and the derivation of a dissipativity theory for such systems \citep{Willems:71,Brogliato:07,vanderSchaft:00}. This would lead, for instance, to the derivation of necessary and sufficient conditions characterizing their passivity, $L_2$-gain, etc.. The generalization of the approach to nonlinear matrix-valued monotone systems \citep{Angeli:03} would also be of great interest. Possible extensions on the stochastic side would be the consideration of Levy processes \citep{Applebaum:09} and of more general renewal processes along the same lines as \citep{Antunes:09,Antunes:09b,Antunes:10,Antunes:13,Lampersky:14}. While only the stabilization with respect to static state feedback has been considered here, the case of dynamic output feedback is still of great interest; see e.g. \cite{Scherer:90a,Scherer:97a,Geromel:19}. The design of observers for matrix-valued dynamical systems is also of great interest.}


\begin{thebibliography}{10}

\bibitem{Abraham:88}
R.~Abraham, J.~E. Marsden, and T.~Ratiu.
\newblock {\em Manifolds, Tensor Analysis, and Applications}.
\newblock Springer Science+Business Media New York, 1988.

\bibitem{AitRami:09}
M.~{Ait Rami}.
\newblock Stability analysis and synthesis for linear positive systems with
  time-varying delays.
\newblock In {\em Positive systems - Proceedings of the 3rd {M}ultidisciplinary
  {I}nternational {S}ymposium on {P}ositive {S}ystems: {T}heory and
  {A}pplications ({POSTA} 2009)}, pages 205--216. Springer-Verlag Berlin
  Heidelberg, 2009.

\bibitem{Aitrami:07}
M.~{Ait Rami} and F.~Tadeo.
\newblock Controller synthesis for positive linear systems with bounded
  controls.
\newblock {\em {IEEE} Transactions on Circuits and Systems -- II. Express
  Briefs}, 54(2):151--155, 2007.

\bibitem{Aleksandrov:14}
A.~Y. Aleksandrov and O.~Mason.
\newblock Diagonal {L}yapunov–{K}rasovskii functionals for discrete-time
  positive systems with delay.
\newblock {\em Systems \& Control Letters}, 63:63--67, 2014.

\bibitem{Allerhand:11}
L.~I. Allerhand and U.~Shaked.
\newblock Robust stability and stabilization of linear switched systems with
  dwell time.
\newblock {\em {IEEE} Transactions on Automatic Control}, 56(2):381--386, 2011.

\bibitem{Angeli:03}
D.~Angeli and E.~D. Sontag.
\newblock Monotone control systems.
\newblock {\em IEEE Transactions on Automatic Control}, 48(10):1684--1698,
  2003.

\bibitem{Antunes:09b}
D.~Antunes, J.~P. Hespanha, and C.~Silvestre.
\newblock Control of impulsive renewal systems: Application to direct design in
  networked control.
\newblock In {\em Joint 48th {IEEE} Conference on Decision and Control and 28th
  Chinese Control Conference}, pages 6882--6887, 2009.

\bibitem{Antunes:09}
D.~Antunes, J.~P. Hespanha, and C.~Silvestre.
\newblock Stability of impulsive systems driven by renewal processes.
\newblock In {\em 28th American Control Conference}, pages 4032--4037, St
  Louis, MO, USA, 2009.

\bibitem{Antunes:10}
D.~Antunes, J.~P. Hespanha, and C.~Silvestre.
\newblock Stochastic hybrid systems with renewal transitions.
\newblock In {\em 29th American Control Conference}, pages 3124--3129,
  Baltimore, MD, USA, 2010.

\bibitem{Antunes:13}
D.~Antunes, J.~P. Hespanha, and C.~Silvestre.
\newblock Volterra integral approach to impulsive renewal systems: Application
  to networked control.
\newblock {\em IEEE Transactions on Automatic Control}, 57(3):607--619, 2013.

\bibitem{Applebaum:09}
D.~Applebaum.
\newblock {\em L\'evy Processes and Stochastic Calculus}.
\newblock Cambridge University Press, 2009.

\bibitem{Bhatia:97}
R.~Bhatia.
\newblock {\em Matrix Analysis}.
\newblock Springer, 1997.

\bibitem{Briat:11g}
C.~Briat.
\newblock Robust stability analysis of uncertain linear positive systems via
  integral linear constraints - ${L_1}$- and ${L_\infty}$-gains
  characterizations.
\newblock In {\em 50th {IEEE} Conference on Decision and Control}, pages
  3122--3129, Orlando, Florida, USA, 2011.

\bibitem{Briat:13d}
C.~Briat.
\newblock Convex conditions for robust stability analysis and stabilization of
  linear aperiodic impulsive and sampled-data systems under dwell-time
  constraints.
\newblock {\em Automatica}, 49(11):3449--3457, 2013.

\bibitem{Briat:11h}
C.~Briat.
\newblock Robust stability and stabilization of uncertain linear positive
  systems via integral linear constraints - ${L_1}$- and ${L_\infty}$-gains
  characterizations.
\newblock {\em {I}nternational {J}ournal of {R}obust and {N}onlinear
  {C}ontrol}, 23(17):1932--1954, 2013.

\bibitem{Briat:14f}
C.~Briat.
\newblock Convex conditions for robust stabilization of uncertain switched
  systems with guaranteed minimum and mode-dependent dwell-time.
\newblock {\em Systems \& Control Letters}, 78:63--72, 2015.

\bibitem{Briat:15i}
C.~Briat.
\newblock Stability analysis and stabilization of stochastic linear impulsive,
  switched and sampled-data systems under dwell-time constraints.
\newblock {\em Automatica}, 74:279--287, 2016.

\bibitem{Briat:16b}
C.~Briat.
\newblock Stability and performance analysis of linear positive systems with
  delays using input-output methods.
\newblock {\em International Journal of Control}, 91(7):1669--1692, 2018.

\bibitem{Briat:19:IJC}
C.~Briat.
\newblock Stability and ${L_1\times\ell_1}$-to-${L_1\times\ell_1}$ performance
  analysis of uncertain impulsive linear positive systems with applications to
  the interval observation of impulsive and switched systems with constant
  delays.
\newblock {\em International Journal of Control}, 93(11):2634--265, 2020.

\bibitem{Briat:19:Linf}
C.~Briat.
\newblock Hybrid ${L}_\infty\times\ell_\infty$-performance analysis and control
  of linear time-varying impulsive and switched positive systems.
\newblock {\em Nonlinear Analysis: Hybrid Systems}, 39:100980, 2021.

\bibitem{Briat:15e}
C.~Briat, A.~Gupta, and M.~Khammash.
\newblock Antithetic integral feedback ensures robust perfect adaptation in
  noisy biomolecular networks.
\newblock {\em Cell Systems}, 2(1):15--26, 2016.

\bibitem{Briat:19:DelayedRN}
C.~Briat and M.~Khammash.
\newblock Ergodicity analysis and antithetic integral control of a class of
  stochastic reaction networks with delays.
\newblock {\em SIAM Journal on Applied Dynamical Systems}, 19(3):1575--1608,
  2020.

\bibitem{Briat:20:Structural}
C.~Briat and M.~Khammash.
\newblock Ergodicity, output-controllability, and antithetic integral control
  of uncertain stochastic reaction networks.
\newblock {\em IEEE Transactions on Automatic Control}, 66(5):2087--2098, 2020.

\bibitem{Brogliato:07}
B.~Brogliato, R.~Lozano, B.~Maschke, and O.~Egeland.
\newblock {\em Dissipative Systems Analysis and Control}.
\newblock Springer-Verlag, 2007.

\bibitem{Bullo:05}
F.~Bullo and A.~D. Lewis.
\newblock {\em Geometric Control of Mechanical Systems}.
\newblock Springer Science+Business Media New York, 2005.

\bibitem{Bundfuss:09}
S.~Bundfuss and M.~D\"ur.
\newblock Copositive {L}yaunov functions for switched systems over cones.
\newblock {\em Systems \& Control Letters}, 58(5):342--345, 2009.

\bibitem{Chen:18}
Y.~Chen, P.~Bolzern, P.~Colaneri, Y.~Bo, and B.~Du.
\newblock Stability and stabilization for markov jump linear systems in
  polyhedral cones.
\newblock In {\em 57th IEEE Conference on Decision and Control}, pages
  4779--4784, Miami Beach, USA, 2018.

\bibitem{Costa:05}
O.~L.~V. Costa, M.~D. Fragoso, and R.~P. Marques.
\newblock {\em Discrete-Time Markov Jump Linear Systems}.
\newblock Springer-Verlag, London, UK, 2005.

\bibitem{Costa:13}
O.~L.~V. Costa, M.~D. Fragoso, and M.~G. Todorov.
\newblock {\em Continuous-Time Markov Jump Linear Systems}.
\newblock Springer-Verlag, Berlin, Heidelberg, 2013.

\bibitem{Dragan:20}
V.~Dr\u{a}gan and S.~Aberkane.
\newblock Exact detectability and exact observability of discrete-time linear
  stochastic systems with periodic coefficients.
\newblock {\em Automatica}, 112:108660, 2020.

\bibitem{Dragan:05}
V.~Dr\u{a}gan, T.~Damm, G.~Freiling, and T.~Morozan.
\newblock Differential equations with positive evolutions and some
  applications.
\newblock {\em Results in Mathematics}, 48:206--236, 2005.

\bibitem{Dragan:21}
V.~Dr\u{a}gan, E.~{Fontoura Costa}, I.-L. Popa, and S.~Aberkane.
\newblock Exact detectability: Application to generalized lyapunov and riccati
  equations.
\newblock {\em Systems \& Control Letters}, 157:105032, 2021.

\bibitem{Dragan:22}
V.~Dr\u{a}gan, E.~{Fontoura Costa}, I.-L. Popa, and S.~Aberkane.
\newblock Exact detectability of discrete-time and continuous-time linear
  stochastic systems: A uniﬁed approach (in press).
\newblock {\em IEEE Transactions on Automatic Control}, 2022.

\bibitem{Dragan:04}
V.~Dr\u{a}gan, G.~Freiling, A.~Hochhaus, and T.~Morozan.
\newblock A class of nonlinear differential equations on the space of symmetric
  matrices.
\newblock {\em Electronic Journal of Differential Equations}, 96:1--48, 2004.

\bibitem{Dragan:02}
V.~Dr\u{a}gan and T.~Morozan.
\newblock Stability and robust stabilization to linear stochastic systems
  described by differential equations with markovian jumping and multiplicative
  white noise.
\newblock {\em Stochastic Analysis and Applications}, 20(1):33--92, 2002.

\bibitem{Dragan:06}
V.~Dr\u{a}gan and T.~Morozan.
\newblock Mean square exponential stability for some stochastic linear discrete
  time systems.
\newblock {\em European Journal of Control}, 12(4):373--396, 2006.

\bibitem{Dragan:10}
V.~Dr\u{a}gan, T.~Morozan, and A.~M. Stoica.
\newblock {\em Mathematical Methods in Robust Control of Discrete-time Linear
  Stochastic Systems}.
\newblock Springer New York, Dordrecht, Heidelberg, London, 2010.

\bibitem{Dragan:13}
V.~Dr\u{a}gan, T.~Morozan, and A.~M. Stoica.
\newblock {\em Mathematical Methods in Robust Control of Linear Stochastic
  Systems}.
\newblock Springer New York, 2013.

\bibitem{Ebihara:11}
Y.~Ebihara, D.~Peaucelle, and D.~Arzelier.
\newblock ${L_1}$ gain analysis of linear positive systems and its
  applications.
\newblock In {\em 50th Conference on Decision and Control, Orlando, Florida,
  USA}, pages 4029--4034, 2011.

\bibitem{Farina:00}
L.~Farina and S.~Rinaldi.
\newblock {\em Positive Linear Systems: Theory and Applications}.
\newblock John Wiley \& Sons, 2000.

\bibitem{Fridman:14}
E.~Fridman.
\newblock {\em Introduction to Time-Delay Systems}.
\newblock Birkh{\"a}user, Springer International Publishing Basel Switzerland,
  2014.

\bibitem{Gahinet:94a}
P.~Gahinet and P.~Apkarian.
\newblock A linear matrix inequality approach to $\mathcal{H}_{\infty}$
  control.
\newblock {\em International Journal of Robust and Nonlinear Control},
  4:421--448, 1994.

\bibitem{Gattami:16}
A.~Gattami and B.~Bamieh.
\newblock Simple covariance approach to $\mathcal{H}_\infty$ analysis.
\newblock {\em IEEE Transactions on Automatic Control}, 61(3):789--794, 2016.

\bibitem{Geromel:19}
J.~C. Geromel, P.~Colaneri, and P.~Bolzern.
\newblock Differential linear matrix inequality in optimal sampled-data
  control.
\newblock {\em Automatica}, 100:289--298, 2019.

\bibitem{Goebel:12}
R.~Goebel, R.~G. Sanfelice, and A.~R. Teel.
\newblock {\em Hybrid Dynamical Systems. Modeling, Stability, and Robustness}.
\newblock Princeton University Press, 2012.

\bibitem{GuKC:03}
K.~Gu, V.~L. Kharitonov, and J.~Chen.
\newblock {\em Stability of Time-Delay Systems}.
\newblock Birkh{\"a}user, Boston, 2003.

\bibitem{Briat:13i}
A.~Gupta, C.~Briat, and M.~Khammash.
\newblock A scalable computational framework for establishing long-term
  behavior of stochastic reaction networks.
\newblock {\em PLOS Computational Biology}, 10(6):e1003669, 2014.

\bibitem{Haddad:04}
W.~M. Haddad and V.~Chellaboina.
\newblock Stability theory for nonnegative and compartmental dynamical systems
  with time delay.
\newblock {\em Systems \& Control Letters}, 51(5):355--361, 2004.

\bibitem{Holicki:19}
T.~Holicki and C.~W. Scherer.
\newblock Stability analysis and output-feedback synthesis of hybrid systems
  affected by piecewise constant parameters via dynamic resetting scalings.
\newblock {\em Nonlinear Analysis: Hybrid Systems}, 34:179--208, 2019.

\bibitem{Ji:90}
Y~Ji and H.~J Chizeck.
\newblock Controllability, stabilizability, and continuous-time markovian jump
  linear quadratic control.
\newblock {\em IEEE Transactions on Automatic Control}, 777-788, 1990.

\bibitem{Jurdjevic:72}
V.~Jurdjevic and H.~J. Sussmann.
\newblock Control systems on {L}ie groups.
\newblock {\em Systems \& Control Letters}, 12(2):313--329, 1972.

\bibitem{Kaczorek:09}
T.~Kaczorek.
\newblock Stability of positive continuous-time linear systems with delays.
\newblock {\em Bulletin of the {P}olish {A}cademy of {S}ciences - Technical
  sciences}, 57(4):395--398, 2009.

\bibitem{Khasminskii:12}
R.~Khasminskii.
\newblock {\em Stability of Stochastic Differential Equations}.
\newblock Springer, 2012.

\bibitem{Khong:16}
S.~Z. Khong and A.~Rantzer.
\newblock Diagonal {L}yaunov functions for positive linear time-varying
  systems.
\newblock In {\em 55th IEEE Conference on Decision and Control}, pages
  5269--5274, Las Vegas, USA, 2016.

\bibitem{Kushner:67}
H.~Kushner.
\newblock {\em Stochastic Stability and Control}.
\newblock Academic Press, New York, 1967.

\bibitem{Lampersky:14}
A.~Lampersky and A.~Papachristodoulou.
\newblock Stability and consensus for multi-agent systems with {P}oisson clock
  noise.
\newblock In {\em 53rd IEEE Conference on Decision and Control}, pages
  3023--3028, Los Angeles, USA, 2014.

\bibitem{Michel:08}
A.~N. Michel, L.~Hou, and D.~Liu.
\newblock {\em Stability of dynamical systems - Continuous, discontinuous and
  discrete systems}.
\newblock Birkh{\"{a}}user, Boston, 2008.

\bibitem{Murray:94}
R.~M. Murray, Z.~Li, and S.~S. Satry.
\newblock {\em A Mathematical Introduction to Robotic Mnipulation}.
\newblock CRC Press, 1994.

\bibitem{Qian:17a}
R.~Qian, Z.~Duan, and Y.~Qi.
\newblock Stability of power control in multiple coexisting wireless networks:
  An $\mathscr{L}_2$ small-gain perspective.
\newblock {\em IEEE Transactions on Circuits and Systems - I: Regular Papers},
  64(5):1235--1246, 1017.

\bibitem{Qian:17b}
R.~Qian, Z.~Duan, and Y.~Qi.
\newblock Power control in multiple coexisting wireless networks: $l_1$ and
  $l_\infty$ gain based stability analysis.
\newblock In {\em 36th Chinese Control Conference}, pages 3218--3223, Dalian,
  China, 2017.

\bibitem{Rantzer:12}
A.~Rantzer.
\newblock Optimizing positively dominated systems.
\newblock In {\em 51st {IEEE} Conference on Decision and Control}, pages
  271--277, Maui, Hawaii, USA, 2012.

\bibitem{Rantzer:16}
A.~Rantzer.
\newblock On the {K}alman-{Y}akubovich-{P}opov lemma for positive systems.
\newblock {\em IEEE Transactions on Automatic Control}, 61(5):1346--1349, 2016.

\bibitem{Rantzer:21}
A.~Rantzer and M.~E. Valcher.
\newblock Scalable control of positive systems.
\newblock {\em Annual Review of Control, Robotics, and Autonomous Systems},
  4:8.1--8.23, 2021.

\bibitem{Scherer:90a}
C.~W. Scherer.
\newblock {\em The {R}iccati inequality and state-space $H_\infty$-optimal
  control}.
\newblock PhD thesis, University of W{\"{u}}rzburg, Germany, 1990.

\bibitem{Scherer:97a}
C.~W. Scherer, P.~Gahinet, and M.~Chilali.
\newblock Multiobjective output-feedback control via {LMI} optimization.
\newblock {\em IEEE Transaction on Automatic Control}, 42(7):896--911, 1997.

\bibitem{Shaked:14}
U.~Shaked and E.~Gershon.
\newblock Robust ${H}_\infty$ control of stochastic linear switched systems
  with dwell-time.
\newblock {\em International Journal of Robust and Nonlinear Control},
  24:1664--1676, 2014.

\bibitem{Shen:14}
J.~Shen and J.~Lam.
\newblock ${L_\infty}$-gain analysis for positive systems with distributed
  delays.
\newblock {\em Automatica}, 50:175--179, 2014.

\bibitem{Shen:15}
J.~Shen and J.~Lam.
\newblock $\ell_\infty$/${L}_\infty$-gain analysis for positive linear systems
  with unbounded time-varying delays.
\newblock {\em IEEE Transactions on Automatic Control}, 60(3):857--862, 2015.

\bibitem{Shen:15b}
J.~Shen and W.~X. Zheng.
\newblock Positivity and stability of coupled differential–difference
  equations with time-varying delays.
\newblock {\em Automatica}, 57:123--127, 2015.

\bibitem{SkeltonIG:97a}
R.~E. Skelton, T.~Iwasaki, and K.~M. Grigoriadis.
\newblock {\em A Unified Algebraic Approach to Linear Control Design}.
\newblock Taylor \& Francis, 1997.

\bibitem{Souza:21}
M.~Souza, A.~R. Fioravanti, and V.~S.~P. Araujo.
\newblock Impulsive {M}arkov jump linear systems: Stability analysis and
  $\mathbb{H}_2$ control.
\newblock {\em Nonlinear Analysis: Hybrid Systems}, 42:101089, 2021.

\bibitem{Tanaka:13b}
T.~Tanaka, C.~Langbort, and V.~Ugrinovskii.
\newblock {DC}-dominant property of cone-preserving transfer functions.
\newblock {\em Systems \& Control Letters}, 62(8):699--707, 2013.

\bibitem{Teel:14}
A.~R. Teel, A.~Subbaraman, and A.~Sferlazza.
\newblock Stability analysis for stochastic hybrid systems: {A} survey.
\newblock {\em Automatica}, 50(10):2435--2456, 2014.

\bibitem{vanderSchaft:00}
A.~{van der Schaft}.
\newblock {\em $L_2$-Gain and Passivity Techniques in Nonlinear Control}.
\newblock Springer-Verlag, London, 2000.

\bibitem{Verriest:09c}
E.~I. Verriest and W.~Michiels.
\newblock Stability analysis of systems with stochastically varying delays.
\newblock {\em Systems \& Control Letters}, 58(10-11):783--791, 2009.

\bibitem{Willems:71}
J.~C. Willems.
\newblock {\em The analysis of feedback systems}.
\newblock {MIT} Press, Cambridge, MA, USA, 1971.

\bibitem{You:15}
S.~You, A.~Gattami, and J.~C. Doyle.
\newblock Primal robustness and semidefinite cones.
\newblock In {\em 54th IEEE Conference on Decision and Control}, pages
  6227--6232, Osaka, Japan, 2015.

\end{thebibliography}

\end{document}